\documentclass[12pt,twoside,a4paper]{amsart}
\usepackage{kvsetkeys}
\usepackage[utf8]{inputenc}
\usepackage{amssymb}
\usepackage{amsmath}
\usepackage{graphicx}
\usepackage{amsfonts}
\usepackage{mathrsfs}
\usepackage{mathptmx}
\usepackage{yhmath}
\usepackage{yfonts}
\usepackage{array}
\usepackage{makecell}
\usepackage[margin=1in]{geometry}
\usepackage{amsmath, amscd, accents}
\usepackage{stmaryrd}
\newcommand{\ol }{\overline }

\newcommand{\floor }[1]{\left\lfloor #1 \right\rfloor }
\newcommand{\ceiling }[1]{\left\lceil #1 \right\rceil }
\newcommand{\abs}[1]{\left\lvert #1 \right\rvert}

\newcommand{\ZZ }{\mathbb{Z}}

\newcommand{\CC }{\mathbb{C}}

\newcommand{\PP }{\mathbb{P}}
\newcommand{\OO }{\mathcal{O}}

\newcommand{\eps }{\varepsilon }
\newcommand{\crond }{\mathcal{C}}

\newcommand{\frond }{\mathcal{F}}

\newcommand{\mrond }{\mathcal{M}}
\newcommand{\grond }{\mathcal{G}}
\newcommand{\hrond }{\mathcal{H}}
\newcommand{\irond }{\mathcal{I}}
\newcommand{\lrond }{\mathcal{L}}

\newcommand{\srond }{\mathcal{S}}
\newcommand{\erond }{\mathcal{E}}

\newcommand{\lthen }{\Rightarrow }
\newcommand{\injto }{\hookrightarrow }

\newcommand{\Pic }{\textnormal{Pic}}

\newcommand{\Sym }{\textnormal{Sym}}
\newcommand{\ord }{\textnormal{ord}}
\newcommand{\gon }{\textnormal{gon}}
\newcommand{\Cliff }{\textnormal{Cliff}}
\newcommand{\rk }{\textnormal{rk}}

\PassOptionsToPackage{usenames,svgnames,dvipsnames,table}{xcolor}	
\usepackage{xcolor}
\usepackage[colorlinks=true]{hyperref}
\hypersetup{urlcolor=RubineRed,linkcolor=RoyalBlue,citecolor=ForestGreen}
\usepackage[nameinlink]{cleveref}
\usepackage{amsthm}
\usepackage{thmtools}
\usepackage[framemethod=TikZ]{mdframed}
\mdfdefinestyle{mdbluebox}{
roundcorner = 10pt,
linewidth = 1pt,
skipabove = 12pt,
innerbottommargin = 9pt,
skipbelow = 2pt,
linecolor = blue,
nobreak = true,
backgroundcolor = TealBlue!5,
}
\declaretheoremstyle[
headfont=\sffamily\bfseries\color{MidnightBlue},
mdframed={style=mdbluebox},
headpunct={\\[3pt]},
postheadspace={0pt}
]{thmbluebox}
\mdfdefinestyle{mdredbox}{
linewidth=0.5pt,
skipabove=12pt,
frametitleaboveskip=5pt,
frametitlebelowskip=0pt,
skipbelow=2pt,
frametitlefont=\bfseries,
innertopmargin=4pt,
innerbottommargin=8pt,
nobreak=true,
backgroundcolor=Salmon!5,
linecolor=RawSienna,
}
\declaretheoremstyle[
headfont=\bfseries\color{RawSienna},
mdframed={style=mdredbox},
headpunct={\\[3pt]},
postheadspace={0pt},
]{thmredbox}
\mdfdefinestyle{mdgreenbox}{
skipabove=8pt,
linewidth=2pt,
rightline=false,
leftline=true,
topline=false,
bottomline=false,
linecolor=ForestGreen,
backgroundcolor=ForestGreen!5,
}
\declaretheoremstyle[
headfont=\bfseries\sffamily\color{ForestGreen!70!black},
bodyfont=\normalfont,
spaceabove=2pt,
spacebelow=1pt,
mdframed={style=mdgreenbox},
headpunct={ --- },
]{thmgreenbox}
\mdfdefinestyle{mdblackbox}{
skipabove=8pt,
linewidth=3pt,
rightline=false,
leftline=true,
topline=false,
bottomline=false,
linecolor=black,			
backgroundcolor=RedViolet!5!gray!5,
}
\declaretheoremstyle[
headfont=\bfseries,
bodyfont=\normalfont\small,
spaceabove=0pt,
spacebelow=0pt,
mdframed={style=mdblackbox}
]{thmblackbox}
\theoremstyle{plain}
\declaretheorem[name=Theorem,numberwithin=section]{theorem}
\declaretheorem[name=Lemma,sibling=theorem]{lemma}
\declaretheorem[name=Proposition,sibling=theorem]{proposition}
\declaretheorem[name=Corollary,sibling=theorem]{corollary}
\declaretheorem[name=Theorem,numbered=no]{theorem*}
\declaretheorem[name=Lemma,numbered=no]{lemma*}
\declaretheorem[name=Proposition,numbered=no]{proposition*}
\declaretheorem[name=Corollary,numbered=no]{corollary*}

\declaretheorem[name=Definition,numbered=no]{definition*}
\declaretheorem[name=Conjecture,sibling=theorem]{conjecture}
\declaretheorem[name=Conjecture,numbered=no]{conjecture*}

\declaretheorem[name=Algorithm,numbered=no]{algorithm*}
\declaretheorem[name=Claim,sibling=theorem]{claim}
\declaretheorem[name=Claim,numbered=no]{claim*}

\declaretheorem[name=Case,numbered=no]{case*}

\declaretheorem[name=Example,numbered=no]{example*}

\declaretheorem[name=Examples,numbered=no]{examples*}
\theoremstyle{definition}
\declaretheorem[name=Remark,sibling=theorem]{remark}
\declaretheorem[name=Remark,numbered=no]{remark*}

\declaretheorem[name=Notation,numbered=no]{notation*}

\declaretheorem[name=Exercise,numbered=no]{exercise*}

\declaretheorem[name=Fact,numbered=no]{fact*}

\declaretheorem[name=Problem,numbered=no]{problem*}

\declaretheorem[name=Question,numbered=no]{question*}

\declaretheorem[name=Abuse of notation,numbered=no]{abuse*}
\declaretheorem[name=Acknowledgements,numbered=no]{acknowledgements*}
\usepackage{tikz}
\usepackage{quiver}
\usepackage{adjustbox}

\usepackage{fancyhdr}
\begin{document}
\title{{{The Stratified and the Strong Maximal Rank Conjecture\\ in $\PP^3$ and $\PP^4$}}}
\author{Vlad Robu}
\address{\vskip 1.5em Humboldt Universität zu Berlin, Unter den Linden 6, 10117 Berlin}
\email{vlad.nicolae.robu@hu-berlin.de}
\date{\today}
\maketitle

\begin{abstract}
    We prove that the Strong Maximal Rank Conjecture holds for quadrics in $\PP^3$ and we prove the existence of a component of the expected dimension in $\PP^4$, as well as in a wide range of parameters $(g,d)$ in $\PP^r$ with $r\ge 5$. We propose the Stratified Strong Maximal Rank Conjecture which also takes into account the rank $k$ of quadrics and prove it works in most of the cases when $k=3$ and $k=4$. We also prove a partial result that concerns the unrepresentability of the canonical bundle of a general curve as a sum of $3$ pencils. 
\end{abstract}

\section{Introduction}

Brill-Noether theory concerns the study of linear series on general curves. If $C$ is a general curve of genus $g$, the Brill-Noether theorem states that the schemes 
$$W^r_d(C)=\{L\in \Pic^d(C)\mid h^0(C,L)\ge r+1\}\textnormal{ and }$$
$$G^r_d(C)=\{(L,V)\mid L\in W^r_d(C),V\in G(r+1,H^0(C,L))\}$$
are of dimension $\rho(g,r,d)=g-(r+1)(r-d+g)$. In particular, if $\rho(g,r,d)<0$, the two schemes above are empty. When $\rho(g,r,d)\ge 0$, we know that $W^r_d(C)$ and $G^r_d(C)$ are reduced and the latter is also smooth. If furthermore $\rho(g,r,d)\ge 1$, then the two schemes are even irreducible. For more details concerning the basics of Brill-Noether theory, we refer to \cite{ACGH}. 

Consider a linear series $\ell=(L,V)\in G^r_d(C)$ and let $\varphi:C\stackrel{\abs{V}}{\longrightarrow}\PP^r$ be the induced map. The Maximal Rank Conjecture (MRC), now a theorem due to Eric Larson \cite{Larson17}, states that if $C$ is general in $\mrond_g$ and $\ell=(L,V)$ is also general in $G^r_d(C)$, then the multiplication maps of sections
$$\Sym ^nV\to H^0\left(C,L^n\right)$$
are of maximal rank. There are also earlier proofs in specific ranges, of which we mention the approach with tropical methods \cite{JP15} that settles the conjecture for $n=2$, as well as \cite{BE87},\cite{BF10} that are concerned with the nonspecial range and the surjective range respectively. In this generality setting, it is known that $\varphi:C\injto \PP^r$ is actually an embedding. Since we can calculate $\dim_\CC\Sym^n H^0(C,L)=\binom{r+n}{n}$ and the generality assumption on $C$ ensures $h^0(C,L^n)=nd+1-g$ via Riemann-Roch, MRC gives a complete description of the Hilbert function of the embedded curve. 

Having in mind important applications to the Kodaira dimension of $\mrond_g$ \cite{Farkas09} that eventually led to the recent proof \cite{FJP25} that $\ol{\mrond}_{22}$ and $\ol{\mrond}_{23}$ are both of general type, Aprodu and Farkas proposed in \cite{AF11} a refinement, called the \textbf{Strong Maximal Rank Conjecture (SMRC)}. Keeping the generality assumption only on the abstract genus $g$ curve $C$, the conjecture is now concerned with all linear series $(L,V)\in G^r_d(C)$ and predicts that the multiplication map
$$\nu(L,V):\Sym^2 V\to H^0\left(C,L^2\right)$$
fails to be of maximal rank for the linear series in a determinantal locus $Q_d^r(C)\subset G^r_d(C)$ which is of the expected dimension 
$$\dim Q^r_d(C)=\rho(g,r,d)-1-\abs{\binom{r+2}{2}-(2d+1-g)}=:\alpha(g,r,d).$$

The following three theorems constitute our contribution to the current knowledge regarding the SMRC. Notice that, when we say a certain scheme $X$ is of the expected dimension $\delta\ge 0$, we also implicitly say it is nonempty. For a better view of the cases presented in \autoref{SMRC r=3}, \autoref{SMRC r=4} and in \autoref{SMRC r ge 5}, we recommend the reader to check the tables in \autoref{r=3}. 

\begin{theorem}
[SMRC for $r=3$]\label{SMRC r=3}
Let $C$ be a general curve of genus $g\ge 1$ and let $d$ be a positive integer such that $\rho(g,3,d)\ge 0$. Then the degeneracy locus in $G^3_d(C)$ defined by 
$$Q^3_d(C)=\{\ell\in G^3_d(C)\mid \nu(\ell)\textnormal{ \textit{is not of maximal rank}}\}$$
is either 

$\bullet$ empty when $(g,d)\in\{(1,4),(2,5),(4,6)\}$, when $d\le g+1$ or when $d=g+2$ and $g\ge 5$ is odd, or

$\bullet$ of the expected dimension $2(d-g-2)$, when $d\ge g+3$ or when $d=g+2$ and $g\ge 6$ is even.
\end{theorem}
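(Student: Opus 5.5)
The plan is to reduce the failure of maximal rank to curves lying on a quadric surface, and then count such quadrics with Brill--Noether theory for pencils. Put $N:=2d+1-g=h^0(C,L^2)$. First I dispose of the range $N<10$, where $\nu(\ell)$ is expected to be surjective. Together with $\rho(g,3,d)\ge0$, this leaves only finitely many pairs $(g,d)$, among them $(1,4),(2,5),(4,6)$. In these cases I check by hand that $\nu(\ell)$ is surjective for \emph{every} $\ell$. For example, an elliptic normal quartic is a complete intersection of two quadrics, so its kernel is exactly $2$-dimensional. Likewise a canonical genus $4$ curve lies on a unique quadric, so the map $10\to 9$ is onto. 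The genus $2$ quintic case I would handle similarly, via its Hilbert function. I would also verify that any remaining small pairs with $N\le 10$ either fall under $d\ge g+3$ or reduce to these checks.

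In the main range $N\ge 10$, failure of maximal rank means that $\nu(\ell)$ is not injective. Equivalently, the image $\varphi(C)$ lies on a quadric $S\subset\PP^3$. The case where $\varphi$ is not birational onto its image or $V$ has base points is treated separately: it factors through a lower-degree series, which has lower-dimensional families on a general curve. Otherwise $\varphi(C)$ is nondegenerate and irreducible, so $S$ is irreducible of rank $3$ or $4$.

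For rank $4$, write $S\cong\PP^1\times\PP^1$. The two rulings give pencils $A,B$ on $C$ of degrees $a,b$ with $a+b=d$, $L=A\otimes B$ and $V=\mathrm{Im}(H^0(A)\otimes H^0(B)\to H^0(L))$. Conversely, a pair of base-point-free pencils $(A,B)$ determines $\ell$. Hence this stratum of $Q^3_d(C)$ is the image of the union over $a+b=d$ of $G^1_a(C)\times G^1_b(C)$. Its dimension is
$$\rho(g,1,a)+\rho(g,1,b)=(2a-g-2)+(2b-g-2)=2(d-g-2)=\alpha(g,3,d),$$
using $\rho(g,3,d)-1-(N-10)=2d-2g-4$. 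The map from pairs to $\ell$ is finite, since $A,B$ are recovered from the rulings. The stratum is nonempty iff we can choose $a,b\ge\lceil (g+2)/2\rceil$ with $a+b=d$. This is exactly the condition $d\ge g+3$, or $d=g+2$ with $g$ even (take $a=b=(g+2)/2$). In the remaining cases, $d\le g+1$ or $d=g+2$ with $g$ odd, no such pair exists on a general curve.

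For rank $3$, $S$ is a cone and the lines through the vertex give a single pencil $A$. Away from the vertex, $L$ is $A^{\otimes m}$ twisted by a divisor supported on the preimage of the vertex. A parameter count shows this locus has dimension at most $\rho(g,1,\deg A)$ plus a small correction. That is strictly less than $2(d-g-2)$ when the latter is nonnegative, and the locus is empty otherwise. I also need that for $d\ge g+2$ each relevant $\ell$ from a pair $(A,B)$ really has $\varphi(C)$ nondegenerate on $S$. This follows by choosing $A,B$ general in their Brill--Noether loci and using the base-point-free pencil trick to get $H^0(A)\otimes H^0(B)\injto H^0(L)$.

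I expect the main obstacle to be the cone and non-birational strata. There one must bound dimensions for \emph{all} linear series, not just general ones, and show they never exceed the smooth-quadric stratum. This is what guarantees both the exact expected dimension and the emptiness statements.
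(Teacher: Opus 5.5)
Your strategy is the paper's. Non-injectivity means $\varphi(C)$ lies on a quadric. Rank-$4$ quadrics correspond to pairs of distinct base-point-free pencils, with $\rho(g,1,a_1)+\rho(g,1,a_2)=2(d-g-2)$. The cone stratum is smaller, and nonemptiness amounts to $d\ge 2\,\gon(C)$. (The paper gets the three small cases from Green--Lazarsfeld; your classical arguments also work.) However, the two strata you call the main obstacle are not actually handled, and your justification for one of them is not valid as written.

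\textbf{Base points and non-birational maps.} These need no separate treatment. For \emph{any} $\ell$, birational or not, the rulings of $S$ pull back to base-point-free pencils $(A_1,W_1),(A_2,W_2)$. One has $L=A_1\otimes A_2(F)$, with $F$ containing the base locus and $V(-F)$ the image of $W_1\otimes W_2$ (\autoref{character of rank 3 and 4}). The divisor $F$ adds $f=\deg F$ moduli but lowers $a_1+a_2$ by $f$, so the count is $2(d-g-2)-f$. By contrast, ``it factors through a lower-degree series, which has lower-dimensional families'' is false at face value. The family $\{\ell'+p\mid \ell'\in G^3_{d-1}(C),\ p\in C\}$ has dimension $4d-3g-15$, which exceeds $2(d-g-2)$ once $2d>g+11$. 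So you would have to use that $\ell'$ lies on a quadric and redo the count anyway.

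\textbf{The cone stratum.} This needs the actual count, which is \autoref{incidence variety for rank 3} for $r=3$. Here $L=A^{\otimes 2}(F)$, not $A^{\otimes m}$, and $h^0(A^{\otimes 2})=2a+1-g$ by Petri. Also $V(-F)$ is $\Sym^2W$ plus one more section. The dimension is therefore $f+(2a-g-2)+(2a-g-3)=2(d-g-2)-1-f$, using $2a+f=d$. The extra $(2a-g-3)+f$ is not a ``small correction''; it is the relation $2a+f=d$ that makes the bound work. Moreover $2a+1-g\ge 4$ forces $d\ge g+3$, which is what makes this stratum empty at $d=g+2$.

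\textbf{The lower bound.} ``$A,B$ are recovered from the rulings'' presumes $\varphi(C)$ lies on a unique quadric. This fails, for example, when $\varphi$ is an $m$-to-one cover of a twisted cubic, with $V=\Sym^3W$ (e.g.\ $g=4$, $d=9$). Then $\ker\nu(\ell)$ is a net, and a $2$-dimensional family of pairs lies over a single $\ell$. Generic finiteness is true, since an irreducible nondegenerate curve of degree $\ge 5$ lies on at most one quadric. The paper avoids the issue altogether: $Q^3_d(C)$ is determinantal, so every component has dimension $\ge\alpha(g,3,d)=2(d-g-2)$. Together with the upper bound and nonemptiness, this gives pure dimension $2(d-g-2)$.

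\textbf{A smaller correction.} $V$ is the image of $W_A\otimes W_B$, the two-dimensional pencil spaces, not of $H^0(A)\otimes H^0(B)$. When $h^0(A)=2$, the map on $H^0(A)\otimes H^0(B)$ has kernel $H^0(B\otimes A^{-1})$, so it need not be injective. For $W_A\otimes W_B\to H^0(L)$, injectivity holds for any two distinct base-point-free pencils by the base-point-free pencil trick, with no generality assumption.
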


Therefore, SMRC holds for $r=3$, except the cases $(g,d)=(2,5)$ and $(g,d)=(2k+1,2k+3)$ for $k\ge 2$, when $Q^3_d(C)$ is predicted to have dimension zero, yet it is empty. 
\vspace{0.2cm}

The next situation $r=4$ already turns out to be much more difficult. For the majority of the cases, we prove the existence of a component of the expected dimension, and we list beforehand the situations in which we know the conjecture to hold in its entirety. 

\begin{theorem}
[SMRC for $r=4$]\label{SMRC r=4}
Let $C$ be a general curve of genus $g\ge 1$ and let $d$ be a positive integer such that $\rho(g,4,d)\ge 0$ and $d\ge g+3$. 
Then the degeneracy locus in $G^4_d(C)$ defined by 
$$Q_d^4(C)=\{\ell\in G^4_d(C)\mid \nu(\ell)\textnormal{ \textit{is not of maximal rank}}\}$$
is either

$\bullet$ empty when $(g,d)\in\{(5,8),(6,9),(7,10),(1,5),(2,6),(3,7)\}$, 

$\bullet$ of dimension $2$ for $(g,d)\in\{(8,11),(4,8)\}$, dimension $3$ for $(g,d)=(5,9)$, dimension $5$ for $(g,d)=(6,10)$, dimension $8$ for $(g,d)=(4,9)$ and dimension $11$ for $(g,d)=(2,8)$, 

$\bullet$ contains a component of the expected dimension $\alpha(g,4,d)>0$ for any other pair $(g,d)$ with $2d\ge g+14$. 
\end{theorem}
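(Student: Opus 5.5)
Throughout, $d\ge g+3$ forces $2d+1-g\ge d+4\ge 15=\binom{6}{2}$ as soon as $d\ge 11$. More precisely, $h^0(L^2)=2d+1-g$ and $\dim\Sym^2V=15$, so the relevant regime is determined by the sign of $2d-g-14$. Our hypothesis $2d\ge g+14$ is exactly the injective range, since there $h^0(L^2)\ge 15$. In this range $\nu(\ell)$ fails to have maximal rank if and only if the image curve $\varphi(C)\subset\PP^4$ lies on a quadric. The plan is therefore to parametrize pairs (curve, quadric) and compute dimensions, via the stratification by the rank $k$ of the quadric. The expected codimension of $Q^4_d(C)$ in $G^4_d(C)$ is $2d-g-14+1$.

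\textbf{Nonemptiness and upper bound.} First, $Q^4_d(C)$ is a degeneracy locus of a map of vector bundles $\Sym^2\mathcal V\to\mathcal E$ over $G^4_d(C)$, of ranks $15$ and $2d+1-g$. Hence every component has dimension at least $\alpha(g,4,d)$ as soon as it is nonempty, and it suffices to exhibit a component of dimension at most $\alpha$. I would look for it among curves on rank $3$ quadric cones, i.e.\ cones over a conic in $\PP^2$ with vertex a line. Projecting from the vertex, such $\ell=(L,V)$ correspond to decompositions $L=A^{\otimes 2}\otimes\OO(B)$ (roughly), where $A$ is a pencil, $B$ is effective (the base locus along the vertex) and $V\supset\Sym^2 H^0(A)$. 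Equivalently, $\ell$ lies on a quadric of rank $\le 3$ exactly when $V$ contains a $3$-dimensional subspace of the form $\Sym^2W$ with $W\subset H^0(M)$ a pencil and $L(-D)=M^2$.

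\textbf{Dimension count of the rank-3 stratum.} I would parametrize triples $(A,D,V)$ with $A\in W^1_e(C)$, $D$ effective of degree $d-2e$, $A^2(D)=L$, and $V\supset \Sym^2H^0(A)$ a $5$-dimensional subspace of $H^0(L)$. The count is
\[
\rho(g,1,e)+(d-2e)+\dim G\bigl(2,h^0(L)-3\bigr),
\]
where $\rho(g,1,e)=2e-g-2$ and $h^0(L)=d+1-g$ when $L$ is nonspecial, which holds for $d\ge g+3$ in most cases. This gives $d-g-2+2(d-g-4)$ independently of $e$. Compared with $\alpha=\rho-1-(2d-g-14)$, where $\rho=g-5(g-d+4)$, the count should match after fixing $e$ by the constraint; if not, I would switch to rank $4$ quadrics, which are cones over smooth quadrics in $\PP^3$ and correspond to decompositions $L(-D)=A\otimes B$ with two pencils. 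The honest step is to choose the stratum whose parameter count equals $\alpha$, and then to show the resulting locus has generically injective rank-drop-$1$ behaviour. Lemma-type statements from the $r=3$ case (\autoref{SMRC r=3}) can be used, by projecting from a general point of $C$, to show that the general member of the stratum lies on exactly one quadric. That gives the component of expected dimension.

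\textbf{Exceptional cases.} For the finitely many listed $(g,d)$ with $2d<g+14$ or small $\alpha$, I would argue directly. Emptiness follows either from Castelnuovo-type bounds: a curve of degree $d$ on a quadric in $\PP^4$ has bounded genus, since projecting from a ruling reduces to \autoref{SMRC r=3}. Or, in the surjective range, it follows from Larson's MRC combined with a dimension count showing the rank-$\le 4$ strata are too small. The stated excess dimensions come from a stratum of rank-$3$ or rank-$4$ quadric containments that is larger than expected, computed by the same pencil-decomposition count.

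\textbf{Main obstacle.} The main difficulty is showing that the constructed stratum is genuinely a component, i.e.\ not contained in a larger component of $Q^4_d(C)$ coming from smooth quadrics. That requires a tangent-space or deformation computation at a general point: one would show that the derivative of the degeneracy map has the expected rank, using a Gieseker–Petri type injectivity for the pencils $A,B$ on the general curve $C$.
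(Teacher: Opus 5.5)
\textbf{The central construction cannot work.} In the injective range, $\alpha(g,4,d)=3(d-g)-7=3s+5$ with $s=d-g-4$. You correctly note that every component of $Q^4_d(C)$ has dimension at least $\alpha$. But your own rank-$3$ count gives $3(d-g)-10=3s+2$ independently of $e$, so no choice of $e$ can make it match. The analogous rank-$4$ count (two pencils $A_1,A_2$, a divisor of degree $f=d-a_1-a_2$, and one further section modulo the $4$-dimensional image of $V_1\otimes V_2$) gives $3s+4-f\le 3s+4$; this is the paper's bound $\dim\irond_C^4=3s+4$ for $r=4$. Hence every locus of series lying on a quadric of rank $\le 4$ has dimension strictly less than $\alpha$, and so is strictly contained in a larger component of $Q^4_d(C)$. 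In fact the general member of \emph{every} component lies on a unique smooth quadric, and the paper uses precisely your count to prove this.

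So the ``main obstacle'' you name is not something a tangent-space or Gieseker--Petri computation can overcome: the stratum is provably not a component. The proposed reduction to $\PP^3$ by projecting from a point of $C$ also fails. Projecting a smooth quadric in $\PP^4$ from one of its points is birational onto $\PP^3$, so the projected curve need not lie on any quadric.

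What is missing is a way to produce series on \emph{smooth} quadrics with the right number of moduli; these have no pencil description. The paper does this in two steps. First, when $\alpha(g,4,d)\ge g$, it fixes a general $L\in\Pic^d(C)$ and uses Kad\i k\"oyl\"u's theorem that $\Sigma_k(C,L)$ has the expected dimension. Counting the subspaces $V\supseteq W_q$ over each rank-$k$ quadric then gives a component of dimension at most $\alpha$ that dominates $\Pic^d(C)$; the divisorial pairs $(8,11)$ and $(6,10)$ serve as base cases for $s\in\{-1,0\}$. Second, it propagates along each column $d-g=\mathrm{const}$ by degenerating to $D\cup_p E$ with an elliptic tail. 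It uses a stack of pairs (limit $g^4_d$, quadric) subject to the compatibility $\ord_p(\rho_D)+\ord_p(\rho_E)\ge 2d$. A smooth quadric has contact order exactly $2$ with the curve at a general point, so $\ord_p(\rho_D)=4$. This forces $L_E=\OO_E(dp)$, $V_E=(d-5)p+H^0(E,\OO_E(5p))$ and a unique $\rho_E$. Hence the fibre over the nodal curve has a component of dimension $3s+5$, and semicontinuity finishes the argument.

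\textbf{The exceptional pairs.} Your treatment of these misidentifies the degeneracy condition. For $(1,5),(2,6),(3,7),(4,8),(5,9),(5,8),(6,9),(7,10)$ one has $2d+1-g<15$. So $\nu(\ell)$ always has a kernel, and failing maximal rank means failing to be \emph{surjective}. Every image curve lies on quadrics, so Castelnuovo bounds for curves on quadrics and rank-$\le 4$ strata are irrelevant, and Larson's MRC only controls the general $\ell$.

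The paper argues as follows. For these pairs every $g^4_d$ is complete. For the six pairs other than $(4,8)$ and $(5,9)$, every $g^4_d$ is also very ample, so Green--Lazarsfeld gives surjectivity for every $\ell$, hence emptiness. For $(4,8)$ and $(5,9)$, very ample series still have surjective $\nu$, so $Q^4_d(C)$ is exactly the locus of non-very-ample series, of dimension $\rho-2$ by Farkas' secant count.

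For the four divisorial pairs $(8,11),(6,10),(4,9),(2,8)$, where $2d+1-g=15$, the dimensions $2,5,8,11$ are just $\rho-1=\alpha$, not an excess stratum. They follow from two facts: nonemptiness, since a rank-$4$ quadric exists by the pencil-decomposition theorem; and properness $Q^4_d(C)\ne G^4_d(C)$, which comes from Larson's MRC.
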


Therefore, SMRC for $r=4$ is known to fail in only two instances so far, namely when $(g,d)=(7,10)$ or $(4,8)$, as the expected dimensions are $0$ and $1$ respectively, while we actually get an empty locus in the former case and a $2$-dimensional locus in the latter case. 

A complete solution to SMRC for $r=4$, that is beyond the existence of a component of $Q^4_d(C)$, looks very challenging at the moment, especially for the case when the failure of $Q^4_d(C)$ singles out (virtual) divisors on $\ol{\mrond}_g$, see also the discussion in the end of Section 2. 
\vspace{0.2cm}

When $r\ge 5$, we prove the existence of a component of $Q^r_d(C)$ which is of the expected dimension in a specific range. The result in \autoref{SMRC r ge 5} for $r=4$ serves in the incomplete range as the basis case in the proof by induction on $g,d$ of \autoref{SMRC r=4}. 

\begin{theorem}\label{SMRC r ge 5}
Let $g,d\ge 1$ and $r\ge 4$ be integers such that $d>g+r$, $2d+1-g\ge \binom{r+2}{2}$ and $\alpha(g,r,d)\ge g$. Then, for a general curve $C$ of genus $g$, the locus 
$$Q_{d}^r(C)=\{\ell\in G^r_d(C)\mid \nu(\ell)\textnormal{ \textit{is not of maximal rank}}\}$$
has a component of the expected dimension $\alpha(g,r,d)$ which maps dominantly onto $\Pic^d(C)$. 
\end{theorem}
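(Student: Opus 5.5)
We work entirely in the non-special range. Since $d>g+r$ we have $d\ge g+r+1>2g-2$ whenever this is relevant. More precisely, for $d\ge 2g-1$ every $L\in\Pic^d(C)$ is nonspecial with $h^0(L)=d+1-g\ge r+1$. The condition $d>g+r$ alone already gives $h^0(L)\ge r+1$ for nonspecial $L$, and for general $L\in\Pic^d(C)$ with $d\ge g$ we have $h^1(L)=0$. So over a dense open $U\subset \Pic^d(C)$ the variety $G^r_d(C)$ restricts to the Grassmannian bundle $G(r+1,H^0(L))$, of relative dimension $(r+1)(d-g-r)$. Because $2d+1-g\ge\binom{r+2}{2}$, the failure of maximal rank of $\nu(L,V)$ means non-injectivity, i.e.\ $V$ lies on a quadric. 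The plan is therefore to fix a general $L\in U$ and study the locus
$$Q(L)=\{V\in G(r+1,H^0(L))\mid \ker(\Sym^2V\to H^0(L^2))\neq 0\}.$$
We will show that it has a component of dimension $\alpha(g,r,d)-g$, which is $\ge 0$ by hypothesis. Letting $L$ vary then gives a component of $Q^r_d(C)$ of dimension $\alpha$ dominating $\Pic^d(C)$. The semicontinuity and the determinantal lower bound, namely that every component has dimension $\ge\alpha$, are standard, so only the upper bound and nonemptiness need proof.

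\textbf{Construction via an incidence correspondence.} We parametrize pairs (quadric, $V$) by prescribing the rank of the quadric, which is where the stratification naturally enters. The simplest choice is a quadric of rank $3$ or $4$ coming from pencils. A rank $\le 4$ quadric containing $\varphi_V(C)$ corresponds to a decomposition $L=A\otimes B$ together with two-dimensional subspaces $\langle s_1,s_2\rangle\subset H^0(A)$ and $\langle t_1,t_2\rangle\subset H^0(B)$ such that the four products $s_it_j$ lie in $V$. The relation $(s_1t_1)(s_2t_2)=(s_1t_2)(s_2t_1)$ then gives an element of the kernel.

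We will choose $A$ of degree $a$ and $B=L\otimes A^{-1}$, both nonspecial with at least two sections. We count parameters: $g$ for $A\in\Pic^a$, then $2(a-g-1)$ and $2(d-a-g-1)$ for the pencils, minus $1$ for the common scalar, and then $(r+1-4)(d-g-r)$ for extending the $4$-dimensional span $W$ of the products to $V$. Here we must check that $\dim W=4$ generically, which follows from the base-point-free pencil trick. The total dimension of this incidence family is computed directly.

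Comparing with $\alpha-g=(r+1)(d-g-r)-1-(2d+1-g-\binom{r+2}{2})-g$, we find that the rank-$4$ family has dimension at least $\alpha-g$ in the stated range. This is exactly the condition $\alpha\ge g$: the $g$ parameters of $A$ fit inside the freedom. So the family, which lies inside $Q(L)$, has the right size. Alternatively we could take the determinantal lower bound on the whole locus and only need an upper bound on a component containing our family.

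\textbf{Main obstacle: the upper bound.} The hardest step is showing that the component of $Q(L)$ through a general point $V_0$ of our family has dimension exactly $\alpha-g$. We plan to compute the tangent space to the determinantal locus at $V_0$. If the kernel is one-dimensional, spanned by the quadric $q$, the tangent space is the set of $\phi\in\mathrm{Hom}(V,H^0(L)/V)$ such that the induced element $\phi(q)\in H^0(L^2)/\mathrm{Im}\,\nu$ vanishes. The locus is smooth of the expected codimension iff this linear condition map is surjective onto the cokernel.

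To verify surjectivity we would degenerate. One option is to specialize $C$ (e.g.\ by induction on $g$, attaching elliptic tails or using a flag curve and limit linear series). Another is to check, for a general curve of the smallest genus allowed, that $H^0(L^2)$ is generated by $\mathrm{Im}\,\nu$ together with the images $q(\phi)$. These images are of the form $\sum \ell_i\,\phi(\ell_j)$ with $\ell_i$ the linear factors $s_it_j$. So the condition reduces to surjectivity of a multiplication map like $W\otimes H^0(L)\to H^0(L^2)/\Sym^2V$. This should follow from Castelnuovo–Mumford-type surjectivity for nonspecial bundles of large degree, which is where $d>g+r$ is used. We also need to check that the kernel at $V_0$ is exactly one-dimensional. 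This holds if $V_0$ is general in our family, using maximal rank (Larson's theorem) for a general $V$ to control the generic kernel. If the direct tangent computation fails, we would fall back on an inductive degeneration in $(g,d)$, as the paper does for $r=4$.
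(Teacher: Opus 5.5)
There is a genuine gap, and it begins with the dimension count your construction rests on. For fixed $L$, your rank-$4$ family has dimension $g+2(a-g-1)+2(d-a-g-1)+(r-3)(d-g-r)=2d-3g-4+(r-3)(d-g-r)$, even before the extra $-1$ you subtract. This equals $q(g,d-g,d,4)+(r-3)(d-g-r)$, whereas $\alpha(g,r,d)-g=q(g,d-g,d,r+1)$. Writing $R=d-g$,
\[
q(g,R,d,r+1)-\bigl(q(g,R,d,4)+(r-3)(R-r)\bigr)=\sum_{j=R-r+1}^{R-3}\bigl(j-(R-r)\bigr)=\binom{r-2}{2}\ge 1,
\]
which is exactly the codimension of rank $\le 4$ quadrics among all quadrics in $\PP^r$. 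So your family is always too small. Every component of $Q(L)$ has dimension at least $\alpha-g$, so the family's closure is a proper subset of any component containing it. A general point of an expected-dimensional component carries a kernel quadric of full rank $r+1$. The hypothesis $\alpha\ge g$ does not say that ``the $g$ parameters of $A$ fit''; it says $q(g,d-g,d,r+1)\ge 0$. Moreover, requiring $A$ and $B$ to be nonspecial pencils forces $d\ge 2g+2$, which the hypotheses do not imply. For example, $(g,r,d)=(10,4,16)$ satisfies all of them (with $\alpha=11$), and your family is empty there.

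So everything hinges on the upper bound, which you leave open. The tangent-space route needs $\ker\nu(L,V_0)=\langle q\rangle$, and Larson's theorem does not give this: it says a general $V$ has zero kernel, and says nothing about a general point of your special family. The surjectivity $W_q\cdot H^0(L)+\nu(\Sym^2V_0)=H^0(L^2)$ does hold when $A,B$ are nonspecial, by two applications of the base-point-free pencil trick, namely $V_B\cdot H^0(L)=H^0(L\otimes B)$ and $V_A\cdot H^0(L\otimes B)=H^0(L^2)$. But that only covers $d\ge 2g+2$. The degeneration fallback is not independent either, since the paper's $r=4$ induction takes this very theorem as its base case.

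The missing idea is to control quadrics of all ranks at once. For general $L$, \cite{Kadikoylu19} gives $\dim\Sigma_k(C,L)=q(g,d-g,d,k)$ for $3\le k\le d-g+1$. A rank-$k$ quadric $q\in\ker\nu(L)$ lies in $\Sym^2V$ if and only if $V$ contains the $k$-dimensional span $W_q$ of the linear forms in its normal form. Hence the pairs $(V,q)$ with $\rk q=k$ form a family of dimension $q(g,d-g,d,k)+(r+1-k)(d-g-r)$, and this is at most $q(g,d-g,d,r+1)=\alpha-g$. The inequality holds because $\binom{R-k+2}{2}-\binom{R-r+1}{2}$ is a sum of $r+1-k$ integers, each larger than $R-r$. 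Consequently the fiber of $Q^r_d(C)$ over a general $L$ has dimension at most $\alpha-g$. It is nonempty because $\widetilde{\Sigma}_{r+1}(C,L)\neq\emptyset$ when $q(g,d-g,d,r+1)\ge 0$. The determinantal lower bound then produces a component of dimension exactly $\alpha$ dominating $\Pic^d(C)$, with no tangent-space computation needed.
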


We mention that Aprodu and Farkas stated the SMRC also for the $n$-fold multiplication maps. However, in this article, we only concern ourselves with the case $n=2$, as it has already been presented above. In what follows, we propose a further refinement, that takes into account the rank $k$ of the quadrics.
\vspace{0.2cm}

We fix positive integers $g,r,d$ such that $r\ge 3$ and $\rho(g,r,d)\ge 0$. Let us also fix a general genus $g$ curve $C$. If we pick a Poincaré line bundle $\lrond$ on $C\times \Pic^d(C)$, we are able to construct vector bundles $\erond$ and $\frond$ over $G^r_d(C)$ such that the fibers over a point $\ell=(L,V)$ are $$\erond{(\ell)}=V\textnormal{ and }\frond{(\ell)}=H^0(C,L^2),$$ as well as a vector bundle morphism $\nu:\Sym^2 \erond\to \frond$ that globalizes the multiplication map $\nu(\ell)$. Notice that $\rk(\Sym^2 \erond)=\dim_\CC \Sym^2 V=\binom{r+2}{2}$ and $\rk(\frond)=h^0(C,L^2)=2d+1-g$. 
\vspace{0.2cm}

For $3\le k\le r+1$, we have the subvariety $\Sigma_k\subseteq \PP(\Sym^2\erond)$ such that the fiber $\Sigma_k(\ell)$ over each point $\ell=(L,V)\in G^r_d(C)$ parametrizes all quadric forms with entries in $V$ that have rank at most $k$. It is known that $\Sigma_k$ is of pure codimension $\binom{r-k+2}{2}$ in $\PP(\Sym^2 \erond)$. 

Let $\PP \ker(\nu)$ be the subvariety of $\PP(\Sym^2 \erond)$ where the map $\PP(\nu):\PP(\Sym^2 \erond)\dashrightarrow \PP(\frond)$ is not well defined. Also set $\Sigma_k(C)=\Sigma_k\cap \PP\ker(\nu)$ be the locus in $\Sigma_k$ where the induced rational map $\nu_k:\Sigma_k\dashrightarrow \PP(\frond)$ is not well defined. 

The fiber of $\Sigma_k(C)$ over each point $\ell\in G^r_d(C)$ is the projective variety $\Sigma_k(C,\ell)$ in $\PP(\Sym^2 V)$ of all quadric forms of rank at most $k$ that lie in $\PP\ker\left(\nu(\ell)\right)$. A naive dimension count based on the observation that $\Sigma_k(C,\ell)=\Sigma_k(\ell)\cap \PP\ker(\nu(\ell))$ sets the lower bound
$$\dim \Sigma_k(C,\ell)\ge \max\left\{-1,\binom{r+2}{2}-\binom{r-k+2}{2}-(2d+2-g)\right\}=:q(g,r,d,k),$$
where dimension $-1$ stands for an empty locus. Under the assumption $r-d+g\le 1$, it is proven in \cite{Kadikoylu19} that $\Sigma_k(C,\ell)$ is of pure dimension $q(g,r,d,k)$, for a general $\ell\in G^r_d(C)$. We propose a finer problem, which is concerned with the fiber over the degeneracy locus of $\nu_k$ for \textit{all} linear series $\ell\in G^r_d(C)$. 

\begin{conjecture}
[Stratified Strong Maximal Rank Conjecture]\label{SSMRC}
Fix integers $g,r,d,k$ as above. For a general curve $C$ of genus $g$, the degeneracy locus 
$$Q_{d,k}^r(C):=\big\{\ell\in G^r_d(C)\mid \Sigma_{k}(C,\ell)\textnormal{ \textit{is of dimension higher than} }q(g,r,d,k)\big\}$$
has dimension 
$$\dim Q_{d,k}^r(C)=\rho(g,r,d)-1-\abs{\binom{r+2}{2}-\binom{r-k+2}{2}-(2d+1-g)}=:\beta(g,r,d,k),$$
where, as above, negative dimension stands for an empty variety. 
\end{conjecture}

We remark that, when the rank $k=r+1$ is maximum possible, we recover the Strong Maximal Rank Conjecture. Also note that, unless $k=r+1$, the number $\beta(g,r,d,k)$ is not an actual dimensional lower bound for $Q_{d,k}^r(C)$, but rather a naive expectation for the dimension, since $Q_{d,k}^r(C)$ is not represented as an actual degeneracy locus for a morphism of vector bundles. Nevertheless, most of the following results confirm this naive expectation, with only a few instances where it fails. 
\vspace{0.2cm}

To make notation a bit easier to follow, set $s=d-g-r$ from now on. Then $s\le -1$ stands for the special range, while $s\ge 1$ means that any $g^r_d$ is necessarily incomplete. 

\begin{theorem}\label{SSMRC rank 3}
The stratified SMRC holds for $k=3$ in the special range, as well as in the nonspecial range for even degree $d$, when either $s=0$ with $g\ge r-1$ or $s\ge 1$ with $g\ge r-2$. Concretely, we have the description below. 
\[
\begin{tabular}{|c|c|}
\hline
$k=3$ & $r\ge 3$ \\
\hline
$s\le -1$ & empty, as expected  \\
\hline
\makecell{$s=0$ for $d$ odd} & \makecell{empty, while expected dimension is $r-2$} \\
\hline
\makecell{$s\ge 1$ for $d$ odd} & \makecell{dimension $(r-1)s$, that is $r-2$ less than expected, \\ for $g\ge r-1$ with $s=1$ or $g\ge r-2$ with $s\ge 2$} \\
\hline
\makecell{$s\ge 0$ for $d$ even} & \makecell{expected dimension $(r-1)(s+1)-1$, for $g$ as in the statement}  \\
\hline
\end{tabular}
\]
\end{theorem}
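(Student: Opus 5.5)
The plan is to translate the existence of a rank $3$ quadric through $\varphi(C)$ into a decomposition of the linear series, and then count dimensions of the resulting incidence variety using Brill--Noether theory on the general curve $C$. First I would record the numerology. Since $\binom{r+2}{2}-\binom{r-1}{2}=3r$, we get $q(g,r,d,3)=\max\{-1,\,3r+g-2d-2\}$ and $\beta(g,r,d,3)=\rho(g,r,d)-1-\abs{3r-(2d+1-g)}$. In the stated ranges of $g$ one checks $q=-1$. Hence $Q^r_{d,3}(C)$ is exactly the locus of $\ell=(L,V)$ such that $\varphi(C)$ lies on \emph{some} quadric of rank $\le 3$.

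Step one is the structural lemma. A quadric of rank $\le 3$ is a cone with vertex $\Lambda\cong\PP^{r-3}$ over a conic. So $\varphi(C)$ lies on it if and only if the projection from $\Lambda$ maps $C$ onto a conic $\cong\PP^1$ (it cannot collapse $C$, since $V$ gives a nondegenerate map). Pulling back $\OO_{\PP^1}(1)$ gives a pencil $(A,\langle s,t\rangle)$ of degree $e$, and $L\cong A^{2}(B)$ for an effective divisor $B$ of degree $b=d-2e$, namely the base locus of the restricted net. Concretely, the condition is that
$$u\cdot\Sym^2\langle s,t\rangle=\langle s^2u,\,stu,\,t^2u\rangle\subseteq V,\qquad u\in H^0(\OO_C(B)).$$
Conversely, any such data produces the quadric $xz=y^2$ in $\ker\nu(\ell)$. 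Rank $\le 2$ quadrics are excluded automatically: they would force $\varphi(C)$ into a hyperplane. In the special range $s\le-1$ the Petri map for $C$ general then gives emptiness. Indeed, $L=2A+B$ with $h^0(L)\ge r+1$ and $A$ a pencil contradicts injectivity of $\mu_0\colon H^0(A)\otimes H^0(K_C-A)\to H^0(K_C)$, applied after twisting down. Alternatively, the dimension count below becomes negative, matching $\beta<0$.

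Step two is the incidence count in the nonspecial range $s\ge0$. Here $h^0(L)=r+1+s$, and I would parametrize triples consisting of a $g^1_e$, the divisor $B\in C_b$, and a $V\in G(r-2,H^0(L)/\langle s^2u,stu,t^2u\rangle)$. For $b=0$ (so $d$ even and $e=d/2\ge(g+2)/2$, guaranteed by $g\ge r-1$ or $g\ge r-2$ in the stated ranges) this has dimension
$$(2e-g-2)+0+(r-2)s=(r-1)s+r-2=(r-1)(s+1)-1=\beta(g,r,d,3).$$
I would then show that the map to $G^r_d(C)$ is generically finite by recovering the pencil from the quadric. Each $\ell$ in the image carries finitely many rank $3$ quadrics, since a positive-dimensional family would give infinitely many pencils $A$ with $2A\le L$. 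Nonemptiness comes from taking any $g^1_{d/2}$. For $d$ odd necessarily $b\ge1$. For $s=0$ I must show no such $V$ exists, and for $s\ge1$ that the image has dimension only $(r-1)s$.

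The main obstacle is exactly this odd-degree analysis. The naive count with $b\ge1$ seems to give the same number as for $b=0$. Therefore the deficit of $r-2$ must come from a genuine constraint, which I expect is the following. For $b\ge 1$ the sections $s^2u,stu,t^2u$ all vanish on $B$, so $V$ must contain a $3$-dimensional subspace with a common base divisor. For nonspecial $L$ and $C$ general this forces $B$ to impose independent conditions in a way that costs $r-2$ parameters when choosing the remaining part of $V$ compatibly with nondegeneracy (equivalently, after removing base points, the relevant $(A,B)$ land in a proper subvariety of $\Pic^d(C)$). I would first try to make this precise by computing the tangent space to the incidence variety at a point with $b=1$, and by degenerating to the case $s=0$ to show emptiness. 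If that fails, I would argue by induction on $s$, adding a general point to $L$, and on $g$, via the degenerations used for \autoref{SMRC r=3}, using the lower bounds $g\ge r-1$ (for $s=1$) and $g\ge r-2$ (for $s\ge2$) to guarantee that the required pencils exist with the right dimension.
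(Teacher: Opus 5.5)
Your treatment of the special range and of even $d$ follows the same route as the paper. You use \autoref{character of rank 3 and 4} to turn a rank-$3$ quadric into a pencil plus a divisor, Petri to get emptiness for $s\le -1$, the incidence count with $b=0$, and then generic finiteness. The real gap is the odd-degree part, and it cannot be closed: the constraint you are looking for does not exist, and your ``naive'' count is the correct one.

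The correspondence only requires $u\cdot\Sym^2W\subseteq V$ with $u\in H^0(\OO_C(B))$. The remaining $r-2$ sections of $V$ need not vanish on $B$. Geometrically, the vertex $\PP^{r-3}$ of the cone may meet $\varphi(C)$, and for a basepoint-free $V$ of odd degree it must. By Petri and the base-point-free pencil trick, $h^1(A^{\otimes 2}(B))\le h^0(\omega_C\otimes A^{-2})=0$. So the family has dimension $b+\rho(g,1,e)+(r-2)s=(r-1)(s+1)-1$ for every $b$, odd or even. The paper gets the deficit $(r-2)b$ only because the proof of \autoref{incidence variety for rank 3} takes $V(-F)\in G(r+1,H^0(A^{\otimes 2}))$. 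In other words, it asserts that for odd $d$ ``any valid linear series has at least one basepoint''. That is not implied by \autoref{character of rank 3 and 4}, and it is false.

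Here is a concrete counterexample to the row ``$s=0$, $d$ odd: empty''. Take $g=2$, $r=3$, $d=5$, so $s=0$, $d$ is odd and $g\ge r-1$.
\begin{itemize}
\item For $p\in C$ let $L=\omega_C^{\otimes 2}(p)$ and $V=H^0(L)$; this is very ample with $\dim V=4$.
\item Let $\langle\sigma,\tau\rangle=H^0(\omega_C)$ and $u\in H^0(\OO_C(p))$. Then $\sigma^2u,\sigma\tau u,\tau^2u$ span $H^0(L(-p))\subset V$.
\item Hence $x_0x_2-x_1^2$ is a rank-$3$ quadric in $\ker\nu(\ell)$, with vertex $\varphi_L(p)$.
\end{itemize}
Since $q(2,3,5,3)=-1$, the locus $Q^3_{5,3}(C)$ contains the one-dimensional family $\{\omega_C^{\otimes 2}(p)\}_{p\in C}$. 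This is the expected dimension $r-2$, not the empty set.

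More generally, take $L=A^{\otimes 2}(p)$ with $(A,W)\in G^1_{(d-1)/2}(C)$ and any $V\supseteq u\cdot\Sym^2W$. This gives an $\bigl((r-1)(s+1)-1\bigr)$-dimensional family inside $\irond_C^3$ for every odd $d$ with $s\ge 0$. So neither emptiness for $s=0$ nor the value $(r-1)s$ can be proved, and the odd-degree rows of the statement are not correct as written. The upper bound $(r-1)(s+1)-1$, which is all the even case needs, is unaffected.

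There is also a smaller gap in your even case. You claim each $\ell$ in the image carries only finitely many rank-$3$ quadrics; this need not hold for every $\ell$. Your reason also breaks down when $e=d/2\ge g+2$: then $h^0(A)\ge 3$, and a single $A$ carries infinitely many pencils $W'$. What you actually need is finiteness for a general $V$. The paper gets this in two cases:
\begin{itemize}
\item For $e\le g+1$, $(A')^{\otimes 2}\cong A^{\otimes 2}$ leaves only finitely many $A'$, none in $W^2_e(C)$ when $A$ is general.
\item For $e\ge g+2$, it uses the dimension $2e-2g-2$ of $\Sigma_3(C,L)$ from \cite{Kadikoylu19}, together with $g\ge r-2$. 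This shows a general $\PP V\supseteq\PP\Sym^2W$ contains only finitely many other planes $\PP\Sym^2W'$.
\end{itemize}
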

\vspace{0.2cm}

Next, we discuss \autoref{SSMRC} for quadrics of rank $4$. 

\begin{theorem}\label{SSMRC rank 4}
The stratified SMRC holds for $k=4$ when $r=4$ as well as when $r\ge 5$ and $s\le -2$, $d=g+2$ or $d=g+3$ and $g$ is even. The following table describes the knowledge regarding the other cases as well. 

\[
\begin{tabular}{|c|c|c|}
\hline
$k=4$ & $r=4$ & $r\ge 5$ \\
\hline
$s\le -2$ & \makecell{empty, as expected} & \makecell{empty for $g$ even and $d=g+3$, \\ empty for $d=g+2$, as expected} \\
\hline
$s=-1$ & \makecell{empty for $g=5$ and $g=6$, \\ dimension $1$ for $g\ge 7$, as expected} & \makecell{a component of expected dimension $r-3$ \\ for $g\gg 0$} \\
\hline
$s\ge 0$ & \makecell{expected dimension $3s+4$ \\ for $g\gg 0$} & \makecell{a component of expected dimension \\ $(r-1)(s+2)-2$ \\ for $g\gg 0$} \\
\hline
\end{tabular}
\]
\end{theorem}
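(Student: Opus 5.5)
The key observation is that a quadric of rank at most $4$ in $\ker\nu(\ell)$ corresponds, after diagonalization, to an identity $s_1t_1=s_2t_2$ in $H^0(C,L^2)$ with $s_i,t_i\in V$. Such an identity arises exactly from a decomposition $L\cong A\otimes B$ with $h^0(A),h^0(B)\ge 2$. Here $A$ is the pencil spanned by $s_1,s_2$ after removing base points, and $B$ is its residual. Conversely, any such decomposition with $H^0(A)\otimes H^0(B)\to H^0(L)$ landing in $V$ produces a rank-$\le 4$ quadric, namely the $2\times 2$ determinant. Rank $3$ is the degenerate case $A\cong B$, which is presumably how \autoref{SSMRC rank 3} was treated.

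So I would stratify $Q^r_{d,4}(C)$ by the pairs $(A,B)$. I would parametrize triples consisting of a pencil $(A,W_A)\in G^1_a(C)$, a pencil $(B,W_B)\in G^1_{d-a}(C)$ and a series $V\in G^r_d(C)$ on $L=A\otimes B$ with $W_A\cdot W_B\subset V$. For $C$ general, the Brill--Noether theorem gives $\dim G^1_a(C)=\rho(g,1,a)$, which requires $a\ge \lceil g/2\rceil+1$. The condition $W_A\cdot W_B\subset V$ imposes $4$ linear conditions on an $(r+1)$-plane in $H^0(L)$, or $3$ conditions if the four products are dependent. The fibre of the parameter space over $(A,B)$ is therefore a Schubert cycle in $G(r+1,H^0(L))$ of computable codimension. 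The plan is to compute the dimension of this incidence variety, maximize over $a$, and compare the result with $\beta(g,r,d,4)$ and with $q(g,r,d,4)$.

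In the special range $s\le -2$ and for $r=4$, I would use a Brill--Noether additivity argument. We have $h^0(A\otimes B)\ge h^0(A)+h^0(B)$ type bounds, and the Gieseker--Petri theorem applies to the general curve. In particular, injectivity of the Petri map for $A$ and for $\omega_C\otimes A^{-1}$ forces the relevant $L$ to lie in a locus of dimension $\rho(g,1,a)+\rho(g,1,d-a)$. Comparing this with the dimension of $W^r_d(C)$ shows emptiness when $\beta<0$. The cases $d=g+2$ and $d=g+3$ with $g$ even reduce to the statement that $L$ cannot be a product of two pencils whose degrees fit into $\rho$. Parity enters through $\lceil g/2\rceil$ in the minimal gonality. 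When $s=-1$ and $r=4$, the locus should be exactly the finitely many, or one-dimensional family of, such factorizations. The exceptional emptiness for $g=5,6$ should come from checking small pencils by hand.

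For the existence statements with $s\ge 0$ and $g\gg 0$, I would exhibit a component directly. Take the family of all $(A,B,V)$ with $a$ chosen so that the incidence variety has dimension $(r-1)(s+2)-2$, respectively $3s+4$ or $r-3$. Then check two things: that the general member has $\Sigma_4(C,\ell)$ of dimension exceeding $q$ (which is automatic once $q=-1$ or via a local count), and that the map to $G^r_d(C)$ is generically finite. The complementary upper bound for $r=4$ would go by induction on $(g,d)$ as in the proof of \autoref{SMRC r=4}. Concretely, one degenerates to a curve with an elliptic tail, or uses the known results from \autoref{SMRC r ge 5} as a base case, and shows that the limit of an excess rank-$4$ quadric produces a factorization of the limit series.

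I expect the main obstacle to be the upper bound: showing that no other component exceeds the expected dimension, especially along loci where $\Sigma_4(C,\ell)$ is positive-dimensional without coming from a single rigid factorization. This is why only a component is claimed for $r\ge 5$. To control this, I would first try the limit linear series degeneration to a flag curve, tracking the vanishing sequences of $W_A$ and $W_B$ at the nodes.
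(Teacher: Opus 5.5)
\textbf{Verdict.} Your framework matches the paper's: rank $\le 4$ quadrics in $\ker\nu(\ell)$ become pairs of basepoint free pencils via \autoref{character of rank 3 and 4}, you build the incidence variety of triples $((A_1,V_1),(A_2,V_2),V)$ over $G^1_{a_1}(C)\times G^1_{a_2}(C)$, and you show the forgetful map to $G^r_d(C)$ is generically finite. But the emptiness step for $s\le -2$ has a genuine gap, and several other steps rest on inputs your plan does not supply.

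\textbf{The gap for $s\le -2$.} No dimension count of the kind you propose can prove emptiness here.

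For $d=g+2$, the bound $a_i\ge\lceil g/2\rceil+1$ forces $g=2a$ and $A_1,A_2\in W^1_{a+1}(C)$. So the candidate bundles $L=A_1\otimes A_2$ form a \emph{finite} set. The real question is whether one of these specific points has $h^0(L)\ge 5$, i.e.\ whether $\omega_C=A_1\otimes A_2\otimes A_3$ with $h^0(A_3)\ge 2$.

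Knowing that the products fill a locus of dimension $2\rho(g,1,a+1)=0$, and ``comparing'' with $\dim W^r_d(C)$, proves nothing. Both loci are determined by the same curve, so there is no transversality to invoke. For $g\ge 10$ the target $W^4_{g+2}(C)$ is nonempty of dimension $g-10$. Gieseker--Petri does not help, since the Petri map of $\omega_C\otimes A^{-1}$ is that of $A$. Inequalities of the form $h^0(A\otimes B)\ge\cdots$ point the wrong way.

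Tellingly, your count would give emptiness for every $s\le -2$. The theorem only claims $d=g+2$, and $d=g+3$ with $g$ even: exactly the cases where the degrees force one factor to be a minimal pencil on an even-genus curve.

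\textbf{The missing idea} is \autoref{sum of 3 pencils with a minimal}, proved in Section 4 via a K3 surface with $\Pic(S)=\ZZ H\oplus\ZZ E$, $H^2=2g-2$, $E^2=0$, $H\cdot E=a+1$.
\begin{itemize}
\item $S$ has no $(-2)$-curves, so $h^0(\OO_S(H-2E))=0$.
\item The kernel of $\nu(\omega_C\otimes A^{-1})$ equals that of $\Sym^2H^0(\OO_S(H-E))\to H^0(\OO_S(2H-2E))$.
\item A rank-$4$ element there would split $H-E=E_1+E_2$ into classes with at least two sections each.
\item Nefness of $E$ then forces $E_1=H+\beta_1E$ with $\beta_1\le -2$, contradicting $h^0(\OO_S(H-2E))=0$.
\end{itemize}
Irreducibility of $\hrond_{2a,a+1}$ and monodromy on the finite reduced $W^1_{a+1}(C)$ extend the statement to every minimal pencil.

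\textbf{The case $s=-1$.} A general pair has $h^0(A_1\otimes A_2)=d+1-g=r$, so your Schubert-cycle fibre is empty there. The $(r-3)$-dimensional component lives over the degeneracy locus $\grond_{C,a_1,a_2}=\{h^0(A_1\otimes A_2)\ge r+1\}$, which needs \autoref{deg locus in sum of pencils}:
\begin{itemize}
\item the upper bound $r-3$ comes from Farkas's dimension estimate for secant loci (\autoref{super special secant divisors});
\item nonemptiness comes from the Castelnuovo-number correspondence of \autoref{correspondence on pencils of same degree} together with Farkas's results on $V^{e-f}_e$;
\item $h^0=r+1$ at the general point comes from the K3 result again, and this pins down $V=H^0(A_1\otimes A_2)$.
\end{itemize}

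\textbf{The upper bound for $r=4$.} You do not need an elliptic-tail degeneration. That route would require controlling the whole special fibre, whereas Section 7 uses degeneration only to produce one component. The bound follows directly from the incidence variety once the jumping strata $\{h^0(A_1\otimes A_2)\ge d+1-g+h\}$ are bounded by the same secant-loci estimate. Your ``Schubert cycle of computable codimension'' skips this, since $h^0(L)$ varies with $(A,B)$.

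\textbf{Generic finiteness.} Finiteness of $\irond^4_C\to Q^r_{d,4}(C)$ is where ``$g\gg 0$'' enters, and it is not a formality.
\begin{itemize}
\item In even genus, one uses finiteness of $W^1_{a+1}(C)$ plus a genericity argument on the second pencil ($a\ge 3r-11$ for $s=-1$, $g\ge 2r+s-4$ for $s\ge 0$).
\item In odd genus, the minimal pencils form the curve $W^1_{a+2}(C)$. One must exclude that $A\mapsto L\otimes A^{-1}$ maps all of it into $W^1_{a_2}(C)$. The paper does this by viewing $W^1_{a+2}(C)$ as a curve in $\PP\ker\nu(L)\cap\Sigma_4(\PP^r)$ of degree at most $\delta_r$ (resp.\ $\delta_{r+s}$). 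It then compares the curve's genus $1+\frac{a}{a+1}\binom{2a+2}{a}$ with Castelnuovo's bound $\binom{\delta_r-1}{2}$.
\item For $r=4$ this is done via the fixed-point-free involution $A\mapsto L\otimes A^{-1}$ and the fact that a plane quintic has genus $6$.
\end{itemize}
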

\vspace{0.2cm}

The case $s\le -2$ is a consequence of a more general result, which is interesting on its own:

\begin{theorem}\label{sum of 3 pencils}
Let $a\ge 1$. The canonical bundle of a general smooth curve $[C]\in \mrond_{2a}$ cannot be written as a sum of three line bundles $A_0$, $A_1$, $A_2$ with $h^0(C,A_i)\ge 2$, for $i\in\{0,1,2\}$, such that one of them is of minimal degree (i.e. a pencil of degree $a+1$). 
\end{theorem}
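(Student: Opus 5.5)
The plan is to reduce the statement to a parameter count for triples of pencils and then make that count rigorous by degeneration.

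\emph{Setup.} Let $C$ be general of genus $g=2a$. Then $\gon(C)=a+1$, and $W^1_{a+1}(C)$ is a finite set of complete, base point free pencils. Suppose $K_C=A_0+A_1+A_2$ with $h^0(A_i)\ge 2$ and $A_0$ one of these minimal pencils. Set $d_i=\deg A_i$. Since the gonality is $a+1$, we have $d_1,d_2\ge a+1$ and $d_1+d_2=3a-3$.

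By Riemann--Roch and Gieseker--Petri, $M:=K_C-A_0$ is a complete $g^{a-1}_{3a-3}$, and $h^0(K_C-2A_0)=0$. Replacing $A_1,A_2$ by the moving parts of suitable pencils, we may assume every $A_i$ is a base point free pencil. The statement is then equivalent to the following claim: the point $M\in\Pic^{3a-3}(C)$ does not lie in the image of the addition map
$$W^1_{d_1}(C)\times W^1_{d_2}(C)\to \Pic^{3a-3}(C).$$

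\emph{Naive count.} The source has dimension $\rho(2a,1,d_1)+\rho(2a,1,d_2)=2(d_1+d_2)-4a-4=2a-10$. This is well below $g$, so a general point of the Picard variety is missed. The difficulty is that $M$ is a very special point, namely residual to a minimal pencil, so this count alone proves nothing.

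\emph{Degeneration.} To make the count rigorous, degenerate $C$ to a chain of $2a$ elliptic curves $E_1\cup\dots\cup E_{2a}$ with general moduli, joined at general points $p_i,q_i$ on $E_i$ (Eisenbud--Harris, or the Osserman/Welters elliptic-chain version). Assume the decomposition exists on the general fiber. After base change, take limits of $A_0,A_1,A_2$ as refined limit $g^1$'s, and take the limit of the multiplication data; the limit of $K_C$ is the limit canonical series.

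On each component $E_i$, the aspects of the three pencils sum to the aspect of the canonical series, up to twisting by multiples of $p_i,q_i$ that cancel globally. The key inequality comes from additivity of vanishing orders. If $\sigma_j$ is a section of the $E_i$-aspect of $A_j$, then $\sigma_0\sigma_1\sigma_2$ is a section of the canonical aspect, and its vanishing orders at $p_i$ and $q_i$ are the sums of the individual orders. Taking for each $j$ the section with maximal vanishing at $q_i$ bounds the top vanishing order of $|K|$ from below by the sum of the three top orders.

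Summing the ramification gains over the chain gives an adjusted Brill--Noether inequality
$$\rho(A_0)+\rho(A_1)+\rho(A_2)\ \ge\ g+(\text{correction}),$$
whose left side is $2a-10$. The correction term counts how often the canonical vanishing sequence can absorb a jump. Since the canonical series has total ramification exactly $g$ along the chain and every $E_i$ must carry a jump, I expect the correction to be nonnegative. The extra constraint that $A_0$ has $\rho=0$ forces its limit to jump on essentially every component, which leaves too few components for $A_1$ and $A_2$. This yields a contradiction.

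\emph{Main obstacle.} The main obstacle is this combinatorial step on the chain. One must show that at each $E_i$ the three pencils, whose vanishing orders at the nodes add up to those of a single section of the canonical aspect, cannot use the torsion condition $p_i-q_i$ (general, non-torsion) more efficiently than the naive count allows. One must also handle the case where the product of sections does not realize the top vanishing order of $|K|$.

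I would first try to handle the case $d_1\le d_2$ with $d_1=a+1$ directly: then $A_1$ is also a minimal pencil, and the claim reduces to $K-A_0-A_1$ not being a pencil. Since $2A_0$ and $A_0+A_1$ have $h^0=3$ or $4$ by Petri, this should follow from $h^1$-computations. The elliptic-chain argument would then be reserved for the remaining range of $(d_1,d_2)$.
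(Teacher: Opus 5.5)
Your set-up and the naive count ($\dim W^1_{d_1}(C)\times W^1_{d_2}(C)=2a-10$) are fine. The proposal stops, however, exactly where the proof should begin. The inequality $\rho(A_0)+\rho(A_1)+\rho(A_2)\ge g+(\text{correction})$ with nonnegative correction is only \emph{expected}, and the heuristic behind it is wrong.

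Say a limit pencil \emph{cusps} on $E_i$ if its $E_i$-aspect contains a section whose divisor is supported on $\{p_i,q_i\}$. On a general chain, a limit pencil of degree $d_j$ must cusp on at least $g-\rho(A_j)$ components. So $A_0$ cusps on all $2a$ components, and $A_1,A_2$ together cusp at least $2a+10$ times. Hence at least $10$ components carry cusps of all three pencils.

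Cusps of different pencils do not compete for components. Suppose $A_j|_{E_i}\cong\OO_{E_i}(u_jp_i+(d_j-u_j)q_i)$ for $j=0,1,2$, and $u_0+u_1+u_2$ equals the cusp order of the limit canonical series on $E_i$. Then the $E_i$-aspects satisfy $A_0\otimes A_1\otimes A_2\cong K$, and the product of the three cusp sections is precisely the cusp section of the canonical aspect. So your claim that $A_0$ ``leaves too few components for $A_1$ and $A_2$'' is not a valid mechanism, and no component-by-component count of ramification gains yields $\sum_j\rho(A_j)\ge g$.

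A contradiction can only come from a global analysis along the chain. It must track how three things constrain each other: the cusp orders, the line-bundle identity on every $E_i$, and the requirement that \emph{all} products of sections of the three aspects lie in the $g$-dimensional canonical aspect (in particular, that they avoid the orders missing from its vanishing sequences at $p_i$ and $q_i$). For $a=5$, all three pencils have $\rho=0$ and every step is rigid, and one can check that such an analysis closes quickly. For larger $a$, the pencils $A_1,A_2$ have positive $\rho$ and a lot of freedom. Handling that freedom is the real content of the theorem, and it is exactly the ``main obstacle'' you leave open.

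Your fallback for $d_1=a+1$ is also unjustified. Gieseker--Petri gives $h^0(C,K_C-2A_0)=0$, i.e.\ $h^0(C,A_0^{2})=3$. For $A_1\neq A_0$ in $W^1_{a+1}(C)$, you need $h^0(C,A_0\otimes A_1)\le 4$, equivalently $h^0(C,K_C-A_0-A_1)\le 1$, and this does not follow from Petri. The bundle $A_0\otimes A_1$ would simply be a point of $W^4_{2a+2}(C)$, which is nonempty of dimension $\rho(2a,4,2a+2)=2a-10\ge 0$ in the relevant range $a\ge 5$. This case is itself a nontrivial theorem: it is Voisin's result, and it is the case $r=3$ that the paper deduces from the present statement.

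The paper avoids degeneration altogether. By irreducibility of $\hrond_{2a,a+1}$ and monodromy on the reduced finite set $W^1_{a+1}(C)$, it suffices to treat one pair $(C,A)$. One takes $C\in|H|$ on a K3 surface $S$ with $\Pic(S)=\ZZ H\oplus\ZZ E$, $H^2=4a-2$, $E^2=0$, $H\cdot E=a+1$, and sets $A=\OO_C(E)$. Since $S$ has no $(-2)$-curves, $h^0(S,H-2E)=0$. Restriction identifies the kernel of $\Sym^2H^0(C,K_C-A)\to H^0(C,2K_C-2A)$ with the kernel of the analogous map on $S$. There, a quadric of rank at most $4$ would split $H-E$ into two classes with at least two sections each. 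The lattice forbids this: one of the classes would have the form $H+\beta E$ with $\beta\le -2$, hence no sections. The correspondence between rank-$4$ quadrics and pairs of pencils then yields the three-pencil statement.
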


As pointed out earlier, if $\ell\in G^r_d(C)$ is generic, then the locus in $\ker\nu(\ell)$ consisting of quadrics of rank at most $k$ is of the expected dimension $q(g,r,d,k)$. Consequently, when $q(g,r,d,k)=-1$, there are no quadrics of rank at most $k$ in $\ker\nu(\ell)$ for the generic $\ell\in G^r_d(C)$. By contrast, \autoref{SSMRC rank 4} tells us in particular that if $q(g,r,d,k)=-1$ still holds, $k\ge 5$ and we furthermore impose $d-g-r=s\ge -1$, then there exist linear series $\ell\in G^r_d(C)$ for which $\ker\nu(\ell)$ contains quadrics of rank at most $k$. Therefore, the generic behavior in $G^r_d(C)$ does not hold everywhere, and $Q^r_{d,k}(C)$ is nonempty as a consequence. Concretely, we have the following result. 

\begin{corollary}\label{existence for Q^r_d,k(C)}
Consider integers $g,r,d,k,s$ such that $r\ge 4$, $5\le k\le r+1$, $d-g-r=s\ge -1$ and $g\ge (k-2)r-\binom{k-1}{2}-2s$. If $C$ is a general smooth curve of genus $g$, then $Q^r_{d,k}(C)$ is nonempty.
\end{corollary}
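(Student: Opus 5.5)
The plan is to deduce the corollary from \autoref{SSMRC rank 4}, using the fact that a quadric of rank at most $4$ also has rank at most $k$ for every $k\ge 5$. In other words, $\Sigma_4(C,\ell)\subseteq \Sigma_k(C,\ell)$ for every $\ell\in G^r_d(C)$. So it suffices to find one $\ell\in G^r_d(C)$ such that $\ker\nu(\ell)$ contains a quadric of rank at most $4$, and to check that in the given range the number $q(g,r,d,k)$ equals $-1$. Then $\Sigma_k(C,\ell)\neq\emptyset$ has dimension $\ge 0>q(g,r,d,k)$, so $\ell\in Q^r_{d,k}(C)$.

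First, the numerical check. We compute
$$\binom{r+2}{2}-\binom{r-k+2}{2}=k(r+1)-\binom{k}{2},\qquad 2d+2-g=g+2r+2s+2.$$
Hence $q(g,r,d,k)\le -1$ is equivalent to $g\ge k(r+1)-\binom{k}{2}-2r-2s-1=(k-2)r-\binom{k-1}{2}-2s$. This is exactly the hypothesis, so the emptiness expected for generic $\ell$ is guaranteed and nonemptiness of $\Sigma_k(C,\ell)$ for any single $\ell$ suffices.

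Second, the existence of a rank $\le 4$ quadric in some kernel. \autoref{SSMRC rank 4} asserts that for $s\ge -1$ the locus $Q^r_{d,4}(C)$ is nonempty. For $r=4$ it has dimension $1$ when $s=-1$ and dimension $3s+4$ when $s\ge 0$; for $r\ge 5$ it contains components of dimension $r-3$ and $(r-1)(s+2)-2$ respectively. These hold for $g$ large, or for $g\ge 7$ when $r=4$ and $s=-1$. A point $\ell$ of $Q^r_{d,4}(C)$ has $\Sigma_4(C,\ell)$ of dimension $>q(g,r,d,4)\ge -1$, hence nonempty. For more concreteness, I would instead reproduce the underlying construction directly. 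Write $L=A\otimes B$ with pencils $A=\langle a_0,a_1\rangle$, $B=\langle b_0,b_1\rangle$, and choose $V\ni a_ib_j$. The rank $4$ quadric $x_{00}x_{11}-x_{01}x_{10}$ then lies in $\ker\nu(\ell)$. By Brill--Noether theory on a general curve, such a decomposition exists once $\deg A,\deg B\ge \gon(C)$ and $h^0(A\otimes B)\ge r+1$, which holds for $s\ge -1$ with suitable degrees.

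The main obstacle is the genus range. \autoref{SSMRC rank 4} is stated for $g\gg 0$ in some cases, while the corollary allows every $g\ge (k-2)r-\binom{k-1}{2}-2s$. I would handle this with the explicit construction rather than by citing the theorem. Choose pencils of degrees $\lceil g/2\rceil+1$ and $d-\lceil g/2\rceil-1$. Since $s\ge -1$, these give $h^0(A\otimes B)\ge d-g+1\ge r$, with one more section available for the generic choice, or else a special line bundle with $h^0\ge r+1$ obtained by taking $\deg B$ larger. It remains to verify that the four products $a_ib_j$ are independent and fit inside an $(r+1)$-dimensional $V$. Their independence follows from the base-point-free pencil trick, since $h^0(B\otimes A^{-1})=0$ for these degrees. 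That is the only genuinely nontrivial point.
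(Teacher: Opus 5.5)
Your reduction is correct and matches the paper. A quadric of rank at most $4$ has rank at most $k$, and your computation correctly shows that the genus hypothesis is exactly $q(g,r,d,k)=-1$. So it suffices to exhibit one $\ell\in G^r_d(C)$ with a quadric of rank at most $4$ in $\ker\nu(\ell)$. For $s\ge 0$ your explicit construction works, since any two pencils with $a_1+a_2=d$ give $h^0(A_1\otimes A_2)\ge d+1-g=r+s+1\ge r+1$.

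\textbf{The gap is the case $s=-1$, which is the only case with real content.} There $d=g+r-1$, and Riemann--Roch gives $h^0(A\otimes B)=r+h^1(A\otimes B)$. For a generic pair of pencils $h^1(A\otimes B)=0$: the paper checks that the minimal value $d+1-g$ is attained on every component of $W^1_{a_1}(C)\times W^1_{a_2}(C)$. So your ``one more section available for the generic choice'' is false. ``Taking $\deg B$ larger'' is not available either: $\deg A+\deg B=d$ is fixed and $\deg A$ is already the gonality. Absorbing base points $F$ only makes things worse, since you would then need $h^1(L(-F))\ge f+1$. What you actually need is a pair of pencils with $a_1+a_2=g+r-1$ and $A_1\otimes A_2$ special, i.e. $\omega_C\cong A_1\otimes A_2\otimes\OO_C(D)$ with $D$ effective. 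This is the nonemptiness of $\grond_{C,a_1,a_2}$ in \autoref{deg locus in sum of pencils}, which is exactly what the paper invokes for this corollary in its remark on the divisorial case.

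That nonemptiness is a genuine existence result, not a Brill--Noether triviality. The paper takes $A_1$ of minimal degree and finds $A_2'\in W^1_{a_1}(C)$ with $h^0(A_1\otimes A_2')\ge 2a_1+2-g$. It then writes $\omega_C=A_1\otimes A_2'\otimes\OO_C(D')$ and moves $a_2-a_1$ points of $D'$ into $A_2'$. For $g$ even the first step is elementary: two distinct minimal pencils give $h^0(A_1\otimes A_2')\ge 4$ in degree $g+2$ by the base-point-free pencil trick. For $g$ odd it rests on \autoref{correspondence on pencils of same degree}, i.e. on Farkas' count of secant divisors. The splitting also needs $a_2\le g$, i.e. $\ceiling{g/2}\ge r-2$; your hypothesis guarantees this, since at $s=-1$ it gives $g\ge 3r-4$. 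Alternatively, instead of \autoref{SSMRC rank 4} with its $g\gg 0$, you could cite the nonemptiness of $\irond_C^4$ from \autoref{incidence variety for rank 4 quadrics}, whose hypotheses follow from yours.

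Finally, the step you single out as ``the only genuinely nontrivial point'' is not the issue. For distinct base-point-free pencils the four products are automatically independent, and even a rank $3$ quadric would suffice since $k\ge 5$. Your justification $h^0(B\otimes A^{-1})=0$ is also false in general, e.g. for $B=A(p_1+\dots+p_m)$.
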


\begin{acknowledgements*}
I would like to thank my advisor Gavril Farkas for the support and for all the valuable discussions. This research has been carried out as part of the DFG Research Training
Group 2965 ``From geometry to numbers" project number 512730679
involving Humboldt-Universität zu Berlin and Leibniz Universität
Hannover. This work has also been funded by the DFG under Germany's Excellence Strategy – The Berlin
Mathematics Research Center MATH+.
\end{acknowledgements*}

\section{Previous work in the Strong Maximal Rank Conjecture}

In this section, we collect some of the results that are already in the literature, which indirectly imply some cases of the Strong Maximal Rank Conjecture. We start with the famous theorem of Green and Lazarsfeld \cite{GL86}, which states that for any \textit{very ample} line bundle $L$ on an arbitrary smooth curve $C$ of genus $g$ such that 
\begin{align}\label{GL degree bound}
    \deg(L)\ge 2g+1-2h^1(C,L)-\Cliff(C),
\end{align}
the multiplication map $\nu(L)$ is surjective. Using Theorem 4.e.1\cite{Green84}, the same statement can be extended also to the multiplication maps $\Sym^n H^0(C,L)\to H^0\left(C,L^n\right)$. As in the literature this result is often connected to Green and Lazarsfeld's secant conjecture, it is also interesting to see that it has immediate implications in the Strong Maximal Rank Conjecture. 

Concretely, one can easily check that \eqref{GL degree bound} holds only if $h^1(C,L)\in\{0,1\}$. On the other hand, if $-s=r-d+g\in \{0,1\}$, then under the assumption that $\rho(g,r,d)=g+s(r+1)<r+s$, one can check that all $g^r_d$'s on the general genus $g$ curve $C$ are complete and very ample. Therefore, Green and Lazarsfeld's theorem confirms the Strong Maximal Rank Conjecture in the following instances: 

$\bullet$ $r-d+g=1$ with $r+1\le g\le 2r-1$, and 

$\bullet$ $r-d+g=0$ with $1\le g\le r-1$. 
\\
There is one single exception to this matter, namely when $(g,r,d)=(7,4,10)$: the expected dimension $\alpha(7,4,10)=0$ is zero, yet the above discussion just proved that $Q^4_{10}(C)=\emptyset$ for genus $7$ curves $C$. 
\vspace{0.2cm}

In the other range $\rho(g,r,d)=g+s(r+1)\ge r+s$, Green and Lazarsfeld's result still gives information when $s=0$ and $r\le g\le 2r-3$. In other words, it is still true that any very ample $g^r_d$ on a genus $g$ curve $C$ has a surjective multiplication map $\nu(\ell)$. This time, however, Theorem 0.5\cite{Farkas08} shows that there exist non-very ample $g^r_d$'s on the general genus $g$ curve $C$ and, furthermore, we have the following dimensionality statement
\begin{align}\label{dim of non very ample}
    \dim\left\{\left(\ell,(x,y)\right)\in G^r_d(C)\times C_2\mid \dim \ell(-x-y)\ge r-1\right\}=\rho(g,r,d)-r+2.
\end{align}
A non-very ample linear series $\ell\in G^r_d(C)$ has a non-surjective multiplication map $\nu(\ell)$. Therefore, for $r\le g\le 2r-3$ with $s=0$, the variety $Q^r_d(C)$ is simply parametrizing the non-very ample linear series. It follows from \cite{GHS02} that, if the curve $C$ is assumed to be general, then for the generic point $(\ell,(x,y))$ in \eqref{dim of non very ample} the series $\ell$ induces a generically injective map from $C$ to $\PP^r$, hence there are finitely many pairs $(x,y)\in C_2$ to which it corresponds. Consequently, we obtain $\dim Q^r_d(C)=\rho-r+2$ in this very specific area. For $r=4$ (see also \autoref{r=4}), this shows for the generic curve $C$ that

$\bullet$ if $(g,d)=(4,8)$, then $\dim Q^4_8(C)=2$ and yet the expected dimension is $\alpha(4,4,8)=1$, 

$\bullet$ if $(g,d)=(5,9)$, then $\dim Q^4_9(C)=3$ and this is also the expected dimension $\alpha(5,4,9)=3$.
\\
We close this circle of ideas by also mentioning \eqref{dim of non very ample} gives an existence statement for $Q^r_d(C)$ in all the other instances of $(g,r,d)$ with $r-d+g\in\{0,1\}$ and $\binom{r+2}{2}>2d+1-g$, by simply considering any non-very ample $g^r_d$. 

In \cite{CGZ21} it is also shown that, when $\binom{r+2}{2}>2d+1-g$, that is the surjective area, the class of $Q^r_d(C)$ is positive in a vast range. In particular, in the same range the nonemptiness of $Q^r_d(C)$ is ensured. As it is shown in the same article, this range has important connections to the Bertram-Feinberg-Mukai conjecture \cite{BF98}, \cite{Mukai97}. 
\vspace{0.2cm}

On the other hand, there are nontrivial results that consider the injective range as well, namely when $\binom{r+2}{2}\le 2d+1-g$ and, moreover, the expected dimension $\alpha(g,r,d)<0$ is negative. Concretely, when $C$ is a general curve of genus $g$, 

$\bullet$ in \cite{Teixidor03} it is shown via degeneration methods that $\nu(\ell)$ is injective for all linear series $\ell\in G^r_d(C)$ with $d\le g+1$, 

$\bullet$ in \cite{FJP25}/\cite{LOTZ24} it is shown via tropical methods/degeneration methods that $\nu(\ell)$ is injective for all linear series $\ell\in G^r_d(C)$ when $(g,r,d)\in \left\{(22,6,25),(23,6,26)\right\}$; this particular result has important applications to the geometry of the moduli spaces of curves, as it is shown in the former article that $\mrond_{22}$ and $\mrond_{23}$ are of general type, 

$\bullet$ in \cite{FO11}, \cite{BF18}, \cite{FP05} and \cite{FV14} it is shown that when $d=g+2$, and either $g\ge 11$ is odd or $g\in\{10,12\}$, then $\nu(\ell)$ is injective for all linear series $\ell\in G^4_d(C)$; for $g=12$, this result gives a divisor in $\ol{\mrond}_{12}$ which is used to prove that the moduli space of odd spin curves $\ol{\srond}_{12}^{-}$ is of general type, and

$\bullet$ in \cite{Farkas09} it is shown that SMRC holds when $g=s(2s+1)$, $r=2s$ and $d=2s(s+1)$, for all $s\ge 2$.

\section{Quadrics of rank $3$ and $4$ and the Strong Maximal Rank Conjecture in $\PP^3$}

\subsection{Quadrics of rank $3$ and $4$ and linear series}

We begin with the following characterization of linear series $\ell\in G^r_d(C)$ that contain a quadric of rank at most $4$ in the kernel of their associated multiplication map $\nu(\ell)$. This will be used recurrently throughout the text. 

\begin{lemma}\label{character of rank 3 and 4}
Let $(L,V)$ be a $g^r_d$ on a genus $g$ curve $C$ and let $B$ be its base locus. Then there is a one to one correspondence between quadrics of rank at most $4$ in $\PP\ker\nu (L,V)$ and triplets $(\ell_1,\ell_2,F)$ consisting of basepoint free pencils $\ell_1=(A_1,V_1)$, $\ell_2=(A_2,V_2)$ and an effective divisor $F\ge B$ such that $L(-F)=A_1\otimes A_2$ and $V(-F)$ contains the image of $V_1\otimes V_2$ via the multiplication map $V_1\otimes V_2\to H^0(C,A_1\otimes A_2)\cong H^0(C,L(-F))$. 

Furthermore, if we look at the quadrics of rank $3$ in $\PP \ker\nu(L,V)$, then we also require $\ell_1=\ell_2$ in the above. 
\end{lemma}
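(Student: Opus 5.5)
The plan is to use the standard normal form of a quadric of rank at most $4$: it factors as a $2\times 2$ determinant of linear forms. Fix a quadric $Q\in\PP\ker\nu(L,V)$ of rank at most $4$. After a linear change of coordinates on $V$ we can write
$$Q=s_1t_2-s_2t_1,\qquad s_1,s_2,t_1,t_2\in V.$$
If the rank is exactly $3$, we can instead write $Q=s_1s_3-s_2^2$, which is the determinant of $\begin{pmatrix} s_1 & s_2\\ s_2 & s_3\end{pmatrix}$. Saying that $Q$ lies in the kernel of $\nu$ means exactly that $s_1t_2=s_2t_1$ in $H^0(C,L^2)$, i.e. the two sections of $L$ are proportional as rational functions: $s_1/s_2=t_1/t_2$.

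From this relation we extract the triple $(\ell_1,\ell_2,F)$ as follows.
\begin{enumerate}
\item Let $D_1=\gcd(\mathrm{div}\, s_1,\mathrm{div}\, s_2)$ and $D_2=\gcd(\mathrm{div}\, s_1,\mathrm{div}\, t_1)$. Since $Q$ has rank at least $3$, neither pair of sections is proportional.
\item Set $A_1=L(-D_1)$, with $V_1$ spanned by $s_1,s_2$ divided by their common divisor. This is a basepoint free pencil. Similarly, $A_2=L(-D_2)$ with $V_2=\langle s_1,t_1\rangle$ is a basepoint free pencil.
\item The relation $s_1t_2=s_2t_1$ shows that the matrix of sections has rank one in the sense of divisors. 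Writing $\mathrm{div}\, s_1=F+E_1+E_2$, where $E_1\in|A_1|$ and $E_2\in|A_2|$ are coprime pieces, one gets $L(-F)\cong A_1\otimes A_2$.
\item Under this identification the four sections $s_1,s_2,t_1,t_2$ are exactly the images $\sigma_i\tau_j$ of the products of bases $\sigma_1,\sigma_2$ of $V_1$ and $\tau_1,\tau_2$ of $V_2$. This is the essential computation: comparing zeros of $s_1t_2=s_2t_1$ gives $t_2=\sigma_2\tau_2\cdot f$, where $f$ is a section of $\OO(F)$.
\item $F\ge B$ holds automatically, because $F$ divides all four sections and hence every section of $V$ appearing.
\end{enumerate}

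Conversely, given a triple $(\ell_1,\ell_2,F)$, take bases $\sigma_i$ of $V_1$ and $\tau_j$ of $V_2$, and a section $f$ with divisor $F$. The sections $f\sigma_i\tau_j$ lie in $V$ by hypothesis, and the quadric $x_{11}x_{22}-x_{12}x_{21}$ in these elements is killed by $\nu$. Its rank is at most $4$. It is nonzero because the $f\sigma_i\tau_j$ span a space of dimension $3$ or $4$: the base-point-freeness of the pencils (via the base point free pencil trick) shows the kernel of $V_1\otimes V_2\to H^0(A_1\otimes A_2)$ is $H^0(A_1^{-1}\otimes A_2)\otimes\wedge^2$-type. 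This kernel is nonzero only if $A_1\cong A_2$ with $V_1=V_2$, and in that case it is spanned by the antisymmetric tensor, which maps to $0$ anyway. So the quadric is a genuine nonzero element. Changing bases changes the quadric only by a scalar, and the symmetric case $\ell_1=\ell_2$, $\sigma=\tau$ gives $x_{11}x_{22}-x_{12}^2$, of rank $3$.

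The main obstacle is making the correspondence genuinely one-to-one, namely showing that the triple is determined by the quadric and does not depend on the chosen determinantal presentation. A rank-$4$ quadric has two rulings, so the two pencils could a priori be swapped. I would handle this by treating the triple as unordered in $(\ell_1,\ell_2)$, and by noting that the pencils are recovered intrinsically: the two rulings of $Q$ are the two families of planes in $V$ lying on $Q$, and each ruling yields one pencil after removing base points. For rank $3$ there is a single ruling, which forces $\ell_1=\ell_2$. Conversely, a rank-$3$ quadric arising from $\ell_1\neq\ell_2$ would give four sections spanning only a $3$-dimensional space. This would force a relation in $V_1\otimes V_2$, which by the kernel computation above requires $\ell_1=\ell_2$.
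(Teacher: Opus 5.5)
Your argument is correct in substance, but it follows a different route from the paper's. The paper argues geometrically. Each ruling of the quadric cuts a basepoint free pencil on $\varphi(C)$. It then simply asserts $L(-B)=A_1\otimes A_2\otimes\OO_C(F')$ with $F'$ effective, and sets $F=F'+B$, so that $F\ge B$ holds by construction. Its converse uses the same $2\times 2$ determinant you use.

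You instead start from a presentation $Q=s_1t_2-s_2t_1$ and read everything off from divisors of sections. This gives you more than the paper writes down:
\begin{itemize}
\item $F$ becomes intrinsic: it is the base locus of the span of $s_1,s_2,t_1,t_2$, i.e.\ of the smallest $W_Q\subseteq V$ with $Q\in\Sym^2 W_Q$.
\item The effectivity of the residual divisor can actually be checked rather than asserted.
\item Your pencil-trick argument covers the case $\ell_1=\ell_2$, where the paper's converse quadric has rank $3$, not $4$.
\item You correctly notice that the pair $\{\ell_1,\ell_2\}$ must be taken unordered, which the paper leaves implicit.
\end{itemize}
The cost is that you must show independence of the chosen presentation, and for that you fall back on the rulings, i.e.\ on the paper's viewpoint.

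Two steps need repair.

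First, the pieces $E_1=\mathrm{div}\,s_1-D_1$ and $E_2=\mathrm{div}\,s_1-D_2$ are not coprime in general. In the rank $3$ presentation $t_1=s_2$ they coincide. In rank $4$, a point where $s_1,s_2,t_1,t_2$ vanish to orders $2,1,1,0$ lies in both. What you actually need is that $F:=D_1+D_2-\mathrm{div}\,s_1$ is effective. Let $a,b,c,e$ be the orders of $s_1,s_2,t_1,t_2$ at $p$, so that $a+e=b+c$. A four-case check gives $\ord_p F=\min(a,b)+\min(a,c)-a=\min(a,b,c,e)\ge 0$. Hence $F$ is precisely the greatest common divisor of the four divisors, and the factorizations $s_1=f\sigma_1\tau_1$, $s_2=f\sigma_2\tau_1$, $t_1=f\sigma_1\tau_2$, $t_2=f\sigma_2\tau_2$ follow.

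Second, your justification of $F\ge B$ is backwards: knowing that $F$ divides the four sections says nothing about $B$. The right reason is that $B$ divides every section of $V$, in particular $s_1,s_2,t_1,t_2$, and $F$ is their greatest common divisor.

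With these two fixes the proof goes through.
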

\begin{proof}
Let $\varphi=\varphi_{\ell(-B)}:C\to \PP^r$ be the map induced by $\ell=(L,V)$. We know that $\PP\ker\nu(L,V)$ parametrizes all quadrics in $\PP^r$ that contain the image curve $\varphi(C)$. 

If $Q$ is a quadric of rank $4$ that contains $\varphi(C)$, then $Q$ has two rulings, each of which cuts a pencil on $\varphi(C)$. If $\ell_1=(A_1,V_1)$ and $\ell_2=(A_2,V_2)$ are these two distinct pencils, then they are basepoint free and we can write $L(-B)=A_1\otimes A_2\otimes \OO_C(F')$, for some effective divisor $F'$ on $C$. If we set $F=F'+B\ge B$, we obtain the direct association. 

Conversely, consider any two distinct basepoint free pencils $\ell_1=(A_1,V_1)$ and $\ell_2=(A_2,V_2)$ and any $F\ge B$ such that $L(-F)=A_1\otimes A_2$ and $V(-F)$ contains the image of $V_1\otimes V_2$ via the multiplication map $V_1\otimes V_2\to H^0(C,A_1\otimes A_2)\cong H^0(C,L(-F))$. Let $\{\sigma_0,\sigma_1\}$ and $\{\tau_0,\tau_1\}$ be bases for $V_1$ and $V_2$ respectively, and also let $s$ be a global section of $\OO_C(F)$. Then 
$$ s\sigma_0\tau_0\otimes s\sigma_1\tau_1-s\sigma_0\tau_1\otimes s\sigma_1\tau_0\in \ker\nu(L,V)$$
is a nonzero quadric form in the kernel and it has rank $4$. This settles the other association and it can be easily seen that it is the inverse of the direct one. The same constructions work for the quadrics of rank $3$, with the extra remark that rank $3$ quadrics have a single ruling and hence the associated pencils are, in fact, equal. This establishes the lemma. 
\end{proof}

\subsection{The incidence variety of linear series and rank $3$ quadrics}

The lemma above already allows us to make a clear dimensionality statement regarding linear series $\ell\in G^r_d(C)$ that contain rank $3$ quadrics in $\ker\nu(\ell)$. 

\begin{theorem}\label{incidence variety for rank 3}
Let $g,r,d\ge 1$ be integers such that $r\ge 3$ and $\rho(g,r,d)\ge 0$. For a general curve $C$ of genus $g$, consider the incidence variety 
$$\irond_C^3=\left\{(\ell,q)\mid \ell\in G^r_d(C)\textnormal{ and }q\in \Sigma_3(C,\ell)\right\}$$
and set $s=d-g-r$. Then 

$\bullet$ if $s\le -1$ or if $s=0$ and $d$ is odd, $\irond_C^3$ is empty, 

$\bullet$ if $s\ge 0$ and $d$ is even, $\dim \irond_C^3=(r-1)(s+1)-1$, and 

$\bullet$ if $s\ge 1$ and $d$ is odd, $\dim \irond_C^3=(r-1)s$. 
\end{theorem}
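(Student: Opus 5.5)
By \autoref{character of rank 3 and 4}, a rank $3$ quadric $q\in\Sigma_3(C,\ell)$ is the same as a pair $(\ell_1,F)$, where $\ell_1=(A,V_1)$ is a basepoint free pencil of some degree $a$, $F\ge B$ is an effective divisor of degree $d-2a$ with $L=A^{2}(F)$, and the $3$-dimensional space $W:=s_F\cdot \Sym^2V_1\subset H^0(C,L)$ is contained in $V$. My plan is therefore to replace $\irond^3_C$ by the variety of quadruples $(a;\ell_1,F,V)$ and to count dimensions stratum by stratum in $a$. The last coordinate satisfies $W\subseteq V\subseteq H^0(C,A^2(F))$. Since $q$ determines $(\ell_1,F)$ and conversely, the natural projection to $\irond^3_C$ is injective. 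Hence $\dim\irond^3_C$ is the maximum over $a$ of the dimensions of these strata.

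\textbf{Step 1: cohomology of $A^2(F)$.} For a general curve and a basepoint free pencil, the basepoint free pencil trick identifies the kernel of the Petri map $V_1\otimes H^0(K_C-A)\to H^0(K_C)$ with $H^0(K_C-2A)$. The Gieseker--Petri theorem then forces $h^1(A^2)=0$, hence $h^1(A^2(F))=0$ and $h^0(L)=d+1-g$. Since $V$ has dimension $r+1$, this gives the condition $r+1\le d+1-g$, that is $s\ge 0$. This already proves emptiness for $s\le -1$.

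\textbf{Step 2: dimension count on each stratum.} Fix $a$. The pencils $\ell_1$ vary in $G^1_a(C)$, which has dimension $2a-g-2$ by Brill--Noether; this requires $a\ge (g+2)/2$. The divisor $F$ varies in $C_{d-2a}$, of dimension $d-2a\ge 0$. The choices of $V$ form a Grassmannian $G\bigl(r-2,H^0(L)/W\bigr)$ of dimension $(r-2)(d+1-g-3-(r-2))=(r-2)s$. The total is
$$(d-g-2)+(r-2)s=(r-1)(s+1)-1,$$
independently of $a$. For $d$ even the admissible range contains $a=d/2$ with $F=0$, since $d\ge g+2$. This gives the claimed dimension for $d$ even, provided every stratum is nonempty of this dimension.

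\textbf{Step 3: the case $d$ odd.} This is the main obstacle. The naive count above does not depend on parity, yet the theorem predicts emptiness for $s=0$ and a drop by $r-2$ for $s\ge 1$. For $d$ odd we have $\deg F\ge 1$. I therefore expect an extra condition that the count in Step 2 ignored. The first candidate is the interaction of $F$ with the base locus $B$ of $V$: $F\ge B$, and $V(-F)$ must contain $W$. One should check whether this forces $V(-F)$ to be very restricted, for instance whether a general $V$ containing $W$ has the points of $F$ as base points. Imposing the base points $F$ on $V$ (with $\deg F\ge 1$) should cut the Grassmannian to $G(r-2,h^0(L(-F))-3)$ and force $V=V(-F)$. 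To handle this, I would refine the parametrization: first pass to the linear series $\ell(-F)\in G^r_{d-\deg F}(C)$. Its degree $d-\deg F$ is even, so the case $F=0$ applies to it with parameter $s-\deg F$. I would then add the $\deg F$ parameters for $F$. Maximizing at $\deg F=1$ gives
$$(r-1)s-1+1=(r-1)s.$$
For $s=0$ the residual parameter becomes $-1$, which returns emptiness via Step 1. I expect the genuinely delicate point to be verifying that no component with $F\not\le B$ survives. This is exactly the claim that rank $3$ quadrics containing $\varphi(C)$ force the extra base points. I would settle it by the correspondence in \autoref{character of rank 3 and 4}, checking that the ruling pencil cuts $A^2$ on the image curve, so that any excess $F$ consists of base points of $\ell$.
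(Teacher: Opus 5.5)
Your Steps 1 and 2 are sound. The proposal breaks at Step 3. You claim there that the points of $F$ must be base points of $\ell$, and this is false, so no refinement of \autoref{character of rank 3 and 4} will deliver it. That lemma only gives $F\ge B$, and $V(-F)$ may have dimension $<r+1$. Geometrically, the points of $F$ are the points of $C$ sent into the vertex $\PP^{r-3}$ of the rank $3$ quadric; they are not base points.

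Here is an explicit counterexample. Let $s=0$ and $d$ odd, and put $a=(d-1)/2$, so that $\rho(g,1,a)=r-3\ge 0$. Take a base point free pencil $(A,\langle\sigma_0,\sigma_1\rangle)\in G^1_a(C)$, a point $p\in C$ with $u\in H^0(C,\OO_C(p))$ vanishing at $p$, and set $L=A^{2}(p)$, $V=H^0(C,L)$.
\begin{itemize}
\item By your Step 1, $h^0(C,L)=d+1-g=r+1$ and $h^0(C,A^{2})=r$.
\item Hence $(L,V)\in G^r_d(C)$ and $p$ is not a base point.
\item Nevertheless, with $x_0=u\sigma_0^2$, $x_1=u\sigma_0\sigma_1$, $x_2=u\sigma_1^2$, the quadric $x_0x_2-x_1^2$ has rank $3$ and lies in $\ker\nu(L,V)$.
\end{itemize}
The smallest case is $(g,r,d)=(2,3,5)$. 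There $L=\omega_C^{2}(p)$ embeds $C$ as a quintic on the quadric cone $x_0x_2=x_1^2$, passing through the vertex at $p$. Hence $\irond^3_C\neq\emptyset$ for $s=0$ and $d$ odd. For $s\ge 1$ and $d$ odd, your own Step 2 count gives strata of dimension $(r-1)(s+1)-1>(r-1)s$. So the odd-$d$ bullets of the statement cannot be proved as stated.

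The paper's proof makes exactly the assumption of your Step 3. It records a point of $\irond^3_C$ as $(F,(A,W),V(-F))$ with $V(-F)\in G(r+1,H^0(C,A\otimes A))$, which silently places $F$ in the base locus of $V$. It then obtains $f+\rho(g,1,a)+(r-2)(2a-g-r)=(r-1)(s+1)-1-(r-2)f$ and asserts for odd $d$ that ``any valid linear series has at least one basepoint''. That assertion is the unjustified step.

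For even $d$ and for $s\le -1$, your argument and the paper's reach the same conclusions. Yours uses the more accurate bound $r+1\le h^0(C,A^{2}(F))=d+1-g$.

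What your Steps 1--2 actually prove is $\dim\irond^3_C=(r-1)(s+1)-1$ for every $s\ge 0$, independently of the parity of $d$. To complete this you need two checks:
\begin{itemize}
\item the range $\lceil (g+2)/2\rceil\le a\le\lfloor d/2\rfloor$ is nonempty when $s\ge 0$;
\item $(A,V_1,F,V)$ is recovered from $(\ell,q)$: $W$ is the span of $q$, $F$ is the base divisor of $W$, and $V_1$ is the pencil obtained through the conic $q=0$.
\end{itemize}
You should state and prove that corrected result rather than try to force a parity drop. This also undermines the odd-$d$ rows of \autoref{SSMRC rank 3}, which the paper deduces from this count.
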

\begin{proof}
According to \autoref{character of rank 3 and 4}, we have the alternative description
$$\irond_C^3=\{\textnormal{triplets }(F,(A,W),V(-F))\textnormal{ satisfying condition }(*)\},$$
where condition $(*)$ is the following four requirements: 

\textbf{(1)} $F\in C_f$, 

\textbf{(2)} $(A,W)\in G^1_a(C)$ is a basepoint free pencil, 

\textbf{(3)} $V(-F)\in G(r+1,H^0(A\otimes A))$ contains the image of $\Sym^2 W\to H^0(C,A\otimes A)$, and 

\textbf{(4)} $2a+f=d$. 
\vspace{0.2cm}

Let us count the dimension of $\irond_C^3$ using the latter description. First, since $C$ is Petri general, we have $h^0(C,A\otimes A)=2a+1-g$ according to Riemann-Roch. This requires, in particular, that $r+1=\dim_\CC V(-F)\le h^0(C,A\otimes A)=2a+1-g$, so $r\le 2a-g$. Consequently, $r\le d-g$ must hold and $\irond_C^3=\emptyset$ if $s\le -1$. 

Suppose henceforth that $s\ge 0$. If $d$ is odd, notice that $d=2a+f$ requires $f\ge 1$, so any valid linear series has at least one basepoint. Observe that $r\le 2a-g\le d-g-1$ must hold then, so the situation $s=0$ for $d$ odd gives an empty locus, $\irond_C^3=\emptyset$. For the rest of the cases, both $d$ odd with $s\ge 1$ and $d$ even with $s\ge 0$, with the remark that the Grassmannian $G(r-2,H^0(C,A\otimes A)/\Sym^2 W)$ parametrizes the spaces $V(-F)$ in question, we can simply count the moduli in order to get 
$$\underbrace{f}_{\textnormal{moduli of }F}+\underbrace{\rho(g,1,a)}_{\textnormal{moduli of }(A,W)}+\underbrace{(r-2)(2a-g-r)}_{\textnormal{moduli of the }r-2\textnormal{ other sections of }V(-F)}=(r-1)(s+1)-1-(r-2)f.$$
For $d$ even, $f\ge 0$ can be anything, while for $d$ odd, $f$ can be anything strictly positive. All these ensure that
$$\dim \irond_C^3=\begin{cases}
    (r-1)(s+1)-1,&\textnormal{ for }d\textnormal{ even and }s\ge 0, \\
    (r-1)s,&\textnormal{ for }d\textnormal{ odd and }s\ge 1.
\end{cases}$$
\end{proof}

\subsection{The proof of \autoref{SSMRC rank 3}}

The emptiness statements when $s\le -1$ or $s=0$ with $d$ odd follow directly from \autoref{incidence variety for rank 3}. As for the remaining statements, we concern ourselves only with the even $d$ case. Then odd $d$ follows from the even case, by adding any basepoint to the linear series in $Q^r_{d-1,3}(C)$. So let $d=2a$ and keep in mind the description of $\irond_C^3$ from above. 

We consider the forgetful map $\pi:\irond_C^3\to Q^r_{d,3}(C)$. The map $\pi$ is well defined in the specified range of the genus $g$ i.e. $g\ge r-1$ for $s=0$ and $g\ge r-2$ for $s\ge 1$, as then $q(g,r,d,3)=-1$ holds and the special behavior of $Q^r_{d,3}(C)$ in $G^r_d(C)$ is given by linear series $\ell=(L,V)$ for which the multiplication map $\nu(\ell):\Sym^2 V\to H^0\left(C,L^2\right)$ contains at least a quadric of rank $3$ in the kernel.

The map $\pi:\irond_C^3\to Q^r_{d,3}(C)$ is obviously surjective and already sets the desired dimensional upper bound $\dim Q^r_{d,3}(C)\le \dim \irond_C^3$. We show that, in the specified range of the genus $g$, there is equality by providing a point $(L,V)\in Q^r_{d,3}(C)$ in the image of a component of the top dimension over which the fiber $\pi^{-1}(L,V)$ is finite. This suffices in order to conclude that $\pi:\irond_C^3\to Q^r_{d,3}(C)$ is generically finite (at least over one component of the target) and thus $\dim Q^r_{d,3}(C)=\dim \irond_C^3$. 
\vspace{0.2cm}

If $a\le g+1$, then the generic $A\in W^1_{a}(C)$ satisfies $h^0(C,A)=2$ and for any other $B\in W^1_a(C)$ such that $A\otimes A=B\otimes B$, if there are any, the condition $h^0(C,B)\le 2$ holds. Indeed, one only needs to observe that 

$\bullet$ if $A\otimes A=B\otimes B$, then $A\otimes B^{-1}$ is a square root of $\OO_C$ and there are finitely many of these ($2^{2g}$ to be precise), and

$\bullet$ the condition $h^0(C,B)\ge 3$ is special in $W^1_a(C)$ i.e. $W^2_a(C)$ is properly contained in $W^1_a(C)$. 
\vspace{0.2cm}
\\
With these observations, let $A\in W^1_a(C)$ generic as above, let $W=H^0(C,A)$ so $(A,W)\in G^1_a(C)$ is complete and consider $L=A\otimes A$. For any vector space $V\in G(r+1,H^0(C,L))$ that contains the image of $\Sym^2 W$, a rank $3$ quadric in the kernel of $\nu(L,V)$ different from the one induced by $(A,W)$ would correspond to some $(B,W')\in G^1_a(C)$ with $A\otimes A=B\otimes B$ and $\Sym^2W'\injto V$. The generality of $A$ shows that there are finitely many such $B$'s, and each of them produces exactly one rank $3$ quadric, since $\dim W'=h^0(C,B)=2$ must hold. 
\vspace{0.2cm}

Now consider the second situation $a\ge g+2$ with $g\ge r-2$. If $L\in \Pic^{2a}(C)$ is a generic line bundle, we infer from \cite{Kadikoylu19} that $\Sigma_3(C,L)$ has the expected dimension $q(g,2a-g,2a,3)=2a-2g-2$. Given the alternative description of rank $3$ quadrics via \autoref{character of rank 3 and 4}, this means that $\PP H^0(C,L)\cong \PP^{2a-g}$ contains a $(2a-2g-2)$-dimensional family of $\PP^2$'s. If we ensure for some vector space $V\in G(r+1,H^0(C,L))$ that $\PP V$ intersects this family in finitely many of these $\PP^2$'s, then we are done. But this is immediate, after we fix one such $\PP^2\cong \PP\Sym^2 W$: since $g\ge r-2$, we know that $r+\left((2a-2g-2)+2\right)\le (2a-g)+2$, so the generic $\PP V$ containing $\PP\Sym^2 W$ intersects the family $\Sigma_3(C,L)\times \PP^2\subseteq \PP H^0(C,L)\cong \PP^{2a-g}$ in an at most $2$-dimensional locus. Consequently, there are finitely many other possible $\PP^2$'s from this family that are contained in $\PP V$, as desired. 

\subsection{The Strong Maximal Rank Conjecture in $\PP^3$}

We prove that SMRC holds in $\PP^3$, with the exceptions of an explicit list of cases, when the expected dimension is $\alpha(g,3,d)=0$ and yet the degeneracy locus is empty. Suppose $C$ is a general curve of genus $g\ge 1$. The gonality of the curve is $\gon(C)=\ceiling {g/2}+1$ and its Clifford index is $\Cliff(C)=\floor{\frac{g-1}{2}}$. 

\begin{proof}
[Proof of \autoref{SMRC r=3}]
Let $\ell=(L,V)$ be a $g^3_d$ on the curve and consider the associated multiplication map $$\nu(\ell):\Sym^2 V\to H^0\left(C,L^2\right).$$ 
Under the generality hypothesis $\rho(g,3,d)\ge 0$, one can easily check that the only instances when $10=\dim_\CC \Sym^2V\ge h^0\left(C,L^2\right)=2d+1-g$ are $(g,d)\in\{(1,4),(2,5),(4,6)\}$. These three cases fall in the situation described at \eqref{GL degree bound} in Section 2, so SMRC is true with the following exception: for $(g,d)=(2,5)$, the expected dimension is $\alpha(2,3,5)=0$, while the degeneracy locus in $G^3_5(C)$ is actually empty. 
\vspace{0.2cm}

All the remaining cases for quadrics and $r=3$ fall in the injective area, so $\dim_\CC\Sym^2 V=10\le 2d+1-g=h^0\left(C,L^2\right)$. Therefore the degeneracy locus $Q^3_d(C)$ parametrizes linear series $\ell=(L,V)\in G^3_d(C)$ for which the multiplication map $\nu(\ell):\Sym^2 V\to H^0\left(C,L^2\right)$ fails to be injective. Consider the incidence variety
$$\irond:=\left\{(\ell,q)\mid \ell\in Q^3_d(C)\textnormal{ and }q\in \PP \ker \nu(\ell)\right\}$$
which has the natural forgetful map $\pi:\irond\to Q^3_d(C)$. This is surjective, by definition. Since we work in $\PP^3$, the quadric $q\in \PP\ker \nu(\ell)$ has rank at most $4$. Both situations $\rk(q)=3$ and $\rk(q)=4$ fall in the hypotheses of \autoref{character of rank 3 and 4}, so we have the alternative description
$$\irond=\{\textnormal{ quadruplets }(B,(A_1,W_1),(A_2,W_2),V(-B))\textnormal{ satisfying condition }(*)\},$$
where condition $(*)$ means the following four requirements: 

\textbf{(1)} $B\in C_b$, 

\textbf{(2)} $(A_1,W_1)\in G^1_{a_1}(C)$ and $(A_2,W_2)\in G^1_{a_2}(C)$ are basepoint free pencils, 

\textbf{(3)} $V(-B)\in G(4,H^0(A_1\otimes A_2))$ contains the image of $W_1\otimes W_2\to H^0(C,A_1\otimes A_2)$, and 

\textbf{(4)} $a_1+a_2+b=d$. 
\vspace{0.2cm}

The situation when the quadric $q$ is of rank $3$ has been discussed in \autoref{SSMRC rank 3}. This sublocus is nonempty only if $d\ge g+3$, in which case there is an upper bound of 
$$2(d-g-2)-1$$
on its dimension. So let us focus on the situation when the quadric $q$ is of rank $4$. This means that the two pencils $(A_1,W_1)$ and $(A_2,W_2)$ have to be distinct, so the image of $W_1\otimes W_2\to H^0(C,A_1\otimes A_2)$ is $4$-dimensional. As a consequence, $V(-B)$ is uniquely determined by requirement \textbf{(3)}. Now we can simply count the moduli in order to get

$$\underbrace{b}_{\textnormal{moduli of }B}+\underbrace{\rho(g,1,a_1)}_{\textnormal{ moduli of }(A_1,W_1)}+\underbrace{\rho(g,1,a_2)}_{\textnormal{moduli of }(A_2,W_2)}=2(d-g-2)-b.$$
Since $b\ge 0$ can be anything, we deduce that the moduli for $\rk(q)=4$ is overall $2(d-g-2)$. Finally, the surjectivity of $\pi:\irond\to Q^3_d(C)$ implies the bound $\dim Q^3_d(C)\le \dim \irond=2(d-g-2)$. Since $Q^3_d(C)$ itself has a determinantal structure that imposes the lower bound $\dim Q^3_d(C)\ge 2(d-g-2)$ on all its irreducible components, we finally deduce that $Q^3_d(C)$ is of pure dimension $2(d-g-2)=\alpha(g,3,d)$.

Notice that $a_1\ge \gon(C)$ as well as $a_2\ge \gon(C)$ are required, so as soon as $d\ge 2\gon(C)$, then both $\irond$ and $Q^3_d(C)$ are nonempty. This is almost always in tune with the above dimension count, and the only exception is when $d=g+2$ with $g$ odd, in which case the expected dimension is $0$ and yet $d<2\gon(C)$, so the degeneracy locus $Q^3_d(C)$ is, in fact, empty. 
\end{proof}

\section{On the Unrepresentability of the Canonical Bundle as a Sum of $3$ Pencils}

In this section, we prove the $s\le -2$ part of \autoref{SSMRC rank 4}. The main result is the following 

\begin{theorem}\label{no quads of rk 4 generically}
Let $g,d\ge 1$ be integers with $\ceiling{g/2}+1\le d\le g-1$ and let $C$ be a general curve of genus $g$. For a general $A\in W^1_d(C)$, the multiplication map 
$$\nu\left(\omega_C\otimes A^{-1}\right):\Sym^2 H^0\left(C,\omega_C\otimes A^{-1}\right)\to H^0\left(C,\omega_C^2\otimes A^{-2}\right)$$
contains no quadrics of rank at most $4$ in the kernel. 
\end{theorem}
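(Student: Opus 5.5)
The plan is to reduce the statement, via \autoref{character of rank 3 and 4}, to a statement about writing $\omega_C$ as a sum of three pencils plus an effective divisor, and then to prove that this cannot happen for a general $A$. Put $L=\omega_C\otimes A^{-1}$. Since $A$ is general in $W^1_d(C)$ and $C$ is Petri general, $h^0(C,A)=2$ and $h^0(C,L)=g-d+1$. By \autoref{character of rank 3 and 4}, a quadric of rank at most $4$ in $\PP\ker\nu(L)$ gives basepoint free pencils $(A_1,V_1)\in G^1_{a_1}(C)$, $(A_2,V_2)\in G^1_{a_2}(C)$ (possibly equal, which covers rank $3$) and an effective divisor $F\in C_f$ with
$$\omega_C\cong A\otimes A_1\otimes A_2\otimes\OO_C(F),\qquad d+a_1+a_2+f=2g-2.$$
We may replace $A$ by its basepoint free part, which only enlarges $F$. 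So it suffices to show that the incidence variety
$$\mathcal{J}=\left\{(A,A_1,A_2,F)\in W^1_d(C)\times W^1_{a_1}(C)\times W^1_{a_2}(C)\times C_f \;\middle|\; A\otimes A_1\otimes A_2(F)\cong\omega_C\right\}$$
does not dominate $W^1_d(C)$, for every admissible triple $(a_1,a_2,f)$.

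Next comes the naive count. The source has dimension $\rho(g,1,d)+\rho(g,1,a_1)+\rho(g,1,a_2)+f$, and the condition imposes $g$ conditions in $\Pic^{2g-2}(C)$. Using $d+a_1+a_2=2g-2-f$, this gives
$$\dim\mathcal{J}\;\text{expected}=2(2g-2-f)-3g-6+f-g=-10-f<0.$$
So $\mathcal{J}$ should be empty, and in any case it should have dimension far below $\dim W^1_d(C)=2d-g-2$. The content of the theorem is that this count is honest: the sum map $W^1_{d}\times W^1_{a_1}\times W^1_{a_2}\times C_f\to\Pic^{2g-2}(C)$ has fiber over $\omega_C$ of dimension at most the expected one, or at least too small to dominate the first factor.

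To make the count honest I would degenerate $C$ to a chain of $g$ elliptic curves $E_1\cup\dots\cup E_g$ with general moduli and general attaching points, and use Eisenbud--Harris limit linear series. Suppose the statement failed for a general curve. Then after base change we get limits $A^{\lim},A_1^{\lim},A_2^{\lim}$ of the three pencils, together with a limit $F$, and $A^{\lim}$ can be chosen general among limit $g^1_d$'s. On each component $E_j$, with nodes $p_j$ and $q_j$, the aspects satisfy $A^{E_j}\otimes A_1^{E_j}\otimes A_2^{E_j}(F_j)\cong\omega^{E_j}$, where $\omega^{E_j}=\OO_{E_j}((2g-2)q_j)$ up to twist.

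Comparing vanishing orders at $p_j$ and $q_j$, additivity of vanishing sequences under tensor product gives the following. The vanishing sequence of $\omega$ increases by exactly one ramification step at each component, while each of the three pencils has adjusted Brill--Noether number $\rho_j\ge0$ on $E_j$. Summing over $j$ should give $\sum_j\bigl(\rho_j(A)+\rho_j(A_1)+\rho_j(A_2)\bigr)+f\le$ the ramification freed up by $\omega$ minus $g$. This recovers the inequality $\dim\mathcal{J}\le -10-f$ along the locus where $A^{\lim}$ is general, which is a contradiction. The bookkeeping on an elliptic component is standard: the equality $a+b=\deg$ in the vanishing orders of a pencil is allowed only when the line bundle is a torsion translate of $\OO(p_j-q_j)$, and this happens on at most one component per pencil unless the moduli are special.

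The main obstacle is this last step. The sum condition couples the three pencils on each component, so I must show that the single $g$-dimensional condition coming from $\omega_C$ really costs one unit of ramification on each component, simultaneously for the three pencils. The danger is that the three pencils "share" the one available cusp condition on some $E_j$. I would first try to rule this out by the generality of $p_j-q_j$ (non-torsion), which forbids two of the aspects from being simultaneously ramified at both nodes in a compatible way. If that fails, I would instead use the infinitesimal version: the tangent space to $W^1_a$ at $A_i$ is the annihilator of the image of the Petri map $\mu_{0,A_i}$. The Gieseker--Petri injectivity for each $A_i$ should then show that the differential of the sum map restricted to the fiber over $\omega_C$ has the expected rank, at least at a general point of any component dominating $W^1_d(C)$.
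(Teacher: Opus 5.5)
Your reduction to the incidence variety $\mathcal{J}$ is fine, but the step meant to carry the proof, namely that the naive count $-10-f$ is ``honest'', is unproved. It is also the wrong target, and your Petri fallback provably fails. Work with the pencils $(A,V),(A_1,V_1),(A_2,V_2)$ and the sum map $G^1_d(C)\times G^1_{a_1}(C)\times G^1_{a_2}(C)\times C_f\to\Pic^{2g-2}(C)$. At a point over $\omega_C$, the image of its differential is the annihilator in $H^1(C,\OO_C)=H^0(C,\omega_C)^\vee$ of
\[
\mathrm{Im}\,\mu_{0,V}\cap\mathrm{Im}\,\mu_{0,V_1}\cap\mathrm{Im}\,\mu_{0,V_2}\cap H^0\bigl(C,\omega_C(-F)\bigr)\subseteq H^0(C,\omega_C),
\]
where $\mu_{0,V}:V\otimes H^0(C,\omega_C\otimes A^{-1})\to H^0(C,\omega_C)$ is the Petri map, and similarly for $V_1,V_2$. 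Since $\omega_C\cong A\otimes A_1\otimes A_2(F)$, every product $s_F\,\sigma\sigma_1\sigma_2$ with $\sigma\in V$, $\sigma_1\in V_1$, $\sigma_2\in V_2$ lies in all four spaces. So this intersection has dimension at least $4$, and exactly $8$ whenever $V\otimes V_1\otimes V_2\to H^0(C,\omega_C(-F))$ is injective. Hence the fibre over $\omega_C$ is never cut out transversally, however Petri-general each pencil is. Corrected for this forced overlap, the count becomes $-2-f$. For $F=0$ this equals $(2d-g-2)+(g-2d)$, where $g-2d$ is the expected dimension of rank-$\le 4$ quadrics in $\PP\ker\nu(\omega_C\otimes A^{-1})$. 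The expected margin is therefore $2$, not $10$.

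The paper itself shows this overlap is not a technicality. For two pencils, the same naive count for $\omega_C\cong A_1\otimes A_2(D)$ predicts dimension $r-7$, whereas \autoref{deg locus in sum of pencils} shows the locus is nonempty of dimension $r-3$; there the overlap $s_D\cdot V_1\cdot V_2$ is generically $4$-dimensional. Your degeneration mechanism, in which the single condition from $\omega_C$ costs one unit on each elliptic component, does not distinguish two pencils from three. As sketched, it would equally ``prove'' the false bound $r-7$.

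On the chain, the excess is exactly the sharing you worry about. When all aspects on $E_j$ have the form $\OO_{E_j}(\alpha p_j+\beta q_j)$, the relation in $\Pic(E_j)$ imposes no condition. Non-torsion of $p_j-q_j$ only turns it into a matching of integer coefficients, which can hold. Moreover, only $A^{\lim}$ is general; $A_1^{\lim}$ and $A_2^{\lim}$ are unconstrained. So the missing step is essentially the whole theorem.

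The paper avoids dimension counts altogether. By irreducibility of $\hrond_{g,d}$, a single pair $(C,A)$ suffices. It takes $C\in\abs{H}$ and $A=\OO_C(E)$ on a K3 surface with $\Pic(S)=\ZZ H\oplus\ZZ E$, $H^2=2g-2$, $E^2=0$, $H\cdot E=d$. Having no $(-2)$-curves forces $h^0(S,H-2E)=0$, which identifies the kernel of $\nu(\omega_C\otimes A^{-1})$ with that of $\Sym^2H^0(S,H-E)\to H^0(S,2H-2E)$. A rank-$\le 4$ quadric there would split $H-E=E_1+E_2$ with $h^0(E_i)\ge 2$, and the lattice forbids this.
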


We begin with the following easy remark. If $\ker\nu\left(\omega_C\otimes A^{-1}\right)$ were to contain a quadric of rank at most $4$, then we would be able to write $\omega_C=A\otimes A_1\otimes A_2$ for some pencils $A_1\in W^1_{a_1}(C)$ and $A_2\in W^1_{a_2}(C)$, where $2g-2=d+a_1+a_2$. Since $C$ is (Brill-Noether) general, any such pencils must have $a_1,a_2\ge \ceiling{g/2}+1$, so we would need $2g-2\ge 3\ceiling{g/2}+3$ i.e. $g=10$ or $g\ge 12$. Therefore, the small genus cases are true for \textit{any} line bundle $A\in W^1_d(C)$. In what follows, we consider only the instances $g=10$ or $g\ge 12$. 

Since the Hurwitz scheme $\hrond_{g,d}$ is irreducible, we see that it suffices to prove the theorem for a single instance of a pair $(C,A)\in \hrond_{g,d}$. 

\begin{proof}
[Construction of $(C,A)$]
This is inspired from the proof of Theorem 12.5 in \cite{FR20}. Consider a polarized $K3$ surface $(S,H)$ with Picard rank $2$ and let $\Pic(S)=\ZZ\cdot H\oplus \ZZ\cdot E$, where $H^2=2g-2$, $E^2=0$ and $H\cdot E=d$. Take a general element $C\in \abs{H}$ and set $A:=\OO_C(E)$. Then $C$ satisfies the Brill-Noether theorem and $A$ is a degree $d$ pencil on $C$. 

Our first claim is that $S$ contains no $(-2)$-curves. Indeed, if $X$ were one such, we could write $X=\alpha H+\beta E$ in the Picard lattice, for some integers $\alpha$ and $\beta$. Then 
$$-2=(X)^2=(\alpha H+\beta E)^2=\alpha^2 (2g-2)+2\alpha \beta d\lthen -1=\alpha^2(g-1)+\alpha\beta d.$$
This would force $\alpha$ to be either $1$ or $-1$; switching signs if necessary, one could assume $\alpha=1$, and then would get $0=g+d\beta$, implying $d$ divides $g$, which is a contradiction. 

Observe now that $(H-2E)^2=2g-2-4d<0$. Since there are no $(-2)$-curves on $S$, it follows that any effective divisor on $S$ is nef and has nonnegative self intersection; in particular, we deduce that $h^0(S,H-2E)=0$. 

Now look at the short exact sequence 
$$0\to \OO_S(H-2E)\to \OO_S(2H-2E)\to \OO_C(2K_C-2A)\to 0,$$
of which a piece of the long exact sequence in cohomology is 
$$0\to H^0(\OO_S(2H-2E))\to H^0(\OO_C(2K_C-2A))\to H^1(\OO_S(H-2E))\to \dots$$
We deduce the natural injection $H^0(\OO_S(2H-2E))\injto H^0(\OO_C(2K_C-2A))$. Similarly, using that $E$ is an elliptic pencil and thus $h^0(S,E)=2$, $h^1(S,E)=0$ and $h^2(S,E)=0$, taking cohomology in the short exact sequence 
$$0\to \OO_S(-E)\to \OO_S(H-E)\to \OO_C(K_C-A)\to 0$$
gives the natural isomorphism $H^0(\OO_S(H-E))\cong H^0(\OO_C(K_C-A))$. We have a commutative diagram 
\[\begin{tikzcd}
	0 & {I_{S,H-E}(2)} & {\Sym^2H^0(\OO_S(H-E))} & {H^0(\OO_S(2H-2E))} \\
	0 & {I_{C,K_C-A}(2)} & {\Sym^2H^0(\OO_C(K_C-A))} & {H^0(\OO_C(2K_C-2A))}
	\arrow[from=1-1, to=1-2]
	\arrow[from=1-2, to=1-3]
	\arrow["\cong", from=1-2, to=2-2]
	\arrow[from=1-3, to=1-4]
	\arrow["\cong", from=1-3, to=2-3]
	\arrow[hook, from=1-4, to=2-4]
	\arrow[from=2-1, to=2-2]
	\arrow[from=2-2, to=2-3]
	\arrow[from=2-3, to=2-4]
\end{tikzcd}\]
which tells us that it suffices to show that the multiplication map 
$$\Sym^2 H^0(\OO_S(H-E))\to H^0(\OO_S(2H-2E))$$
contains no rank $4$ elements in its kernel. Assuming it does, this means that there exist divisor classes $E_1$ and $E_2$ on $S$ with $H-E=E_1+E_2$, $h^0(S,E_1)\ge 2$ and $h^0(S,E_2)\ge 2$, for which we can write 
$$\begin{cases}
    E_1=\alpha_1 H+\beta_1E\\
    E_2=\alpha_2 H+\beta_2 E
\end{cases},$$
where $\alpha_i,\beta_i$ are integers. Notice that $E$ is nef and $E\cdot E_1=\alpha_1d\ge 0$, so $\alpha_1\ge 0$ and similarly $\alpha_2\ge 0$. Moreover, we must have $\alpha_1+\alpha_2=1$, so let us assume $\alpha_1=1$ and $\alpha_2=0$. This gives $E_2=\beta_2 E$, whence $\beta_2\ge 1$, which together with $\beta_1+\beta_2=-1$ implies $\beta_1\le -2$. But, at the same time, this would mean $2\le h^0(S,E_1)\le h^0(S,H-2E)=0$, and that is a contradiction. 
\end{proof}

A particular range when the genericity of $A\in W^1_d(C)$ in the above \autoref{no quads of rk 4 generically} can be turned into a statement regarding \textit{all} pencils $A\in W^1_d(C)$ is when the genus $g=2a$ is even and $d=a+1$ is the minimum degree. Indeed, when $C$ is general, $W^1_{a+1}(C)$ is reduced and finite, so a monodromy argument together with, again, the irreducibility of the Hurwitz scheme $\hrond_{2a,a+1}$ allow to pass the same statement regarding the inexistence of quadrics of rank at most $4$ to all pencils in $W^1_{a+1}(C)$:

\begin{corollary}\label{no quads of rk 4 if minimal pencil}
Let $a\ge 1$ and let $C$ be a general curve of genus $2a$. For any $A\in W^1_{a+1}(C)$, the multiplication map 
$$\nu\left(\omega_C\otimes A^{-1}\right):\Sym ^2 H^0\left(C,\omega_C\otimes A^{-1}\right)\to H^0\left(C,\omega_C^2\otimes A^{-2}\right)$$
contains no quadrics of rank at most $4$ in the kernel. 
\end{corollary}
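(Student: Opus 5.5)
The plan is to deduce the corollary from \autoref{no quads of rk 4 generically} by a monodromy argument on the finite set $W^1_{a+1}(C)$. First, dispose of the degenerate cases. If $a+1<\ceiling{g/2}+1$ the statement is vacuous, but here $g=2a$, so $d=a+1=\ceiling{g/2}+1$. We also need $d\le g-1$, that is $a\ge 2$. For $a=1$ the curve has genus $2$, and $\omega_C\otimes A^{-1}$ is trivial of degree $0$, so $\Sym^2$ of a one-dimensional space injects and there is nothing to prove. The remark before the proof of \autoref{no quads of rk 4 generically} already settles every genus other than $g=10$ and $g\ge 12$ for \emph{all} pencils, so we only treat $a=5$ or $a\ge 6$. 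In this range the hypotheses $\ceiling{g/2}+1\le d\le g-1$ of \autoref{no quads of rk 4 generically} hold.

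Next, set up the global family. Let $\hrond_{2a,a+1}$ be the Hurwitz scheme parametrizing pairs $(C,A)$ with $A\in W^1_{a+1}(C)$, and let $\pi:\hrond_{2a,a+1}\to\mrond_{2a}$ be the forgetful map. Over a dense open subset $U\subseteq\mrond_{2a}$ of Petri general curves, $W^1_{a+1}(C)$ is reduced and finite, since $\rho(2a,1,a+1)=0$. Hence $\pi$ is finite étale over $U$. Consider the bad locus
$$Z=\{(C,A)\in\pi^{-1}(U)\mid \ker\nu(\omega_C\otimes A^{-1})\textnormal{ contains a quadric of rank}\le 4\}.$$
This locus is closed. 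Over $\pi^{-1}(U)$ we have $h^0(\omega_C\otimes A^{-1})=h^1(A)=g-a=a$, which is constant, so the spaces $H^0(\omega_C\otimes A^{-1})$ and $H^0(\omega_C^2\otimes A^{-2})$ form vector bundles and $\nu$ is a bundle map. The locus $\Sigma_4\cap\PP\ker\nu$ is closed in the projectivized bundle, and its image under the proper projection is closed.

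Now \autoref{no quads of rk 4 generically} says that $Z$ misses the generic point of $\hrond_{2a,a+1}$. Since $\hrond_{2a,a+1}$ is irreducible, $Z$ is a proper closed subset and therefore $\dim Z<\dim\hrond_{2a,a+1}=\dim\mrond_{2a}$. Because $\pi$ is finite, $\pi(Z)$ is a proper closed subset of $U$. For $C$ outside $\pi(Z)$, no pencil $A\in W^1_{a+1}(C)$ lies in $Z$, which is exactly the claim. Here "monodromy" amounts to the irreducibility of the étale cover: it guarantees that the generic statement over the single irreducible component applies to every sheet.

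The main point to check carefully is the closedness of $Z$ together with the irreducibility of $\hrond_{2a,a+1}$ (a classical result). One subtlety is that the proof of \autoref{no quads of rk 4 generically} constructs one pair $(C,A)$ from a K3 surface. That pair must lie in the open set where $C$ is Petri general, so that it gives a point of $\pi^{-1}(U)\setminus Z$. This follows because the openness argument gives the conclusion for the general point of the irreducible $\hrond$, and the general point lies over $U$.
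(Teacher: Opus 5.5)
Your argument is correct and matches the paper's proof: the paper deduces the corollary from the generic statement using the finiteness of $W^1_{a+1}(C)$, the irreducibility of $\hrond_{2a,a+1}$ and a monodromy argument, and you make the same argument explicit through the closed bad locus $Z$ and the finite map $\pi$. Your last paragraph is unnecessary. You only use the conclusion of the generic theorem at the generic point of $\hrond_{2a,a+1}$, so it does not matter whether the K3 curve is Petri general, and your sentence there does not actually establish that it is.
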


Having in mind the alternative description of the existence of a quadric of rank $4$ in the kernel of the multiplication map, we also obtain \autoref{sum of 3 pencils}:

\begin{corollary}\label{sum of 3 pencils with a minimal}
Let $a\ge 1$. The canonical bundle of a general smooth curve $C$ of genus $2a$ cannot be written as a sum $\omega_C=A_1\otimes A_2\otimes A_3$ where $A_i$ are line bundles with $h^0(C,A_i)\ge 2$ for $i\in \{0,1,2\}$, such that one of them is of minimal degree (i.e. a pencil of degree $a+1$). 
\end{corollary}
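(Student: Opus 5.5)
We argue by contradiction and reduce to \autoref{no quads of rk 4 if minimal pencil}. Suppose $C$ is a general curve of genus $2a$ and $\omega_C=A_1\otimes A_2\otimes A_3$ with $h^0(C,A_i)\ge 2$ for each $i$. Relabel so that $A_1=A$ is a pencil of degree $a+1$, so $A\in W^1_{a+1}(C)$. The goal is to produce a quadric of rank at most $4$ in $\ker\nu(\omega_C\otimes A^{-1})$, which the corollary forbids.

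First we pass to basepoint free pencils. For $i=2,3$, choose a two-dimensional subspace $W_i'\subseteq H^0(C,A_i)$. Let $F_i$ be its base locus, set $B_i:=A_i(-F_i)$, and let $W_i\subseteq H^0(C,B_i)$ be the induced basepoint free pencil. Then
$$\omega_C\otimes A^{-1}=B_2\otimes B_3\otimes \OO_C(F),\qquad F:=F_2+F_3\ge 0.$$
Put $L:=\omega_C\otimes A^{-1}$ and $V:=H^0(C,L)$. We now apply \autoref{character of rank 3 and 4} to the complete series $(L,V)$, with $B$ its base locus. Two conditions must be checked.

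\textbf{(i)} We need $F\ge B$. Since $C$ is general, $A$ is a general-type minimal pencil. The residual $K_C-A$ is basepoint free: a base point $p$ would give $h^0(A+p)=h^0(A)+1=3$, i.e. a $g^2_{a+2}$, and $\rho(2a,2,a+2)=2a-3(a)<0$ for $a\ge 1$, so this is impossible. Hence $B=0$ and the condition holds trivially.

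\textbf{(ii)} We need $V(-F)$ to contain the image of $W_2\otimes W_3\to H^0(B_2\otimes B_3)\cong H^0(L(-F))$. This is automatic because $V$ is complete: multiplying by a section of $\OO_C(F)$ sends $H^0(B_2\otimes B_3)$ into $H^0(L)$, landing in $V(-F)$.

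The lemma then gives a rank $\le 4$ quadric in $\PP\ker\nu(L,V)$. Concretely it is $s\sigma_0\tau_0\otimes s\sigma_1\tau_1-s\sigma_0\tau_1\otimes s\sigma_1\tau_0$, which has rank $3$ if $B_2\cong B_3$ with $W_2=W_3$, and rank $4$ otherwise. This contradicts \autoref{no quads of rk 4 if minimal pencil} and proves the theorem. The edge cases where $\omega_C\otimes A^{-1}$ is too small (for instance $a=1$) are vacuous by the degree count given after \autoref{no quads of rk 4 generically}.

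The real content is already in the earlier corollary, specifically the monodromy step extending the K3 construction from a general $A$ to every $A\in W^1_{a+1}(C)$. That step is where I expect the main difficulty. It needs $W^1_{a+1}(C)$ to be finite and reduced and the monodromy on it to be transitive, which follows from the irreducibility of the Hurwitz scheme $\hrond_{2a,a+1}$. Within the present deduction, the only delicate point is the base-point bookkeeping in (i). Even that is harmless: the completeness of $V$ makes condition (ii) automatic, and the explicit quadric above lies in the kernel regardless of base points.
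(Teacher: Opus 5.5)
Your proposal is correct and follows the paper's route. The paper likewise gets the statement in one line: a decomposition $\omega_C=A\otimes A_2\otimes A_3$ with $A\in W^1_{a+1}(C)$ yields, via \autoref{character of rank 3 and 4}, a quadric of rank at most $4$ in $\ker\nu(\omega_C\otimes A^{-1})$, which contradicts \autoref{no quads of rk 4 if minimal pencil}. Your additional bookkeeping just makes explicit what the paper leaves implicit: stripping base points from the other two pencils, showing $\omega_C\otimes A^{-1}$ is base point free via $\rho(2a,2,a+2)<0$, and noting that the completeness of $V$ makes the containment condition automatic.
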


Finally, we address the implications of this discussion in \autoref{SSMRC} for $k=4$. Let again $C$ be a general curve of genus $g$ and $(L,V)\in G^r_d(C)$ be any line bundle on it, where $r,d\ge 1$ are integers such that $g-d+r\ge 2$. \autoref{SSMRC} predicts that $\ker\nu(L,V)$ contains no quadrics of rank $4$. Since $\dim\abs{L}-d+g\ge r-d+g\ge 2$ and also since $\nu(L,V)$ factors through $\nu(L)$, 
\[\begin{tikzcd}
	{\Sym^2 V} & {\Sym^2 H^0(C,L)} & {H^0\left(C,L^2\right),}
	\arrow[hook, from=1-1, to=1-2]
	\arrow["{\nu(L,V)}"', curve={height=18pt}, from=1-1, to=1-3]
	\arrow["{\nu(L)}", from=1-2, to=1-3]
\end{tikzcd}\]
we see that it suffices to check this statement when $V=H^0(C,L)$ i.e. the linear series is complete. We do that in the following two instances. 
\vspace{0.2cm}

$\bullet$ When $d\le g+2$, if $\ker\nu(L)$ were to have a quadric of rank $4$, we would be able to write $L=A_1\otimes A_2$ for some pencils $A_1\in W^1_{a_1}(C)$ and $A_2\in W^1_{a_2}(C)$, where $a_1+a_2=d$. But, again, the generality of $C$ requires $a_1,a_2\ge \ceiling{g/2}+1$, so the bound $d\le g+2$ forces $g=2a$ be even, $a_1=a_2=a+1$ and $d=2a+2$. Since $h^0\left(\omega_C\otimes L^{-1}\right)\ge 2$, we derive a contradiction from \autoref{sum of 3 pencils with a minimal}. 

$\bullet$ When $d\le g+3$ and $g=2a$ is even, if $\ker\nu(L)$ were to have a quadric of rank $4$, the same argument would force $d=2a+3$ and $L=A_1\otimes A_2$ for some $A_1\in W^1_{a+1}(C)$ and $A_1\in W^1_{a+2}(C)$. This is a contradiction according to the same \autoref{sum of 3 pencils with a minimal}. 

\section{The Stratified Strong Maximal Rank Conjecture for Rank $4$ and $s\ge -1$}

\subsection{Varieties of secant divisors}

We begin this section by recalling some useful results concerning the varieties of secant divisors to a curve. Let $C$ be a smooth curve of genus $g$ and consider a linear series $\ell=(L,V)\in G^r_d(C)$. For integers $0\le f<e$, we have the determinantal variety 
$$V^{e-f}_e(\ell)=\{D\in C_e\mid \dim \ell(-D)\ge r-e+f\}$$
that parametrizes effective divisors of degree $e$ on $C$ that impose at most $e-f$ conditions on the linear series $\ell$. The determinantal structure tells us that every irreducible component of $V^{e-f}_e(\ell)$ is of dimension at least $e-f(r+1-e+f)$. In \cite{Farkas08}, the upper dimensional bound 
$$\dim \{(\ell,D)\in G^r_d(C)\times C_e\mid D\in V^{e-f}_e(\ell)\}\le \rho(g,r,d)+e-f(r+1-e+f)$$
is proven, under the hypothesis that $C$ is of general moduli. As a consequence, if $V^{e-f}_e(\ell)$ is nonempty for the general series $\ell\in G^r_d(C)$, then $V^{e-f}_e(\ell)$ is of pure dimension $e-f(r+1-e+f)$ for the general $\ell\in G^r_d(C)$. 

The upper bound mentioned above also gives the following
\begin{proposition}\label{super special secant divisors}
Let $h\ge 0$ be an integer and $C$ a general curve of genus $g$. Then
$$\dim\left\{(\ell,\OO_C(D))\mid \ell\in G^r_d(C),\textnormal{ }D\in V^{e-f}_e(\ell)\textnormal{ and }h^0(C,D)\ge h+1\right\}\le$$
$$\le \rho(g,r,d)+e-f(r+1-e+f)-h.$$
\end{proposition}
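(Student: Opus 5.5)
We consider the incidence variety
$$\mathcal{X}=\left\{(\ell,D)\in G^r_d(C)\times C_e\mid D\in V^{e-f}_e(\ell)\right\}$$
together with its sublocus $\mathcal{X}_h=\{(\ell,D)\in\mathcal{X}\mid h^0(C,D)\ge h+1\}$. There is a natural map
$$\psi:\mathcal{X}_h\to \{(\ell,\OO_C(D))\}\subseteq G^r_d(C)\times \Pic^e(C),\qquad (\ell,D)\mapsto (\ell,\OO_C(D)).$$
The locus in the statement is exactly the image of $\psi$. The plan is to bound its dimension by the dimension of $\mathcal{X}_h$ minus the dimension of the fibers of $\psi$, and then to use Farkas' bound $\dim\mathcal{X}\le\rho(g,r,d)+e-f(r+1-e+f)$.

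\textbf{Step 1: the fibers.} Fix a point $(\ell,M)$ in the image, with $M=\OO_C(D_0)$ and $h^0(C,M)\ge h+1$. The fiber of $\psi$ over it is
$$\{D\in\abs{M}\mid D\in V^{e-f}_e(\ell)\}.$$
I claim this is all of $\abs{M}$, so that $\psi$ has fibers $\abs{M}\cong\PP^{h^0(M)-1}$ of dimension at least $h$. To check the claim, note that the condition $\dim\ell(-D)\ge r-e+f$ concerns the space $V\cap H^0(L(-D))$, and this space does change as $D$ moves in its linear system. So the first thing I would try is to rewrite the condition in terms of $M$ only: $D$ imposes at most $e-f$ conditions on $V$ exactly when the map $V\to H^0(L\otimes\OO_D)$ has rank $\le e-f$. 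Equivalently, via the geometric Riemann--Roch/secant picture, $\varphi_\ell(D)$ spans a $\PP^{e-f-1}$. This is not obviously constant along $\abs{M}$, so the claim may fail, and I would need a substitute.

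\textbf{Step 1, fallback.} Instead of claiming the whole fiber is $\abs{M}$, I would bound the fiber from below by a dimension count inside the fiber. Over $\abs{M}$, the locus $\{D\in\abs{M}\mid \mathrm{rk}(V\to H^0(L|_D))\le e-f\}$ is a degeneracy locus in $\PP^{h^0(M)-1}$, so each of its components has codimension at most $f(r+1-e+f)$. Hence its dimension is at least $h-f(r+1-e+f)$, provided it is nonempty, and it is nonempty since it contains $D_0$. This only gives
$$\dim\operatorname{im}\psi\le\dim\mathcal{X}_h-h+f(r+1-e+f),$$
which is weaker than what the statement asks for.

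\textbf{Step 2: the intended route.} The fallback loses too much, so the argument I would actually aim for is to run Farkas' upper bound relatively over the Brill--Noether locus of the divisor classes. The idea is to treat the pair $(\ell,\OO_C(D))$ itself as the parameter and to observe that $D\in V^{e-f}_e(\ell)$ is equivalent to the inclusion $\ell(-D)\subseteq \ell$ with $\dim\ell(-D)\ge r-e+f$. Here $\ell(-D)=V\cap H^0(L\otimes M^{-1})$ depends only on $M$ once one fixes the section of $M$ defining $D$, and this is precisely the point to handle. Alternatively, one can apply Farkas' argument, namely the degeneration to a flag curve with limit linear series, to the triple $(\ell, D, \text{extra } h \text{ sections of } \OO_C(D))$, as a refined Brill--Noether count on a pointed/flag curve. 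In that count the additional condition $h^0(D)\ge h+1$ should cost exactly $h$ further conditions compared with the count for $\mathcal{X}$, while the map to divisor classes has $h$-dimensional fibers.

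\textbf{Main obstacle.} The hard step is making the fiber $\abs{M}$ count precisely: showing that the full $h$-dimensional linear system, or enough of it, lies in $V^{e-f}_e(\ell)$, or else redoing Farkas' limit linear series estimate with the extra $h^0(D)$ condition.
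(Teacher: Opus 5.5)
Your proposal does not reach a proof. You drop the Step~1 claim, the fallback only gives the weaker bound $\dim\mathcal{X}_h-h+f(r+1-e+f)$, and Step~2 is a programme with no argument that the condition $h^0(C,D)\ge h+1$ costs exactly $h$.

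However, the point where you got stuck is exactly where the paper's one-line proof is too quick, and your doubt is justified. The paper simply asserts that the fibre of $(\ell,D)\mapsto(\ell,\OO_C(D))$ over a point with $h^0(C,D)\ge h+1$ contains all of $\abs{D}$. This is correct when $\ell=(L,H^0(C,L))$ is complete, because then $\dim\ell(-D)=h^0(C,L(-D))-1$ depends only on $\OO_C(D)$. In that case your Step~1 claim holds verbatim, and together with Farkas' estimate $\dim\mathcal{X}\le\rho(g,r,d)+e-f(r+1-e+f)$ it is the whole proof. For an incomplete $\ell=(L,V)$, the subspace $V\cap H^0(C,L(-D))\subseteq H^0(C,L)$ really does move as $D$ moves in $\abs{D}$. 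In that case the statement itself is false, so no version of Step~2 can succeed.

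For a counterexample, take $g=1$, $r=2$, $d=4$, $e=2$, $f=1$, $h=1$. Then $\rho(1,2,4)=4$ and the claimed bound is $4+2-2-1=3$. A general $\ell\in G^2_4(C)$ is the projection of the elliptic normal quartic $C\subset\PP^3$ from a general point $p$, i.e. a binodal plane quartic. Each node gives $x+y\in V^1_2(\ell)$, and every $D\in C_2$ has $h^0(C,D)=2$. Hence the locus in the statement dominates $G^2_4(C)$ and has dimension at least $4>3$. The fibre over $(\ell,\OO_C(x+y))$ is the single divisor $x+y$, not $\abs{x+y}\cong\PP^1$. Indeed, the other members of $\abs{x+y}$ are cut out by the other lines of the same ruling on the quadric of the pencil through $p$, and these lines miss $p$.

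The same mechanism works in higher genus: choose $e\ge g+h$, so that $h^0(C,D)\ge h+1$ is automatic, and an incomplete $\ell$ with secant divisors. For example, take $g=2$, $r=4$, $d=7$, $e=3$, $f=1$, $h=1$. A general such curve in $\PP^4$ has $\binom{5}{3}-2\cdot 3=4$ trisecant lines by Berzolari's formula, and every $D\in C_3$ moves in a pencil. This gives dimension at least $\rho(2,4,7)=7$ against the claimed $6$.

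So the proposition needs the hypothesis that $\ell$ is complete. This is harmless for the paper: in the upper bound for $\grond_{C,a_1,a_2}$ and its reuse for $r=4$, the series $B_2=\omega_C\otimes A_2^{-1}$ is taken with all of $H^0(C,B_2)$. With that hypothesis your Step~1 is a complete proof, and Step~2 is unnecessary.
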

\begin{proof}
One only needs to look at the map that sends an $e$-secant divisor $D$ that imposes at most $e-f$ conditions on a linear series $\ell$ to its corresponding line bundle $\OO_C(D)$. This is 
$$\left\{(\ell,D)\in G^r_d(C)\times C_e\mid D\in V^{e-f}_e(\ell)\right\}\longrightarrow \left\{(\ell,\OO_C(D))\mid \ell\in G^r_d(C)\textnormal{ and }D\in V^{e-f}_e(\ell)\right\}.$$
Over the sublocus where $h^0(C,D)\ge h+1$, the fibers of this map contain the complete linear series $\abs{D}$, so they are all at least $h$-dimensional. This settles the claimed upper bound. 
\end{proof}

\subsection{Special tensors of pencils}

Having in mind the alternative description of rank $4$ quadrics that appear in the kernel of the multiplication maps given by \autoref{character of rank 3 and 4}, we continue with the analysis of the varieties $W^1_{a_1}(C)\times W^1_{a_2}(C)$, where $C$ is a general genus $g$ curve and $\ceiling {g/2}+1\le a_1\le a_2\le g$ are integers that give a splitting $d=a_1+a_2$. From the Riemann-Roch theorem, we have 
$$h^0(C,A_1\otimes A_2)-h^0\left(C,\omega_C\otimes A_1^{-1}\otimes A_2^{-1}\right)=d+1-g.$$
For any line bundle $A_1\in W^1_{a_1}(C)$, we can choose arbitrary points $p_1,p_2,\dots,p_{a_2-a_1}$ on the curve $C$ and set $A_2=A_1(p_1+\dots+p_{a_2-a_1})$. For such a pair $(A_1,A_2)$, we have $h^0\left(C,\omega_C\otimes A_1^{-1}\otimes A_2^{-1}\right)=0$ since $C$ is (Petri) general. Therefore, the minimum value $h^0(C,A_1\otimes A_2)=d+1-g$ is attained in all irreducible components of $W^1_{a_1}(C)\times W^1_{a_2}(C)$. 
\vspace{0.2cm}

The natural question to follow revolves around the stratification of  $W^1_{a_1}(C)\times W^1_{a_2}(C)$ with respect to the value of $h^0(C,A_1\otimes A_2)$ when $A_1\in W^1_{a_1}(C)$ and $A_2\in W^1_{a_2}(C)$. \autoref{deg locus in sum of pencils} below gives a complete description of the first stratum, that is when $h^0(C,A_1\otimes A_2)\ge d+2-g$ or equivalently $h^0\left(C,\omega_C\otimes A_1^{-1}\otimes A_2^{-1}\right)\ge 1$ attains the next possible value. The second stratum, the locus where $h^0\left(C,\omega_C\otimes A_1^{-1}\otimes A_2^{-1}\right)\ge 2$, is expected to be empty and we proved this expectation when the genus $g=2a$ is even and $a_1=a+1$ is minimal (see the previous section). 

\begin{theorem}\label{deg locus in sum of pencils}
Let $g,r,d\ge 1$ be integers such that $r\ge 3$, $d=g+r-1$ and $\rho(g,r,d)\ge 0$. Let $C$ be a general curve of genus $g$ and consider any splitting $d=a_1+a_2$, where $\ceiling {g/2}+1\le a_1\le a_2\le g$. It is known that $G^1_{a_1}(C)\times G^1_{a_2}(C)$ is a variety of dimension $2(r-3)$. Then the locus 
$$\grond_{C,a_1,a_2}=\{\left((A_1,V_1),(A_2,V_2)\right)\in G^1_{a_1}(C)\times G^1_{a_2}(C)\mid h^0(C,A_1\otimes A_2)\ge r+1\}$$
is nonempty and of dimension $r-3$. Furthermore,

$\bullet$ if $\rho(g,1,a_1)\ge 1$, then $\grond_{C,a_1,a_2}$ contains a component $\Sigma$ of dimension $r-3$ that dominates $G^1_{a_1}(C)$ and whose general point $\left((A_1,V_1),(A_2,V_2)\right)\in \Sigma$ satisfies $h^0(C,A_1\otimes A_2)=r+1$, and 

$\bullet$ if $\rho(g,1,a_1)=0$ i.e. $g=2a$ is even and $a_1=a+1$, then $\grond_{C,a+1,a_2}$ surjects onto $G^1_{a+1}(C)$ and the fiber over each $(A_1,V_1)\in G^1_{a+1}(C)$ is a locus of pure dimension $r-3$ in $G^1_{a_2}(C)$, such that for every point $(A_2,V_2)$ in this locus we have $h^0(C,A_1\otimes A_2)=r+1$. 
\end{theorem}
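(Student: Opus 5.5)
We translate the condition into a secant condition on the residual series $\omega_C\otimes A_1^{-1}$, and then combine Farkas's upper bound on secant loci with the determinantal lower bound. Since $d=g+r-1$, Riemann--Roch gives
$$h^0(C,A_1\otimes A_2)\ge r+1\iff h^0\left(C,\omega_C\otimes A_1^{-1}\otimes A_2^{-1}\right)\ge 1.$$
So the locus $\grond_{C,a_1,a_2}$ consists of pairs for which $A_2\cong\omega_C\otimes A_1^{-1}(-D)$ for some effective divisor $D\in C_e$ with $e=2g-2-d=g-r-1$. The first check is the dimension count $\dim G^1_{a_1}(C)\times G^1_{a_2}(C)=\rho(g,1,a_1)+\rho(g,1,a_2)=2d-2g-4=2(r-3)$.

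\textbf{Reformulation.} Fix $(A_1,V_1)$. Whenever $h^0(A_1)=2$, which is all of $G^1_{a_1}(C)$ if $\rho(g,1,a_1)=0$ and the general point otherwise, the series $\ell_{A_1}=\abs{\omega_C\otimes A_1^{-1}}$ is a complete $g^{g-a_1}_{2g-2-a_1}$. The requirement $h^0(A_2)\ge 2$ says exactly that $\dim\ell_{A_1}(-D)\ge 1$, that is, $D\in V^{e-f}_e(\ell_{A_1})$ with $f=a_1-r$. The expected dimension of this secant locus is $e-2f=r-3-\rho(g,1,a_1)$. Consider the incidence
$$\{(A_1,D)\mid D\in V^{e-f}_e(\ell_{A_1})\}$$
inside the relative secant variety over the family of series $\ell_{A_1}$, which is parametrized by $G^1_{a_1}(C)$ via the Serre-dual identification. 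Farkas's bound then gives it dimension at most $\rho(g,1,a_1)+e-2f=r-3$. Every component of each fiber has dimension at least $e-2f$, by the determinantal structure. The boundary case $a_1=r-1$ gives $f=-1$, where the condition is automatic, and I would treat it separately.

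\textbf{From $(A_1,D)$ to $(A_1,A_2)$.} The map $(A_1,D)\mapsto (A_1,\omega_C\otimes A_1^{-1}(-D))$ is surjective onto $\grond_{C,a_1,a_2}$, up to the choice of $V_2$ when $h^0(A_2)\ge 3$, which Petri generality controls since $a_2\le g$. The fibre over a pair is $\abs{\omega_C\otimes A_1^{-1}\otimes A_2^{-1}}$. By \autoref{super special secant divisors}, the stratum where $h^0(C,\OO_C(D))\ge h+1$ has dimension at most $r-3-h$. The fibres are therefore finite over a dense open part, and the image strata with $h^0(A_1\otimes A_2)\ge r+2$ are small. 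This gives $\dim\grond_{C,a_1,a_2}\le r-3$, and shows that on any $(r-3)$-dimensional component the general point has $h^0(A_1\otimes A_2)=r+1$.

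\textbf{Nonemptiness and the two bullets; the main obstacle.} The main obstacle is nonemptiness, and with it the lower bound. I would get this from the ACGH secant-class computation: the class of $V^{e-f}_e(\ell)$ in $C_e$ is a nonzero positive combination of $x^i\theta^j$ whenever the expected dimension $e-2f\ge 0$, so the secant locus is nonempty for every $\ell_{A_1}$. If this fails, I would construct the pair on a $K3$ surface as in the proof of \autoref{no quads of rk 4 generically}.

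Granting nonemptiness, consider the two cases. If $\rho(g,1,a_1)=0$, each $\ell_{A_1}$ has nonempty secant locus of pure dimension $e-2f=r-3$, by the lower bound together with the fibrewise upper bound coming from Farkas's estimate. This yields the surjection onto the finite set $G^1_{a+1}(C)$ with pure $(r-3)$-dimensional fibres, and the equality $h^0(A_1\otimes A_2)=r+1$ everywhere. For the latter I would use \autoref{sum of 3 pencils with a minimal}: the value $h^0(\omega_C\otimes A_1^{-1}\otimes A_2^{-1})\ge 2$ would write $\omega_C$ as a sum of three pencils, one of them minimal. If $\rho(g,1,a_1)\ge 1$, nonemptiness over the general $A_1$ gives fibres of dimension at least $r-3-\rho(g,1,a_1)$. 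Hence some component dominates $G^1_{a_1}(C)$ and has dimension at least $r-3$, so exactly $r-3$.
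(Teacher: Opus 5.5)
\textbf{Gap 1: nonemptiness is not established.} Your reduction follows the paper: Serre duality turns the condition into a secant condition on $\abs{\omega_C\otimes A_1^{-1}}$, and you combine Farkas's upper bound, the determinantal lower bound, and \autoref{sum of 3 pencils with a minimal} in the minimal case. The step you call the main obstacle, however, is not established, and the justification you offer is false as a general statement. There is no theorem that the class of $V^{e-f}_e(\ell)$ is a nonzero positive combination of $x^i\theta^j$ whenever $e-2f\ge 0$. For the twisted cubic ($\ell=\abs{\OO_{\PP^1}(3)}$, $e=3$, $f=1$, exactly your shape $\dim\ell(-D)\ge 1$) there are no collinear triples although the expected dimension is $1$, so the Porteous class vanishes. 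To use the class you would have to evaluate it for $\ell=\abs{\omega_C\otimes A_1^{-1}}$ and prove it nonzero, which is a real enumerative computation. The $K3$ fallback is only mentioned, and a single special curve would still need a deformation argument.

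Without nonemptiness you get neither $\dim\grond_{C,a_1,a_2}\ge r-3$, nor the component $\Sigma$, nor the surjection onto $G^1_{a+1}(C)$. The paper gets nonemptiness from Farkas's enumerative formula (Theorem 0.7 in \cite{Farkas08}). For $a_1=a_2=b$, the number of $(2g-2b-2)$-secant divisors imposing $g-b-1$ conditions on $\abs{\omega_C\otimes A^{-1}}$ is the positive number in \autoref{correspondence on pencils of same degree}. When $g=2a$ and $a_1=a+1$, two distinct minimal pencils and the base-point-free pencil trick suffice. The unequal case follows by moving $a_2-a_1$ points of the residual divisor $D'$ into the second pencil: $A_2=A_2'(p_1+\dots+p_{a_2-a_1})$ and $D=D'-p_1-\dots-p_{a_2-a_1}$. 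Over a general complete $A_1$ this gives a nonempty secant locus of dimension $\ge a_2-a_1$, hence $\Sigma$.

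\textbf{Gap 2: the upper bound is incomplete.} You only parametrize complete $A_1$, so the part of $\grond_{C,a_1,a_2}$ over $\{h^0(A_1)\ge 3\}$ is never bounded, and this locus can be nonempty when $\rho(g,1,a_1)\ge 1$. Petri generality only computes $\dim W^2_{a_2}(C)$. It says nothing about how the condition $h^0(A_2)\ge 3$ meets the condition defining $\grond$.

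In your postponed case $a_1=r-1$, $a_2=g$, \emph{every} point of $\grond$ has $h^0(A_2)=1+h^0(A_1(D))\ge 3$. There the Grassmannians $G(2,H^0(A_2))$ account for two of the $r-3$ dimensions. For example, with $g=6$ and $r=5$, $\grond_{C,4,6}$ is the union of the five planes $G(2,H^0(\omega_C\otimes A_1^{-1}))$. What is needed is the paper's combined count:
\begin{itemize}
\item stratify by $h^0(A_i)=2+\eps_i$;
\item bound the pairs $(\omega_C\otimes A_2^{-1},\OO_C(D))$ using \autoref{super special secant divisors};
\item add $2\eps_1+2\eps_2$ for the choices of $V_1,V_2$;
\item check that the total is $\le r-3$, including the case $a_2=g$.
\end{itemize}
Once that count is in place, your use of \autoref{super special secant divisors} with $h=1$ to get $h^0(A_1\otimes A_2)=r+1$ at the general point of $\Sigma$ is a valid alternative to the paper's appeal to \autoref{no quads of rk 4 generically}.
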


The situation $r=3$ can only hold when the genus is even, $g=2a$. In this case, $a_1=a_2=a+1$, $d=2a+2$ and the generality of $C$ implies $G^1_{a+1}(C)=W^1_{a+1}(C)$ is reduced and finite, with a total of $w^1_{a+1}=\frac{1}{a+1}\binom{2a}{a}$ points. We know from either \autoref{sum of 3 pencils with a minimal} or Theorem 3.1 in \cite{Voisin92} that $h^0(C,A_1\otimes A_2)\le 4$ holds for all pairs $(A_1,A_2)\in W^1_{a+1}(C)\times W^1_{a+1}(C)$. On the other hand, if $A_1\ne A_2\in W^1_{a+1}(C)$, then the basepoint free pencil trick on the multiplication map 
$$H^0(C,A_1)\otimes H^0(C,A_2)\to H^0(C,A_1\otimes A_2)$$
forces $h^0(C,A_1\otimes A_2)\ge 4$, so in fact $h^0(C,A_1\otimes A_2)=4$. Since $h^0(C,A\otimes A)=3$ for all $A\in W^1_{a+1}(C)$ thanks to the fact that $C$ is (Petri) general, in this situation we have the explicit description
$$h^0(C,A_1\otimes A_2)=\begin{cases}
    3,\textnormal{ if }A_1=A_2\in W^1_{a+1}(C),\\
    4,\textnormal{ if }A_1\ne A_2\in W^1_{a+1}(C).
\end{cases}$$
\vspace{0.2cm}

The first instance when the genus is odd, $g=2a+1$, holds when $r=4$. Here, the only option is $a_1=a_2=a+2$ and the generality of $C$ implies $G^1_{a+2}(C)=W^1_{a+2}(C)$ is a smooth connected curve of genus (see e.g. Theorem 4\cite{EH87})
$$g\left(W^1_{a+2}(C)\right)=1+\frac{a}{a+1}\binom{2a+2}{a}.$$
Thanks to work done in \cite{FO11} and \cite{Ortega13}, it is known that $\grond_{C,a+2,a+2}$ is nonempty of pure dimension $1$ and that it is a symmetric correspondence on $W^1_{a+2}(C)$, whose degree is given by the Castelnuovo number 
$$C(2a+1,a-1,3a-2)=1+\frac{a-2}{a+2}\binom{2a+1}{a}.$$
This situation generalizes naturally to pairs of pencils of the same arbitrary degree, as the subsection 5.4 below shows. 

\subsection{The dimensional upper bound in \autoref{deg locus in sum of pencils}} 

Consider the stratifications 
$$W^1_{a_2}(C)\supseteq W^2_{a_2}(C)\supseteq W^3_{a_2}(C)\supseteq \dots\textnormal{ and }W^1_{a_1}(C)\supseteq W^2_{a_1}(C)\supseteq W^3_{a_1}(C)\supseteq \dots$$
and define $\widetilde{W}^s_{a_2}(C)=W^s_{a_2}(C)\setminus W^{s+1}_{a_2}(C)$ as well as $\widetilde{W}^s_{a_1}(C)=W^s_{a_1}(C)\setminus W^{s+1}_{a_1}(C)$, for all $s\ge 1$. For all $\eps_1,\eps_2\ge 0$, we take a closer look at the sublocus in $\grond_{C,a_1,a_2}$ consisting of pairs $((A_1,V_1),(A_2,V_2))\in G^1_{a_1}(C)\times G^1_{a_2}(C)$ such that 
\begin{align}\label{subcondition on pairs of pencils 2}
(A_1,A_2)\in \widetilde{W}^{1+\eps_1}_{a_1}(C)\times \widetilde{W}^{1+\eps_2}_{a_2}(C)\textnormal{ and }h^0(C,A_1\otimes A_2)\ge r+1.
\end{align}
As of Riemann-Roch, the requirement $h^0(C,A_1\otimes A_2)\ge r+1$ is the same as having an effective divisor $D\in C_{2g-a_1-a_2-2}$ such that 
$$\omega_C=A_1\otimes A_2\otimes \OO_C(D).$$
If we define $B_2=\omega_C\otimes A_2^{-1}\in W^{g-a_2+\eps_2}_{2g-2-a_2}(C)$, we see that looking for pairs $\left((A_1,V_1),(A_2,V_2)\right)\in G^1_{a_1}(C)\times G^1_{a_2}(C)$ such that \eqref{subcondition on pairs of pencils 2} holds is the same as the following package: 

\textbf{(a)} a complete linear series $B_2\in W^{g-a_2+\eps_2}_{2g-2-a_2}(C)$ and a line bundle $\OO_C(D)$ corresponding to an effective divisor $D\in C_{2g-a_1-a_2-2}$ such that $h^0(B_2(-D))=2+\eps_1$, 

\textbf{(b)} a point $V_1$ in the Grassmannian $G(2,H^0(C,B_2(-D)))=G(2,2+\eps_1)$, and 

\textbf{(c)} a point $V_2$ in the Grassmannian $G(2,H^0(C,A_2))=G(2,2+\eps_2)$, where $A_2=\omega_C\otimes B_2^{-1}$. 
\vspace{0.2cm}

Let us first consider only the moduli for \textbf{(a)}. Here, we see that the locus of pairs $(A_1,A_2)\in \widetilde{W}^{1+\eps_1}_{a_1}(C)\times \widetilde{W}^{1+\eps_2}_{a_2}(C)$ such that $h^0(C,A_1\otimes A_2)\ge r+1$ is contained in 
$$\left\{(B_2,\OO_C(D))\mid B_2\in G^{g-a_2+\eps_2}_{2g-2-a_2}(C)\textnormal{ and }D\in V_{2g-a_1-a_2-2}^{g-a_2+\eps_2-\eps_1-1}(B_2)\right\}.$$
As of \autoref{super special secant divisors}, we know this has a dimensional upper bound of 
\begin{align*}
    \rho(g,1+\eps_2,a_2)&+(2g-a_1-a_2-2)-(2+\eps_1)(g-a_1-1+\eps_1-\eps_2)=\\
    &=r-3-\eps_1(g+1-a_1)-\eps_2(g+1-a_2)-\left(\eps_1^2+\eps_2^2-\eps_1\eps_2\right).
\end{align*}
When we also take into account \textbf{(b)} and \textbf{(c)}, we add an extra $\dim G(2,2+\eps_1)=2\eps_1$ and $\dim G(2,2+\eps_2)=2\eps_2$ to the above. Therefore, the sublocus described in \eqref{subcondition on pairs of pencils 2} is of dimension at most 
$$r-3-\eps_1(g-1-a_1)-\eps_2(g-1-a_2)-\left(\eps_1^2+\eps_2^2-\eps_1\eps_2\right).$$
If both $a_1\le g-1$ and $a_2\le g-1$ hold, then the above quantity is automatically bounded above by $r-3$. If $a_2=g$, then we must have $a_1\le g-2$, since $a_1+a_2=d\le 2g-2$ (since $\rho(g,r,d)=2g-d-2$ here). In this case, we want to show 
$$\eps_1(g-1-a_1)+\eps_2(g-1-a_2)+\left(\eps_1^2+\eps_2^2-\eps_1\eps_2\right)=\eps_1(g-1-a_1)-\eps_2+\left(\eps_1^2+\eps_2^2-\eps_1\eps_2\right)\ge 0$$
holds. If $\eps_1\ge \eps_2+1$, this quantity rewrites as 
$$\eps_1(g-a_1-2)+\eps_2(1+\eps_2)+\eps_1(\eps_1-\eps_2-1)+2(\eps_1-\eps_2)\ge 0.$$
If $\eps_1=\eps_2=\eps$, then the inequality becomes 
$$g-a_1-2+\eps^2\ge 0,\textnormal{ again true}.$$
The last option is $\eps_1\le \eps_2-1$, in which case the inequality can be written into 
$$\eps_1(g-a_1-1)+\eps_1^2+\eps_2(\eps_2-\eps_1-1)\ge 0,\textnormal{ true}.$$
All in all, all subloci of $\grond_{C,a_1,a_2}$ as defined in \eqref{subcondition on pairs of pencils 2} are of dimension at most $r-3$. This settles the upper bound $\dim \grond_{C,a_1,a_2}\le r-3$. 

\subsection{Special tensors of pencils of the same degree}

In order to ensure the existence part of \autoref{deg locus in sum of pencils}, we need the following result. A byproduct of this lemma is an  interesting enumerative formula which comes as a consequence of the connection with the varieties of secant divisors. 

\begin{lemma}\label{correspondence on pencils of same degree}
Let $g,r,b\ge 1$ be integers such that $r\ge 4$, $g-2b+r=1$, $\rho(g,r,2b)\ge 0$ and $g-1\ge b\ge \ceiling {g/2}+1$. Let $C$ be a general curve of genus $g$. It is known that $G^1_b(C)\times G^1_b(C)$ is a variety of dimension $2(r-3)$. Then the locus 
$$\grond_{C,b,b}=\left\{\left((A_1,V_1),(A_2,V_2)\right)\in G^1_b(C)\times G^1_b(C)\mid h^0(C,A_1\otimes A_2)\ge r+1\right\}$$
is nonempty and of dimension $r-3$. Furthermore, both projections on the two factors $G^1_b(C)$ are generically finite and realize $\grond_{C,b,b}$ as a symmetric correspondence on $G^1_b(C)$, whose degree is the Castelnuovo number 
$$C(g,g-b,2g-2-b)=\sum_{i=0}^{g-b-1}\frac{(-1)^i}{g-b-i}\binom{g-i-1}{b}\binom{g-i-2}{b-1}\binom{g}{i}.$$
\end{lemma}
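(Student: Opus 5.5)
Throughout, $d=2b=g+r-1$, so $\rho(g,1,b)=2b-g-2=r-3$ and $2g-2-2b=g-r-1$. The upper bound $\dim \grond_{C,b,b}\le r-3$ is the case $a_1=a_2=b$ of the argument in subsection 5.3. What remains is nonemptiness, generic finiteness of the projections, and the degree. My plan is to fix a general $(A_1,V_1)\in G^1_b(C)$ and to describe the fibre of the first projection through secant divisors. Since $b\le g-1$ and $A_1$ is general, $h^0(A_1)=2$ and $B:=\omega_C\otimes A_1^{-1}$ is a complete $g^{g-b}_{2g-2-b}$, namely the Serre dual, which is general in its Brill--Noether locus.

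By Riemann--Roch, the condition $h^0(A_1\otimes A_2)\ge r+1$ says that $h^0(\omega_C\otimes A_1^{-1}\otimes A_2^{-1})\ge 1$. Equivalently, there is an effective $D\in C_{g-r-1}$ with $A_2=B(-D)$. The requirement $h^0(A_2)\ge 2$ says that $\dim|B(-D)|\ge 1$. So $D$ is a divisor of degree $e=g-r-1=2g-2-2b$ imposing at most $e-f$ conditions on $B$, with $g-b-(e-f)=1$, i.e. $f=e-g+b+1=b-r$. The fibre over $A_1$ is therefore identified, up to the finitely many Grassmannian choices that occur only on the higher strata, with the secant variety $V^{e-f}_e(B)$. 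Its expected dimension is $e-f(g-b+1-e+f)=e-f\cdot 2=(g-r-1)-2(b-r)=g-2b+r-1=0$, using $g-2b+r=1$. The varieties of secant divisors from subsection 5.1 then apply. Because $B$ is general in $G^{g-b}_{2g-2-b}(C)$, the dimension statement of \cite{Farkas08} gives that $V^{e-f}_e(B)$ is finite, of pure dimension $0$, as soon as it is nonempty. Moreover, \autoref{super special secant divisors} excludes divisors $D$ with $h^0(D)\ge 2$ and lets us assume $h^0(A_2)=2$ generically.

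For nonemptiness and the degree I will use the classical secant-plane formula, i.e. de Jonqui\`eres/ACGH Chapter VIII. A linear series $g^{R}_{N}$ with $R=g-b$ and $N=2g-2-b$ has a virtual number of $e$-secant $(e-f-1)$-planes when the expected dimension is $0$. This number is computed on $C_e$ as a top Segre/Chern class of the tautological bundle. The formula then evaluates to $C(g,g-b,2g-2-b)$, which is the displayed alternating sum. This is also the number of $g^1_b$'s contained in $B$ after subtracting $D$, which is precisely the Castelnuovo-type count. If the number is positive and the locus is finite, it is the actual count with multiplicities. Reducedness for a general $B$ should follow from a monodromy or transversality argument. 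This step, verifying that the formula gives exactly the stated Castelnuovo sum and that it is positive, is the main obstacle. First I would try to degenerate $D$ and reduce to counting $g^{1}_{b}$'s on $C$ with prescribed behaviour. A cleaner alternative is to recognise that the pairs $(A_2,D)$ correspond to $A_2\in W^{1}_{b}$ with $h^0(\omega\otimes A_1^{-1}\otimes A_2^{-1})\ge1$. By Serre duality, they therefore also correspond to linear series of type $g^{g-b}_{2g-2-b}$. The count is then computed via the Harris--Tu/Castelnuovo formula for $W^{g-b}_{2g-2-b}$ restricted appropriately.

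From this, each fibre of the first projection over a general $A_1$ is finite and nonempty. Since $G^1_b(C)$ is irreducible of dimension $r-3\ge1$, $\grond_{C,b,b}$ has a component of dimension $r-3$ dominating it. Combined with the upper bound, $\dim\grond_{C,b,b}=r-3$. The condition defining $\grond_{C,b,b}$ is symmetric in $A_1,A_2$, so the second projection behaves in the same way. This yields a symmetric correspondence of the computed degree.
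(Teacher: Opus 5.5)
There is a genuine gap: the nonemptiness of the fibre and its degree, which are the real content of the lemma, are asserted but not proved.

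Your reduction is the same as the paper's. You fix a general pencil and pass to the complete series $B=\omega_C\otimes A_1^{-1}$, a $g^{f+1}_{2g-2-b}$ with $f=g-b-1$. You identify the fibre of the projection with the secant variety $V^{f}_{2f}(B)\subset C_{2f}$, of expected dimension $0$, and get finiteness from the dimension theorem of \cite{Farkas08}. All of that is fine.

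What you call ``the main obstacle'' is exactly what the lemma claims beyond the upper bound: that this fibre is nonempty and has degree $C(g,g-b,2g-2-b)$. You only say that the secant-plane formula ``evaluates to'' the displayed sum. Without positivity of that number you do not know that $\grond_{C,b,b}\neq\emptyset$. So the dominating component of dimension $r-3$ and the symmetric-correspondence statement are also unproved. The paper does not compute this number either. It closes the step by quoting Theorem 0.7 of \cite{Farkas08}, which gives, for a general curve and a general such $B$, both the finiteness and the count $C(g,g-b,2g-2-b)$ of these $2f$-secant $(f-1)$-planes.

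To finish along your first route, apply Porteous' formula to $H^0(B)\otimes\OO_{C_{2f}}\to E_B$, a map from rank $f+2$ to rank $2f$ degenerating to rank at most $f$. Its class is the Schur determinant $c_f(E_B)^2-c_{f-1}(E_B)c_{f+1}(E_B)$, not a top Segre or Chern class. You must evaluate it on $C_{2f}$ via the Chern classes of the secant bundle in $x$ and $\theta$, match the result with the alternating sum, and prove it is positive. Since $V^f_{2f}(B)$ has the expected dimension, a positive degree then gives nonemptiness and the degree with multiplicities. As a check, for $g=9$, $b=6$ this is the number of $4$-secant lines to a space curve of degree $10$ and genus $9$, namely $43$, which agrees with the formula.

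Your ``cleaner alternative'' cannot work. In $\Pic^b(C)$ the fibre is $W^1_b(C)\cap\{\omega_C\otimes A_1^{-1}(-D)\mid D\in C_{2f}\}$. These loci have dimensions $r-3$ and $2f=g-r-1$, so the intersection has expected dimension $-4$. Harris--Tu type class computations in the Jacobian therefore return $0$; the correct count only appears after fixing $B$ and working on $C_{2f}$.

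Two smaller points:
\begin{itemize}
\item De Jonqui\`eres' formula counts divisors with prescribed multiplicities in a series, not secant planes.
\item On strata with $h^0(A_2)\ge 3$ the Grassmannian choice is positive-dimensional, not finite. What you should say is that these strata are empty for general $B$, again by \cite{Farkas08}.
\end{itemize}
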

We remark that $\grond_{C,b,b}$ is the particular situation $a_1=a_2=b$ in \autoref{deg locus in sum of pencils}. Indeed, observe that in the main theorem we have $g-d+r=1$, which gives $\rho(g,r,d)=2g-2-d\ge 0$. If the degree $d$ is even and $d=2b$, then the condition $g\ge b\ge \ceiling {g/2}+1$ simply reads as $g-1\ge b\ge \ceiling {g/2}+1$. 
\begin{proof}
A generic $A_2\in G^1_b(C)$ satisfies $h^0(C,A_2)=2$, so it is complete. The same construction with $B=\omega_C\otimes A_2^{-1}\in W^{g-b}_{2g-2-b}(C)$ shows that we are interested in effective divisors $D$ of degree $2g-2b-2$ that impose $g-b-1$ conditions on $\abs{B}$. As of Theorem 0.7 in \cite{Farkas08}, there are finitely many such divisors and their number is $C(g,g-b,2g-2-b)$. Every such divisor corresponds uniquely to some $(A_1,V_1)\in G^1_b(C)$ such that $h^0(C,A_1\otimes A_2)\ge r+1$. Since $G^1_b(C)$ has dimension $\rho(g,1,b)=r-3$, we conclude that $\grond_{C,b,b}$ is exactly of dimension $r-3$ and has a generically finite fiber over $G^1_b(C)$ via either of the two projections, both of degree $C(g,g-b,2g-2-b)$. 
\end{proof}

\subsection{The existence part in \autoref{deg locus in sum of pencils} and the component $\Sigma$}

Let $(A_1,V_1)\in G^1_{a_1}(C)$ be a generic pencil. We show that there exists $A_2\in W^1_{a_2}(C)$ such that $h^0(C,A_1\otimes A_2)\ge r+1$. Indeed, we know from \autoref{correspondence on pencils of same degree} that there exist pencils $A_2'\in W^1_{a_1}(C)$ such that $h^0\left(C,A_1\otimes A_2'\right)\ge 2a_1+2-g$. By Riemann-Roch, this is the same as $h^0\left(C,\omega_C\otimes A_1^{-1}\otimes A_2'^{-1}\right)\ge 1$, so there exists an effective divisor $D'\in C_{2g-2a_1-2}$ such that $\omega_C=A_1\otimes A_2'\otimes \OO_C(D')$. Let $p_1,p_2,\dots,p_{a_2-a_1}$ be $a_2-a_1$ points in the support of $D'$. If we write $A_2=A_2'\otimes \OO_C\left(p_1+\dots+p_{a_2-a_1}\right)\in W^1_{a_2}(C)$ and $D=D'-p_1-\dots-p_{a_2-a_1}\in C_{2g-a_1-a_2-2}$, then 
$$\omega_C=A_1\otimes A_2\otimes \OO_C(D)$$
and $D$ is still effective. From this it follows that $h^0\left(C,\omega_C\otimes A_1^{-1}\otimes A_2^{-1}\right)\ge 1$, so $h^0(C,A_1\otimes A_2)\ge r+1$. 

Now let $(A_1,V_1)\in G^1_{a_1}(C)$ be a pencil that is also complete; if $a_1=\ceiling {g/2}+1$ is minimal, then this is automatic. Define $B_1=\omega_C\otimes A_1^{-1}\in W^{g-a_1}_{2g-2-a_1}(C)$, so $h^0(C,B_1)=g-a_1+1$. The argument above shows that there exist secant divisors $D$ of degree $2g-a_1-a_2-2$ that impose at most $g-a_1-1$ conditions on $\abs{B_1}$. Therefore, the variety $V^{g-a_1+1}_{2g-a_1-a_2-2}(\abs{B_1})$ is nonempty and its determinantal structure tells us it is of dimension at least $a_2-a_1$. From Corollary 0.3 in \cite{Farkas08}, we deduce that if $B_1\in W^{g-a_1}_{2g-2-a_1}(C)$ is general i.e. if $(A_1,V_1)\in G^1_{a_1}(C)$ is general, then $V^{g-a_1+1}_{2g-a_1-a_2-2}(\abs{B_1})$ is precisely of dimension $a_2-a_1$ and its generic point is a divisor that imposes precisely $g-a_1-1$ conditions on the linear series $\abs{B_1}$. In other words, there is an $(a_2-a_1)$-dimensional family of complete pencils $(A_2,V_2)\in G^1_{a_2}(C)$ such that $h^0(C,A_1\otimes A_2)\ge r+1$, granted $(A_1,V_1)\in G^1_{a_1}(C)$ is general. 
\vspace{0.2cm}

If $\rho(g,1,a_1)=0$ i.e. $g=2a$ is even and $a_1=a+1$, then $G^1_{a+1}(C)=W^1_{a+1}(C)$ is reduced and finite and consists of $w^1_{a+1}=\frac{1}{a+1}\binom{2a}{a}$ points. The fiber of the map $\grond_{C,a+1,a_2}\to W^1_{a+1}(C)$ over each point $A_1\in W^1_{a+1}(C)$ is at most $r-3$ dimensional, according to subsection 5.3. On the other hand, we just argued that the dimension of each such fiber is at least $a_2-a_1$ and, in this case, we have $a_2-a_1=r-3$. Consequently, over each point $A_1\in W^1_{a+1}(C)$ there is an $(r-3)$-dimensional fiber. 

If $\rho(g,1,a_1)\ge 1$, then $G^1_{a_1}(C)$ is irreducile. The discussion from two paragraphs ago ensures the existence of a component $\Sigma$ of $\grond_{C,a_1,a_2}$ that maps dominantly onto $G^1_{a_1}(C)$ and which has dimension 
$$\dim \Sigma=\rho(g,1,a_1)+(a_2-a_1)=a_1+a_2-g-2=r-3.$$
In order to prove the equality $h^0(C,A_1\otimes A_2)=r+1$ for the generic point $\left((A_1,V_1),(A_2,V_2)\right)$ of $\Sigma$, by semicontinuity it suffices to prove this holds for a single point. But if $(A_1,V_1)\in G^1_{a_1}(C)$ is general, then it is complete and we know from \autoref{no quads of rk 4 generically} that the multiplication map 
$$\Sym^2 H^0\left(C,\omega_C\otimes A_1^{-1}\right)\to H^0\left(C,\omega_C^{2}\otimes A_1^{-2}\right)$$
does not contain quadrics of rank at most $4$ in its kernel. This means that we cannot write $\omega_C=A_1\otimes A_2\otimes A_3$ for any two other pencils $A_2$ and $A_3$, so $h^0\left(C,\omega_C\otimes A_1^{-1}\otimes A_2^{-1}\right)\le 1$ i.e. $h^0(C,A_1\otimes A_2)\le r+1$ must hold for all $A_2\in W^1_{a_2}(C)$. Since $h^0(C,A_1\otimes A_2)\ge r+1$ is guaranteed in the fiber over $(A_1,V_1)$, we conclude that $h^0(C,A_1\otimes A_2)=r+1$ for all $\left((A_1,V_1),(A_2,V_2)\right)$ in the fiber of $\Sigma$ over $(A_1,V_1)$. 

Finally, the same argument works when $\rho(g,1,a_1)=0$ i.e. $g=2a$ is even and $a_1=a+1$, with the further remark that the bound $h^0(C,A_1\otimes A_2)\le r+1$ holds for all points $(A_1,A_2)\in W^1_{a+1}(C)\times W^1_{a_2}(C)$, thanks to \autoref{no quads of rk 4 if minimal pencil}. We conclude that, here, $h^0(C,A_1\otimes A_2)=r+1$ is true for all $\left((A_1,V_1),(A_2,V_2)\right)\in \grond_{C,a+1,a_2}$. 

\subsection{The incidence variety of linear series and rank $4$ quadrics}

Now that we have the entire proof of \autoref{deg locus in sum of pencils}, we are in a position to prove the following

\begin{theorem}\label{incidence variety for rank 4 quadrics}
Let $g,r,d\ge 1$ be integers such that $r\ge 4$, $s=d-g-r\ge -1$ and $\rho(g,r,d)\ge 0$. Let $C$ be a general curve of genus $g$. Consider the incidence variety 
$$\irond_{C}^4=\left\{(\ell,q)\mid \ell\in G^r_d(C)\textnormal{ and }q\in \Sigma_4(C,\ell)\right\}.$$
Then 

$\bullet$ for $r=4$, we have $\dim \irond_C^4=3s+4$,  

$\bullet$ for $r\ge 5$ and $s\ge 0$, $\irond_C^4$ has a component of dimension $(r-1)(s+2)-2$, and 

$\bullet$ for $r\ge 5$, $s=-1$ and $\ceiling {g/2}\ge r-2$, $\irond_C^4$ has a component of dimension $r-3$. 
\end{theorem}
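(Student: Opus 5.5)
By \autoref{character of rank 3 and 4}, a point $(\ell,q)\in\irond_C^4$ with $\rk(q)\le 4$ corresponds, up to finitely many choices (for instance the ordering of the two rulings), to a quadruple $(F,(A_1,V_1),(A_2,V_2),V(-F))$. Here $F\in C_f$, the $(A_i,V_i)\in G^1_{a_i}(C)$ are basepoint free pencils, $a_1+a_2+f=d$, and $V(-F)\in G(r+1,H^0(C,A_1\otimes A_2))$ contains the image of $V_1\otimes V_2$. I would therefore compute $\dim\irond_C^4$ by a parameter count over this data, stratified by $f$ and by $h^0(C,A_1\otimes A_2)$. The rank $3$ locus, where $\ell_1=\ell_2$, has dimension at most $(r-1)(s+1)-1$ by \autoref{incidence variety for rank 3}. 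This is strictly smaller than every bound below, so I would restrict to distinct pencils. In that case the image of $V_1\otimes V_2$ is $4$-dimensional, and $V(-F)$ is determined by a point of $G\bigl(r-3,H^0(A_1\otimes A_2)/\mathrm{Im}(V_1\otimes V_2)\bigr)$.

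\emph{Generic stratum.} Put $d'=d-f=a_1+a_2$ and assume $h^0(A_1\otimes A_2)=d'+1-g$. By the discussion opening subsection 5.2, this minimal value is attained on every component of $W^1_{a_1}(C)\times W^1_{a_2}(C)$. The count is
$$f+\rho(g,1,a_1)+\rho(g,1,a_2)+(r-3)(d'-g-r)=(r-1)s+2r-4-(r-2)f,$$
which requires $s-f\ge 0$ so that the Grassmannian is nonempty. Taking $f=0$ and any admissible splitting $a_1,a_2\ge\ceiling{g/2}+1$ produces a component of dimension $(r-1)(s+2)-2$ for $s\ge 0$. For $r=4$ this equals $3s+4$.

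\emph{Case $s=-1$.} Now $f=0$, $d=g+r-1$, and we need $h^0(A_1\otimes A_2)\ge r+1$, in which case $V=H^0(C,A_1\otimes A_2)$ is forced. \autoref{deg locus in sum of pencils} gives exactly this locus $\grond_{C,a_1,a_2}$, which is nonempty of dimension $r-3$ with $h^0=r+1$ generically. The hypothesis $\ceiling{g/2}\ge r-2$ is what makes a splitting $d=a_1+a_2$ with $\ceiling{g/2}+1\le a_1\le a_2\le g$ available, so we obtain a component of dimension $r-3$. For $r=4$ this is $1=3s+4$.

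\emph{Upper bound for $r=4$.} Here I must show that no stratum exceeds $3s+4$. Strata with $f>0$ and generic $h^0$ lose $2f$. The remaining strata are those where $h^0(A_1\otimes A_2)=d'+1-g+\eps$ jumps. There the extra $\eps(r-3)=\eps$ Grassmannian dimensions must be compensated by the codimension of the jumping locus in $W^1_{a_1}\times W^1_{a_2}$. I would control that codimension exactly as in subsection 5.3: write $\omega_C=A_1\otimes A_2\otimes\OO_C(D)$ with $h^0(D)\ge\eps$, and apply \autoref{super special secant divisors} to $B_2=\omega_C\otimes A_2^{-1}$ with $h=\eps-1$, accounting for the $\eps_1,\eps_2$ refinements. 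The strata with $\eps\ge 2$ and $d'\le g+2$ or $d'\le g+3$ are additionally excluded by \autoref{sum of 3 pencils with a minimal}.

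I expect this uniform upper bound over all jumping strata, where $A_1\otimes A_2$ is special and $d'$ may be well below $g+r-1$, to be the main obstacle. I would first attempt the secant-divisor estimate, checking that each unit of $\eps$ costs at least $g+1-a_i-\eps\ge 1$ dimensions, as in the inequalities of subsection 5.3.
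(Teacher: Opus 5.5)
Your lower bounds follow the paper's proof. The rank $3$ locus is discarded via \autoref{incidence variety for rank 3}. The component of dimension $(r-1)(s+2)-2$ is the Grassmannian bundle over the stratum where $h^0(C,A_1\otimes A_2)$ is minimal. For $s=-1$, the component of $\grond_{C,a_1,a_2}$ from \autoref{deg locus in sum of pencils} forces $V=H^0(C,A_1\otimes A_2)$. Stratifying by the base divisor $F$, instead of absorbing it into one of the pencils as the paper does, is harmless.

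The gap is the upper bound $\dim\irond_C^4\le 3s+4$ for $r=4$. This is the only part of the statement that needs a new argument, and you leave it as a plan. As formulated, the plan fails when $a_2\ge g+1$, a range that is nonempty for $s\ge 0$ as soon as $d\ge\ceiling{3g/2}+2$. There the quantity $g+1-a_2-\eps$ you want to bound below by $1$ is at most $-\eps$. The inequalities of subsection 5.3 are also unavailable, since they assume $a_2\le g$, and the correction $-\eps_2(g-1-a_2)$ becomes positive.

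The paper treats this range with a different, elementary observation. The residual $\omega_C\otimes A_1^{-1}\otimes A_2^{-1}$ has degree $2g-2-a_1-a_2\le\floor{g/2}-4$, which is below the gonality, so $h^1(C,A_1\otimes A_2)\le 1$. The only jumping stratum is then a proper closed subset of $G^1_{a_1}(C)\times G^1_{a_2}(C)$, because the minimal value of $h^0$ is attained on every component. Over it, the admissible $V(-F)$ form a Grassmannian of dimension $d'-g-3$. So the total is at most $f+\rho(g,1,a_1)+\rho(g,1,a_2)-1+(d'-g-3)=3s+4-2f$.

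For $a_2\le g$, the right bookkeeping is not a unit-by-unit comparison with the generic stratum, which is empty when $d'\le g+3$. It is the absolute estimate from \autoref{super special secant divisors} with $h=\eps-1$, combined with subsection 5.3. For $\eps\ge 1$, the pairs of pencils with $h^1(C,A_1\otimes A_2)\ge\eps$ form a locus of dimension at most $d'-g-1-\eps$. Over it, the admissible $V(-F)$ form a Grassmannian of dimension $d'-g-4+\eps$. The jump $\eps$ cancels, and each such stratum contributes at most $f+2d'-2g-5=2s+3-f\le 3s+4$; with $f=0$ this is exactly the paper's computation.

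Finally, \autoref{sum of 3 pencils with a minimal} concerns curves of even genus only. So it cannot exclude $\eps\ge 2$ when $d'=g+3$ and $g$ is odd. The secant estimate makes that appeal unnecessary anyway.
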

\begin{proof}
We use the alternative description of $\irond_C^4$ given by \autoref{character of rank 3 and 4}. By including the base locus $F$ in either of the two pencils $(A_1,V_1)\in G^1_{a_1}(C)$ and $(A_2,V_2)\in G^1_{a_2}(C)$ if necessary, it is enough to prove that, as we go through any splitting $d=a_1+a_2$ with $\ceiling {g/2}+1\le a_1\le a_2$, the varieties 
$$\irond_{C,a_1,a_2}^4=\left\{\left((A_1,V_1),(A_2,V_2),V\right)\mid \textnormal{ condition }(*)\textnormal{ holds}\right\}$$
satisfy the statement. Here, condition $(*)$ is the following two requirements: 

\textbf{(1)} $(A_1,V_1)\in G^1_{a_1}(C)$ and $(A_2,V_2)\in G^1_{a_2}(C)$, and

\textbf{(2)} $V\in G(r+1,H^0(C,A_1\otimes A_2))$ contains the image of $V_1\otimes V_2\to H^0(C,A_1\otimes A_2)$. 
\vspace{0.2cm}
\\
Since the sublocus in $\irond_{C,a_1,a_2}^4$ where $\rk(q)=3$ is already proven to be of dimension at most $(r-1)(s+1)-1<(r-1)(s+2)-2$, we may ignore it from now on. 
\vspace{0.2cm}

Suppose first that $s=d-g-r\ge 0$. Over the stratum in $G^1_{a_1}(C)\times G^1_{a_2}(C)$ where $h^0(C,A_1\otimes A_2)=d+1-g$ is minimum possible, the fibers of $\irond_{C,a_1,a_2}^4$ parametrize the vector spaces $V$ that satisfy condition \textbf{(2)} from above. The fiber over $((A_1,V_1),(A_2,V_2))$ is the Grassmannian $G(r-3,H^0(C,A_1\otimes A_2)/V_1\otimes V_2)$, so we obtain a component of $\irond_{C,a_1,a_2}^4$ that is of dimension 
$$\underbrace{\rho(g,1,a_1)+\rho(g,1,a_2)}_{\textnormal{dimension of }G^1_{a_1}(C)\times G^1_{a_2}(C)}+\underbrace{(r-3)(d-g-r)}_{\textnormal{dimension of Grassmannian}}=(r-1)(s+2)-2.$$
When $s=d-g-r=-1$, the image of the map $\irond_{C,a_1,a_2}^4\to G^1_{a_1}(C)\times G^1_{a_2}(C)$ is the subvariety $\grond_{C,a_1,a_2}$ that was studied in \autoref{deg locus in sum of pencils}. From the same theorem, as $\ceiling {g/2}+1\le a_1\le a_2\le g$ (which is the case, since it is assumed that $\ceiling{g/2}\ge r-2$), we infer the existence of a component $\Sigma$ of dimension $r-3$ in $\grond_{C,a_1,a_2}$ where the relation $h^0(C,A_1\otimes A_2)=d+2-g=r+1$ holds generically. For a generic point $((A_1,V_1),(A_2,V_2))\in \Sigma$, condition \textbf{(2)} holds if and only if $V=H^0(C,A_1\otimes A_2)$, so $V$ is determined uniquely. This gives the desired component of dimension $r-3$ in $\irond_{C,a_1,a_2}^4$. 
\vspace{0.2cm}

For $r=4$, we are able to set an upper bound on the extra strata. Consider again the situation $s=d-g-4\ge 0$ first. For all integers $h\ge 1$ and when $\ceiling{g/2}+1\le a_1\le a_2\le g$, combining \autoref{super special secant divisors} and the argument provided in subsection 5.3, we see that 
$$\grond_{C,a_1,a_2}^h:=\left\{\left((A_1,V_1),(A_2,V_2)\right)\in G^1_{a_1}(C)\times G^1_{a_2}(C)\mid h^0(C,A_1\otimes A_2)\ge d+1-g+h\right\}$$
is of dimension at most $d-g-1-h$; notice that $\grond_{C,a_1,a_2}^1=\grond_{C,a_1,a_2}$. Adding the moduli of the vector space $V\in G(5,H^0(C,A_1\otimes A_2))$ that needs to contain the image of $V_1\otimes V_2\to H^0(C,A_1\otimes A_2)$, it follows that, for each $h\ge 1$, the dimension of $\irond_{C,a_1,a_2}^4$ over the stratum $\grond_{C,a_1,a_2}^h\setminus \grond_{C,a_1,a_2}^{h+1}$ is at most 
$$d-g-1-h+\underbrace{(5-4)(d+h-g-4)}_{\textnormal{dimension of }G(5-4,H^0(C,A_1\otimes A_2)/V_1\otimes V_2)}=2s+3<3s+4.$$
When $a_2\ge g+1$, we are in the area $d=a_1+a_2\ge \ceiling{3g/2}+2$. Here, the residual of any series of degree $d$ is of degree $2g-2-d\le \floor {g/2}-4$, so it has at most one section. In other words, the only possible extra stratum is $\grond_{C,a_1,a_2}^1$ and it is trivially of codimension at least $1$ in $G^1_{a_1}(C)\times G^1_{a_2}(C)$. The dimension of $\irond_{C,a_1,a_2}^4$ over this extra stratum is thus at most 
$$\rho(g,1,a_1)+\rho(g,1,a_2)-1+\underbrace{(5-4)(d+1-g-4)}_{\textnormal{dimension of }G(5-4,H^0(C,A_1\otimes A_2)/V_1\otimes V_2)}=3s+4.$$
Finally, when $s=d-g-4=-1$ i.e. $d=g+3$, the same argument involving \autoref{super special secant divisors} shows that there is only one possible extra stratum, namely $\grond_{C,a_1,a_2}^2$, and furthermore $\dim \grond_{C,a_1,a_2}^2\le 0$ i.e. it contains at most finitely many points. Over every such hypothetical point $\left((A_1,V_1),(A_2,V_2)\right)$, given that $h^0(C,A_1\otimes A_2)=6$, the vector spaces $V\le H^0(C,A_1\otimes A_2)$ containing $V_1\otimes V_2$ vary in the Grassmannian $G(1,2)\cong \PP^1$. This establishes the upper bound $\dim \irond_{C,a_1,a_2}^4\le 1$ and finishes the proof of the lemma. 
\end{proof}

\subsection{Finiteness of the multiplication map of pencils}

In this subsection we finish the proof of \autoref{SSMRC rank 4}. Below, we specify explicitly the lower bounds that were left imprecise in the introduction. Recall that the space of quadrics in the projective space $\PP^r$ is parametrized by $\PP^N$, where $N=\binom{r+2}{2}-1$. In $\PP^N$, the degree of the variety $\Sigma_4(\PP^r)$ parametrizing quadrics of rank at most $4$ in $\PP^r$ is given by the formula (see e.g. \cite{HT84})
$$\delta_r:=\deg \Sigma_4\left(\PP^r\right)=\frac{1}{r(r-1)}\binom{2r-4}{r-2}\binom{2r-3}{r-1},\textnormal{ for all }r\ge 4.$$

\begin{theorem}
The following table describes the knowledge regarding the degeneracy locus $Q^r_{d,4}(C)$ and its dimension, as it is predicted by \autoref{SSMRC} when $s=d-g-r\ge -1$.

\[
\begin{tabular}{|c|c|c|}
\hline
$k=4$ & $r=4$ & $r\ge 5$ \\
\hline
$s=-1$ & \makecell{empty for $g=5$ and $g=6$, \\ dimension $1$ for $g\ge 7$, as expected} & \makecell{a component of expected dimension $r-3$ \\ when $a\ge 3r-11$ for $g=2a$ even and \\ $\frac{a}{a+1}\binom{2a+2}{a}\ge \binom{\delta_r-1}{2}$ for $g=2a+1$ odd} \\
\hline
$s\ge 0$ & \makecell{expected dimension $3s+4$ \\ for $g\ge s+4$} & \makecell{a component of expected dimension \\ $(r-1)(s+2)-2$ \\ when $g\ge 2r+s-4$ is even and \\ $\frac{a}{a+1}\binom{2a+2}{a}\ge \binom{\delta_{r+s}-1}{2}$ when $g=2a+1$ odd} \\
\hline
\end{tabular}
\]
\end{theorem}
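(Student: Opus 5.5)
The plan is to pass from the incidence variety $\irond_C^4$ of \autoref{incidence variety for rank 4 quadrics} to $Q^r_{d,4}(C)$ through the forgetful map $\pi:\irond_C^4\to Q^r_{d,4}(C)$, in the same way as in the proof of \autoref{SSMRC rank 3}. In the stated ranges we have $q(g,r,d,4)=-1$, so a series lies in $Q^r_{d,4}(C)$ exactly when $\ker\nu(\ell)$ contains a quadric of rank at most $4$. Thus $\pi$ is surjective and gives $\dim Q^r_{d,4}(C)\le\dim\irond_C^4$. For the reverse inequality it is enough to exhibit, on the relevant component of $\irond_C^4$ (the one of dimension $(r-1)(s+2)-2$, respectively $r-3$, respectively $3s+4$ when $r=4$), a single point $\ell=(L,V)$ whose fiber $\Sigma_4(C,\ell)$ is finite. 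By upper semicontinuity of fiber dimension, $\pi$ is then generically finite on that component, and its image has the expected dimension.

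First I would settle the emptiness statements for $r=4$, $s=-1$, $g\in\{5,6\}$. Here $d=g+3$, and a rank $4$ quadric forces $d=a_1+a_2$ with $a_i\ge\ceiling{g/2}+1$. For $g=5$ this already fails ($8<4+4$ is false, but the only splitting $4+4$ needs $h^0(A_1\otimes A_2)\ge 5$, which is excluded by Green--Lazarsfeld since $d=8\ge 2g+1-2h^1-\Cliff$ with $h^1=1$). For $g=6$ it is the $d\le g+3$, $g$ even case of \autoref{sum of 3 pencils with a minimal}. For $g\ge 7$ the dimension $1$ comes from the upper bound in \autoref{incidence variety for rank 4 quadrics} together with finiteness of the fibers, as above.

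The core step is the finiteness of fibers, i.e. bounding the number of rank $\le 4$ quadrics in $\PP\ker\nu(\ell)$ for a well chosen $\ell$. Take a general point of the component. Its $V$ is spanned by $V_1\otimes V_2$ together with $r-3$ further general sections (or $V=H^0(A_1\otimes A_2)$ when $s=-1$). The kernel $K=\ker\nu(\ell)$ is a linear subspace of quadrics in $\PP^r$, and $\PP K\cap\Sigma_4(\PP^r)$ is expected to be finite because $\dim\PP K=\operatorname{codim}\Sigma_4-$ (something small). The degree $\delta_r$ then bounds the number of points whenever the intersection is zero-dimensional. To get this, I would argue by contradiction. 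A positive-dimensional family of rank $4$ quadrics in $\PP K$ yields a positive-dimensional family of pairs of pencils $(A_1',A_2')$ with $A_1'\otimes A_2'(F)=L$. The even genus case is then controlled by finiteness of $W^1_{a+1}(C)$, which accounts for the condition $g\ge 2r+s-4$ and $a\ge 3r-11$: the excess dimension is absorbed by minimal pencils. The odd genus case is where I expect the main obstacle. There the pencils move in the curve $W^1_{a+2}(C)$, and a monodromy/genus argument is needed: a map from a family carrying at most $\delta_{r+s}$ (respectively $\delta_r$) points per fiber into this curve must be constant if its genus $1+\frac{a}{a+1}\binom{2a+2}{a}$ exceeds the genus bound $\binom{\delta-1}{2}+1$ for plane-curve-type images. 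I would try to realize the rank $4$ locus in a pencil of kernels as a plane curve of degree at most $\delta$ and compare genera by Riemann--Hurwitz.

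Finally, for $s\ge0$ with $r\ge5$, I would reduce the degree $d$ by one at a time by adding base points. I would also use that when $V$ is enlarged to a general $(r+s+1)$-dimensional space, the relevant ambient is $\PP^{r+s}$, which explains $\delta_{r+s}$. This transfers the finiteness from the complete series $H^0(A_1\otimes A_2)$ on the $s=-1$ type stratum to the incomplete series, concluding $\dim Q^r_{d,4}(C)=\dim\irond_C^4$ on the chosen component.
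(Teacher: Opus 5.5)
Your overall frame (surjectivity of the forgetful map $\pi:\irond_C^4\to Q^r_{d,4}(C)$, plus generic finiteness of $\pi$ on one component from \autoref{incidence variety for rank 4 quadrics}) is the paper's, but two parts of the proposal fail.

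\textbf{The cases $r=4$, $s=-1$, $g\in\{5,6\}$.} Your claim that $q(g,r,d,4)=-1$ throughout the stated ranges is false here, since $q(5,4,8,4)=1$ and $q(6,4,9,4)=0$. In these cases \emph{every} $\ell$ has rank-$4$ quadrics in $\ker\nu(\ell)$. Emptiness of $Q^4_{d,4}(C)$ therefore means that $\Sigma_4(C,\ell)$ has no excess dimension, not that it is empty. For $g=5$, $G^4_8(C)=\{\lvert\omega_C\rvert\}$, and each $A\in W^1_4(C)$ gives a splitting $\omega_C=A\otimes(\omega_C\otimes A^{-1})$ into two pencils with $h^0=5$. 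Green--Lazarsfeld only yields surjectivity of $\nu(\omega_C)$ and rules out nothing. What must be shown is that the rank-$4$ quadrics form exactly a curve, namely $W^1_4(C)/\iota$, the discriminant quintic in $\PP\ker\nu(\omega_C)\cong\PP^2$. For $g=6$ one has $L=\omega_C(-p)$, so $\omega_C\otimes L^{-1}=\OO_C(p)$ is not a pencil. Hence \autoref{sum of 3 pencils with a minimal} does not apply, since it needs $h^0(\omega_C\otimes L^{-1})\ge 2$, i.e.\ $s\le -2$. The paper instead shows that the pencil $\PP\ker\nu(L)$ meets the quintic hypersurface $\Sigma_4(\PP^4)$ in exactly the $5=w^1_4$ points coming from the minimal pencils. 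So $\Sigma_4(C,\ell)$ is finite for every $\ell$.

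\textbf{The finiteness step is not established.} You ask for finiteness of all of $\Sigma_4(C,\ell)$, but your argument only sees quadrics with a ruling of minimal degree. What suffices, and what the paper proves, is finiteness of the fibers of $\pi$ restricted to $\irond^4_{C,a_1,a_2}$ with $a_1=\ceiling{g/2}+1$. More seriously, the mechanisms behind the stated bounds are missing.

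In even genus with $s=-1$, the condition $a\ge 3r-11$ is exactly $\rho(g,2,a_2)<0$. So every $A_2$ occurring in $\grond_{C,a+1,a_2}$ is complete. Then $A_1$ ranges over the finite set $W^1_{a+1}(C)$, and $A_2=L\otimes A_1^{-1}$ and $V_2=H^0(A_2)$ are determined by $L$. For $s\ge 0$ one needs instead that a general $V\supseteq V_1\otimes V_2$ meets $V_1\otimes H^0(A_2)$ only in $V_1\otimes V_2$. This is the count $(r+1)+2(a_2+1-g)-(d+1-g)=2r+s-g\le 4$.

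In odd genus the key idea is this: non-finiteness produces one $L$ with infinitely many splittings. Irreducibility of the curve $W^1_{a+2}(C)$ then puts all of it into $\PP\ker\nu(L)\cap\Sigma_4$ as a curve of degree at most $\delta$, contradicting Castelnuovo's bound. For $r=4$ the two rulings have equal degree, so the curve is the quotient of $W^1_{a+2}(C)$ by $A\mapsto L\otimes A^{-1}$. One also uses that $\Sigma_4(\PP^4)$ is a hypersurface to get $\dim\PP\ker\nu(L)\le 2$. Without the involution the count is wrong: for $a=2$ the quotient has genus exactly $\binom{4}{2}$, and a plane quintic of rank-$4$ quadrics really occurs. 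Your ``pencil of kernels'' with Riemann--Hurwitz does not amount to any of this.

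Finally, adding base points cannot produce the $s\ge 0$ component. Base-pointed series form a locus only one dimension larger than the $(s-1)$-component, hence $r-2$ short of $(r-1)(s+2)-2$. At best they could serve as special points in a semicontinuity argument you have not set up, and it would not give the hypotheses $g\ge 2r+s-4$ and $\delta_{r+s}$. The paper works directly with a general $V\supseteq V_1\otimes V_2$ inside the nonspecial $H^0(A_1\otimes A_2)\cong\CC^{r+s+1}$ (\autoref{finite projection 2}). That is where $\PP^{r+s}$ and $\delta_{r+s}$ come from; it is not an ``$s=-1$ type'' object.
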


For small values of $r\ge 5$, the lower bounds on the genus from above can be made explicit. Here is the evolution of the bound when $s=-1$:
\[
\begin{tabular}{|c|c|c|c|c|c|}
\hline
$r$ & $5$ & $6$ & $7$ & $8$ & $9$ \\
\hline
$g$ & $8\textnormal{ and }\ge 10$ & $14,16\textnormal{ and }\ge 18$ & $20,22\textnormal{ and }\ge 24$ & $26,28\textnormal{ and }\ge 30$ & $32,34,36\textnormal{ and }\ge 38$  \\
\hline
\end{tabular}
\]

We treat separately the values $(g,r,d)\in\{(5,4,8),(6,4,9)\}$, as they are the only instances in the above where $q(g,r,d,4)\ge 0$. 

The situation $(g,r,d)=(5,4,8)$ concerns the general canonical curve $C$ of genus $5$. The canonical embedding of a general $[C]\in\mrond_5$ sits on a $1$-dimensional family of rank $4$ quadrics (see \cite{Kadikoylu19}). This family of quadrics induces an involution of the Brill-Noether curve $W^1_4(C)$, which is 
$$\iota:W^1_4(C)\to W^1_4(C)\textnormal{ defined by }\iota(A)=\omega_C\otimes A^{-1}.$$

The other situation is $(g,r,d)=(6,4,9)$. Here $W^4_9(C)\cong C$ via the isomorphism $\omega_C(-p)\mapsfrom p$, and all $g^4_9$'s are complete and very ample. Furthermore, the multiplication map $\nu(L)$ is surjective for every $L\in G^4_9(C)$ thanks to \eqref{GL degree bound} and $\dim \ker\nu(L)=2$. This means that the embedded curve $C\stackrel{\abs{L}}{\injto}\PP^4$ lies on a pencil of quadrics. Any of these quadrics that is of rank $4$ must correspond to a splitting of $L$ as a sum of two pencils, one of which is of minimal of degree $4$. Since there are precisely $5=w^1_4$ such minimal pencils, we deduce the pencil of quadrics is transversal to $\Sigma_4(\PP^4)$ and they intersect in precisely $5=\deg \Sigma_4(\PP^4)$ points, each of them corresponding to one of the $5$ minimal pencils on $C$. Consequently, there is no special behavior and $Q^4_{9,4}(C)=\emptyset$ for the general $[C]\in\mrond_6$.  
\vspace{0.2cm}

From now on, our focus is on values for which $q(g,r,d,4)=-1$. Then the forgetful map $\pi:\irond_C^4\to Q^r_{d,4}(C)$ is well defined, as the special behavior of $Q^r_{d,4}(C)$ in $G^r_d(C)$ is given by linear series $\ell$ for which $\ker\nu(\ell)$ contains quadrics of rank at most $4$. Since $\pi$ is obviously surjective, we immediately obtain $\dim Q^r_{d,4}(C)\le \dim \irond_C^4=\beta(g,r,d,4)$. 

As we did in the proof of \autoref{SSMRC rank 3}, we show that the map $\pi$ is generically finite from at least one irreducible component of the expected dimension in $\irond_C^4$. With the use of \autoref{incidence variety for rank 4 quadrics}, this settles the equality $\dim Q^r_{d,4}(C)=\beta(g,r,d,4)$ (or just the existence of a component of the expected dimension $\beta(g,r,d,4)$, when $r\ge 5$) and finishes the proof of \autoref{SSMRC rank 4}. 

It suffices to look at the varieties $\irond_{C,a_1,a_2}^4$ that were introduced in the previous section. In the spirit of the previous paragraph, we focus our attention to the situation when $a_1=\ceiling{g/2}+1$, so at least one of the two pencils that give a ruling for the rank $4$ quadric is of minimal degree. We first deal with the case of special series, when $s=d-g-r=-1$. 

\begin{lemma}\label{finite projection}
Let $g,r,d$ be positive integers such that $d=g+r-1$, $\rho(g,r,d)\ge 0$, $r\ge 4$ and let $\ceiling {g/2}+1=a_1\le a_2\le g$ be such that $a_1+a_2=d$. If $C$ is a general curve of genus $g$, then the map 
\begin{align*}
    \mu:\grond_{C,a_1,a_2}\to W^r_{d}(C)\textnormal{ given by }
    \left((A_1,V_1),(A_2,V_2)\right)\mapsto A_1\otimes A_2
\end{align*}
is finite if 

$\bullet$ $g=2a$ is even and $a\ge 3r-11$, 

$\bullet$ $g=2a+1$ is odd and $a\ge 3$ for $r=4$, or if

$\bullet$ $g=2a+1$ is odd and $\frac{a}{a+1}\binom{2a+2}{a}\ge\binom{\delta_r-1}{2}$ for $r\ge 5$. 
\end{lemma}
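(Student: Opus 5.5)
The plan is to use that $\mu$ is proper, being a morphism between projective varieties. Finiteness is then equivalent to quasi-finiteness, so it suffices to show that no fiber $\mu^{-1}(L)$ contains a positive-dimensional irreducible component $\Gamma$.

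By \autoref{character of rank 3 and 4}, every point $\left((A_1,V_1),(A_2,V_2)\right)$ of such a fiber gives a quadric of rank at most $4$ in $\PP\ker\nu(L)$. Here $h^0(C,L)=r+1$ on $\grond_{C,a_1,a_2}$, by the upper bound from \autoref{deg locus in sum of pencils} and the fact that $h^0(C,A_1\otimes A_2)=r+1$ there. Conversely, the rulings of such a quadric recover the unordered pair of pencils. So $\Gamma$ maps with finite fibers onto a positive-dimensional family of rank $4$ quadrics in the linear space $\Lambda:=\PP\ker\nu(L)$. I would then split according to the parity of $g$, since the minimal-degree factor $A_1\in W^1_{a_1}(C)$ behaves very differently in the two cases.

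\textbf{Even genus $g=2a$.} Here $a_1=a+1$ and $W^1_{a+1}(C)=G^1_{a+1}(C)$ is finite and reduced. On an irreducible $\Gamma$ the pencil $A_1$ is therefore constant, so $A_2=L\otimes A_1^{-1}$ is also constant. The only remaining freedom is $V_2\in G(2,H^0(C,A_2))$, and this Grassmannian is a point unless $h^0(C,A_2)\ge 3$. Now $a_2=d-a_1=a+r-2$, so
$$\rho(g,2,a_2)=2a-3(a-r+4)=3r-12-a,$$
which is negative precisely when $a\ge 3r-11$. Under this hypothesis $W^2_{a_2}(C)=\emptyset$ for general $C$, every fiber has at most $w^1_{a+1}$ points, and $\mu$ is finite.

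\textbf{Odd genus $g=2a+1$.} Now $a_1=a+2$, and $W^1_{a+2}(C)$ is a smooth connected curve of genus $1+\frac{a}{a+1}\binom{2a+2}{a}$. I would first dispose of the subcase where $A_1$ is constant on $\Gamma$, which is the same situation as in the even case. This requires excluding positive-dimensional families of $V_2\in G(2,H^0(C,A_2))$ with $h^0(C,A_2)\ge 3$ and $h^0(C,A_1\otimes A_2)\ge r+1$. For this I would run the dimension count of subsection 5.3 with $\eps_2\ge 1$ inside a single fiber of $\mu$, using \autoref{super special secant divisors}. For $r=4$ with $a\ge 3$ one also has $\rho(g,2,a_2)<0$, and the issue disappears.

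Otherwise $\Gamma$ dominates the irreducible curve $W^1_{a+2}(C)$. The injective assignment $A_1\mapsto$ (quadric) then exhibits a curve $\Gamma'\subseteq\Sigma_4(\PP^r)\cap\Lambda$ that is birationally dominated by $W^1_{a+2}(C)$, hence of geometric genus at least $g\left(W^1_{a+2}(C)\right)$. Cutting $\Lambda$ by a general linear subspace until the intersection with $\Sigma_4$ has expected dimension $1$, refined Bézout gives $\deg\Gamma'\le\deg\Sigma_4(\PP^r)=\delta_r$. A reduced irreducible curve of degree $\delta_r$ has geometric genus at most $\binom{\delta_r-1}{2}$, obtained by projecting to a plane. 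This contradicts the hypothesis $\frac{a}{a+1}\binom{2a+2}{a}\ge\binom{\delta_r-1}{2}$. For $r=4$ we have $\delta_4=5$ and $\binom{4}{2}=6$, which is beaten already for $a\ge 3$.

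The step I expect to be the main obstacle is the degree bound $\deg\Gamma'\le\delta_r$ when $\Lambda\cap\Sigma_4$ is not of expected dimension. A second difficulty is ruling out the $V_2$-families in the odd case when $\rho(g,2,a_2)\ge 0$. For the former I would first try a general linear section of $\Lambda$ through a general point of $\Gamma'$ in which $\Gamma'$ becomes a component of excess-free intersection.
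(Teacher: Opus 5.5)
\textbf{The gap is in the odd-genus case $r=4$, where your map from $W^1_{a+2}(C)$ to quadrics is $2{:}1$, not injective.} Your even-genus argument coincides with the paper's Step I. For odd genus with $r\ge 5$, your route is the paper's Case II: a curve of rank-$4$ quadrics birational to $W^1_{a+2}(C)$ inside $\PP\ker\nu(L)\cap\Sigma_4(\PP^r)$, of degree at most $\delta_r$, hence of geometric genus at most $\binom{\delta_r-1}{2}$.

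For $r=4$ you have $a_1=a_2=a+2$. So for every $A_1$ in the fiber, $L\otimes A_1^{-1}$ also lies in $W^1_{a+2}(C)$, and the two pencils are the two rulings of the \emph{same} quadric. The assignment $A_1\mapsto q$ is therefore the quotient by the involution $\iota(A)=L\otimes A^{-1}$ of $W^1_{a+2}(C)$.

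Your phrase ``birationally dominated, hence of genus at least $g(W^1_{a+2}(C))$'' only holds if the map is birational. A curve merely dominated by $W^1_{a+2}(C)$ has genus at most $g(W^1_{a+2}(C))$. Birationality holds for $r\ge 5$, because the rulings have different degrees $a+2<a+r-2$, but it fails for $r=4$.

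The correct curve of quadrics is birational to $W^1_{a+2}(C)/\iota$. The involution $\iota$ is fixed-point-free: a fixed point $A^{\otimes 2}=L$ would give $h^0(C,L)=h^0(C,A^{\otimes 2})=4$ by Petri generality, whereas $h^0(C,L)\ge 5$. Riemann--Hurwitz then gives genus $1+\frac{a}{2a+2}\binom{2a+2}{a}$, roughly half of $g(W^1_{a+2}(C))$.

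The contradiction must come from comparing this number with $\binom{\delta_4-1}{2}=6$. It succeeds exactly when $a\ge 3$: the value is $22$ at $a=3$ and exactly $6$ at $a=2$. Your inequality $\frac{a}{a+1}\binom{2a+2}{a}\ge 6$ is already true at $a=2$, where the left side is $10$. Your remark that $6$ is beaten ``already for $a\ge 3$'' is an arithmetic slip that hides this.

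As written, your argument would prove finiteness for $(g,r,d)=(5,4,8)$, where it is false. There every pair $(A,\omega_C\otimes A^{-1})$ with $A\in W^1_4(C)$ lies in $\grond_{C,4,4}$ and maps to $\omega_C$.

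The paper handles this in Case I. Since $d=2a+4$ and the gonality is $a+2$, the rank-$\le 4$ quadrics in $\ker\nu(L)$ are exactly the points of $W^1_{a+2}(C)/\iota$. Since $\Sigma_4(\PP^4)$ is a quintic hypersurface, $\PP\ker\nu(L)$ must be a line or a plane. A line is impossible because the quotient is not rational. A plane gives a plane quintic of genus at most $6$, which forces $a=2$.

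Some smaller points:
\begin{itemize}
\item The degree bound you flag needs no expected-dimension hypothesis. By Fulton's refined B\'ezout theorem, the degrees of all irreducible components of $\PP\ker\nu(L)\cap\Sigma_4(\PP^r)$ sum to at most $\delta_r$. What is genuinely needed is that your curve be an irreducible component of this intersection, not merely contained in it. The paper takes this for granted. Slicing by linear spaces through a point of $\Gamma'$ does not address it, since such slices do not contain $\Gamma'$.
\item You do not need the dimension count of subsection 5.3 for the constant-$A_1$ subcase. The numerical hypothesis forces $a$ far above $3r-14$ (for example $a\ge 5$ when $r=5$). So $\rho(g,2,a+r-2)=3r-14-a<0$ and every $A_2$ is complete, just as in even genus.
\item In the non-constant subcase, $h^0(C,L)=r+1$ follows from \autoref{no quads of rk 4 generically} applied to a general $A_1\in W^1_{a+2}(C)$. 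It does not follow from the dimension bound of \autoref{deg locus in sum of pencils}.
\end{itemize}
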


\vspace{0.2cm}

The lemma implies the special range $s=-1$ when the genus $g$ is large enough. Indeed, not only that $\grond_{C,a_1,a_2}\to W^r_d(C)$ is a finite map, but we also infer from \autoref{deg locus in sum of pencils} that there are points in $\grond_{C,a_1,a_2}$ such that $h^0(C,A_1\otimes A_2)=r+1$, so $V=H^0(C,A_1\otimes A_2)$ is uniquely determined. Consequently, $\grond_{C,a_1,a_2}$ and $\irond_{C,a_1,a_2}^4$ are birationally equivalent and the commutative diagram below
\[\begin{tikzcd}
	{\irond_{C,a_1,a_2}^4} & {Q^r_{d,4}(C)} \\
	{\grond_{C,a_1,a_2}} & {W^r_d(C)}
	\arrow["{\textnormal{forget }q}", from=1-1, to=1-2]
	\arrow["{\textnormal{forget } V}"', two heads, from=1-1, to=2-1]
	\arrow["{\textnormal{forget } V}", from=1-2, to=2-2]
	\arrow["\mu", from=2-1, to=2-2]
\end{tikzcd}\]
implies $Q^r_{d,4}(C)$ contains a component of the expected dimension. 

\vspace{0.2cm}

\begin{proof}
[Proof of \autoref{finite projection}]
\textit{\underline{Step I}}: We deal with the even genus $g=2a$ case first. Here, $a_1=a+1$ and the condition $a\ge 3r-11$ translates into $\rho(g,2,a_2)<0$. Consequently, any point in $\grond_{C,a_1,a_2}$ corresponds to two complete linear series $A_1\in G^1_{a+1}(C)$ and $A_2\in G^1_{a_2}(C)$. From the description of $\grond_{C,a_1,a_2}$ given by \autoref{deg locus in sum of pencils}, we know that to each of the finitely many points $A_1$ in $G^1_{a+1}(C)$ corresponds an $(r-3)$-dimensional family $\frond(A_1)$ of linear series $A_2\in G^1_{a_2}(C)$ such that $h^0(C,A_1\otimes A_2)=r+1$, all of which we just argued they must be complete. Fixing one such $A_1$, the map $\frond(A_1)\to W^r_d(C)$ given by $A_2\mapsto A_1\otimes A_2$ is injective, so its image is also $(r-3)$-dimensional; this holds for all $A_1\in G^1_{a+1}(C)$. The even genus case has been proved. 
\vspace{0.2cm}

\textit{\underline{Step II}}: We continue with the odd genus $g=2a+1$ situation, when $r\ge 4$. Here, $a_1=a+2$, $a_2=a+r-2$ and, if $r\ge 5$, we have the extra condition $\frac{a}{a+1}\binom{2a+2}{a}\ge \binom{\delta_r-1}{2}$. All points in $G^1_{a+2}(C)$ are complete linear series. 

Assume that the map $\grond_{C,a+2,a+r-2}\to W^r_{2a+r}(C)$ is not finite. Then there is a line bundle $L\in W^r_{2a+r}(C)$ that can be written as $L=A\otimes B$ for infinitely many pairs $(A,B)\in \grond_{C,a+2,a+r-2}$. Fixing one such pair $(A_0,B_0)$, it follows that the embeddings 
\begin{align*}
    W^1_{a+2}(C)&\injto \textnormal{Jac}(C),\textnormal{ given by }A\mapsto A\otimes A_0^{-1}\textnormal{ and}\\
    W^1_{a+r-2}(C)&\injto \textnormal{Jac}(C),\textnormal{ given by }B\mapsto B_0\otimes B^{-1}
\end{align*}
intersect in infinitely many points. Now, we separate the argument in two parts.
\vspace{0.2cm}

$\bullet$ \textit{Case I}: $r=4$ and $a\ge 3$. The above can happen only if the embedded images of $W^1_{a+2}(C)$ coincide. In other words, the line bundle $L\in W^4_{2a+4}(C)$ gives an involution 
$$\iota:W^1_{a+2}(C)\to W^1_{a+2}(C)\textnormal{ defined by }\iota(A)=L\otimes A^{-1}.$$
We remark two things. First, this implies $L$ can be written $L=A\otimes B$ such that $A\in W^1_{a+2}(C)$ is generic, so $h^0(C,L)=5$ holds, thanks to the description of $\grond_{C,a+2,a+2}$ given by \autoref{deg locus in sum of pencils}. Second, $\iota$ cannot have fixed points, since the curve $C$ is Petri general. Therefore, the quotient $W^1_{a+2}(C)/\iota$ is a smooth curve that parametrizes the rank $4$ quadrics in $\ker\nu(L)$, where $\nu(L)$ is the multiplication map 
$$\nu(L):\underbrace{\Sym^2 H^0(C,L)}_{\textnormal{dimension 15}}\to \underbrace{H^0\left(C,L^2\right)}_{\textnormal{dimension }2a+8}.$$
This curve $W^1_{a+2}(C)/\iota$ is the intersection of $\PP\ker\nu(L)$ with $\Sigma_4(\PP^4)$ in $\PP \Sym^2 H^0(C,L)\cong \PP^{14}$. Since $\Sigma_4(\PP^4)$ is of codimension $1$ in $\PP^{14}$, the only way in which this could happen is if $\PP\ker\nu(L)\cong \PP^{\ell}$ is such that $\ell\in \{1,2\}$. Now, the genus of $W^1_{a+2}(C)/\iota$ is given by 
$$g\left(W^1_{a+2}(C)/\iota\right)=1+\frac{1}{2}\left(g\left(W^1_{a+2}(C)\right)-1\right)=1+\frac{a}{2a+2}\binom{2a+2}{a},$$
so $W^1_{a+2}(C)/\iota$ is not rational and it follows that $\ell=1$ cannot hold. If $\ell=2$, as $\deg Q_4(\PP^4)=5$, we deduce that $W^1_{a+2}(C)/\iota$ is a smooth curve of degree $5$ in $\PP\ker\nu(L)\cong \PP^2$. Consequently, the genus of $W^1_{a+2}(C)/\iota$ is $\binom{5-1}{2}=6$, which could only hold if $a=2$. This is a contradiction. 
\vspace{0.2cm}

$\bullet$ \textit{Case II}: $r\ge 5$ and $\frac{a}{a+1}\binom{2a+2}{a}\ge \binom{\delta_r-1}{2}$. The condition means that the embedded image of $W^1_{a+2}(C)$ is fully contained in the embedded image of $W^1_{a+r-2}(C)$. In other words, the line bundle $L\in W^r_{2a+r}(C)$ gives an embedding 
$$\iota:W^1_{a+2}(C)\injto W^1_{a+r-2}(C)\textnormal{ defined by }\iota(A)=L\otimes A^{-1}.$$
As in the previous case, this implies $h^0(C,L)=r+1$. On the other hand, the embedding $\iota$ gives here only a component of the locus that parametrizes rank $4$ quadrics in $\ker\nu(L)$, where $\nu(L)$ is the multiplication map 
$$\nu(L):\underbrace{\Sym^2 H^0(C,L)}_{\textnormal{dimension }\binom{r+2}{2}}\to \underbrace{H^0\left(C,L^2\right)}_{\textnormal{dimension }2a+2r}.$$
This component is a curve isomorphic to $W^1_{a+2}(C)$ and it parametrizes rank $4$ quadrics that have a ruling of minimal degree $a+2$. In particular, the intersection $\PP\ker\nu(L)\cap \Sigma_4(\PP^r)$ must have a $1$-dimensional component, so $\PP\ker \nu(L)\cong \PP^\ell$ for some $1\le \ell\le \binom{r-2}{2}+1$, since $\Sigma_4(\PP^r)$ itself is of codimension $\binom{r-2}{2}$ in $\PP \Sym^2 H^0(C,L)$. If $\ell'\le \ell$ is the smallest dimension of a hyperplane in $\PP^\ell$ that contains the isomorphic image of the curve $W^1_{a+2}(C)$, then this curve is nondegenerate in $\PP^{\ell'}$ and it is clear that $\ell'\ne 1$. Furthermore, the degree of the curve is not greater than $\delta_r=\deg \Sigma_4(\PP^r)$. Using Castelnuovo's bound on nondegenerate curves in a fixed projective space having a fixed degree, we see that the highest bound on the genus is given in the projective plane, so we obtain the estimate 
$$1+\frac{a}{a+1}\binom{2a+2}{a}=g\left(W^1_{a+2}(C)\right)\le \binom{\delta_r-1}{2}.$$
This is a contradiction with our assumption. The conclusion follows. 
\end{proof}

We now also present a similar statement for nonspecial series, when $s=d-g-r\ge 0$. 

\begin{lemma}\label{finite projection 2}
Let $g,r,d$ be positive integers such that $d=g+r+s$ with $s\ge 0$, $r\ge 4$ and let $\ceiling{g/2}+1=a_1\le a_2$ be such that $a_1+a_2=d$. If $C$ is a general curve of genus $g$, then the map 
$$\nu:G^1_{a_1}(C)\times G^1_{a_2}(C)\to W^{r+s}_d(C)\textnormal{ given by }\left((A_1,V_1),(A_2,V_2)\right)\mapsto A_1\otimes A_2$$
is finite if 

$\bullet$ $g=2a$ is even and $g\ge 2r+s-4$, or

$\bullet$ $g=2a+1$ is odd and $\frac{a}{a+1}\binom{2a+2}{a}\ge \binom{\delta_{r+s}-1}{2}$. 
\end{lemma}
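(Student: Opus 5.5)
We will mirror the proof of \autoref{finite projection}, with the roles of $r$ and $d$ adjusted to the nonspecial range. First we record the basic facts. Since $s\ge 0$ and $C$ is Petri general, $h^1(C,A_1\otimes A_2)=0$ for a general pair of pencils. In fact $a_2=d-a_1\ge g+r+s-\ceiling{g/2}-1$, and whenever $\deg(A_1\otimes A_2)\ge 2g-1$ the line bundle is nonspecial for every pair. In the remaining cases we may use the stratification of subsection 5.3 (via \autoref{super special secant divisors}) to see that the special pairs form a proper closed subset. Finiteness of $\nu$ is a statement about positive-dimensional fibers, so we argue by contradiction. Suppose some $L\in W^{r+s}_d(C)$ has $L=A\otimes B$ for infinitely many pairs $(A,B)\in G^1_{a_1}(C)\times G^1_{a_2}(C)$. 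Fixing one pair $(A_0,B_0)$, the translates $A\mapsto A\otimes A_0^{-1}$ of $W^1_{a_1}(C)$ and $B\mapsto B_0\otimes B^{-1}$ of $W^1_{a_2}(C)$ in $\textnormal{Jac}(C)$ then meet in infinitely many points.

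\textbf{Even genus} $g=2a$. Here $a_1=a+1$, and $W^1_{a+1}(C)$ is finite and reduced, so the translate of $W^1_{a_1}(C)$ is a finite set. Infinitely many pairs with fixed $L$ would therefore force a single $A_1$ to have infinitely many $A_2$ with $A_1\otimes A_2=L$. That is impossible, since $A_2=L\otimes A_1^{-1}$ is determined. The only subtlety is that different $V_2\subseteq H^0(A_2)$ give different points of $G^1_{a_2}(C)$. So we use $g\ge 2r+s-4$, which is meant to force $\rho(g,2,a_2)<0$: we have $a_2=a+r+s-1$ and
$$\rho(g,2,a_2)=3a_2-2g-6=3(r+s-1)-a-6<0$$
once $a\ge 3r+3s-9$ or so. I will check that the stated bound exactly gives $h^0(A_2)=2$, so that every $A_2$ is complete and $\nu$ is injective on each of the finitely many slices $\{A_1\}\times G^1_{a_2}(C)$. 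If the arithmetic does not match exactly, I would instead use \autoref{super special secant divisors} to bound the locus where $h^0(A_2)\ge 3$ and restrict to components.

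\textbf{Odd genus} $g=2a+1$. Now $a_1=a+2$ and $W^1_{a+2}(C)$ is a smooth irreducible curve of genus $1+\frac{a}{a+1}\binom{2a+2}{a}$. An infinite intersection forces the whole translate of $W^1_{a+2}(C)$ to lie inside the translate of $W^1_{a_2}(C)$. This gives an embedding $\iota:W^1_{a+2}(C)\injto W^1_{a_2}(C)$, $A\mapsto L\otimes A^{-1}$, and hence a one-dimensional family of rank $4$ quadrics in $\PP\ker\nu(L)$, with one quadric for each ruling $A$; distinct $A$ give distinct quadrics by \autoref{character of rank 3 and 4}. Since $L$ is nonspecial with $h^0(L)=r+s+1$, these quadrics live in $\PP\Sym^2 H^0(C,L)$ over $\PP^{r+s}$. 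The curve is a component of $\PP\ker\nu(L)\cap\Sigma_4(\PP^{r+s})$, so its degree is at most $\delta_{r+s}$. It is nondegenerate in some linear span of dimension $\ell'\ge 2$, since it cannot be a line given its genus. Castelnuovo's bound then gives $1+\frac{a}{a+1}\binom{2a+2}{a}\le\binom{\delta_{r+s}-1}{2}$, which contradicts the hypothesis.

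\textbf{Main obstacle.} The hardest step is making the degree bound rigorous. One must know that the intersection has a one-dimensional component isomorphic to $W^1_{a+2}(C)$, so that its degree is bounded by $\delta_{r+s}$ via a refined Bézout inequality for components of a linear section, and one must check that the map from $W^1_{a+2}(C)$ to quadrics is an embedding rather than merely injective. I would handle the latter exactly as in \autoref{finite projection}, citing \autoref{character of rank 3 and 4} for the bijection and smoothness of $W^1_{a+2}(C)$ for the rest.
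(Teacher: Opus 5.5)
\textbf{Verdict: genuine gap in the even-genus case.} Your odd-genus argument is the one the paper uses; its Step II simply refers back to Step II, Case II of \autoref{finite projection}. The even-genus case fails at exactly the step you flagged.

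The hypothesis $g\ge 2r+s-4$ does not give $h^0(C,A_2)=2$. With $g=2a$ and $a_2=a+r+s-1$ you get $\rho(g,2,a_2)=3(r+s)-9-a$. This is negative only when $a\ge 3(r+s)-8$, whereas $g\ge 2r+s-4$ only gives $a\ge r+s/2-2$.

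Neither \autoref{super special secant divisors} nor a restriction to components can repair this, because finiteness of $\nu$ on all of $G^1_{a_1}(C)\times G^1_{a_2}(C)$ really fails in that range. Whenever some $A_2\in W^2_{a_2}(C)$ exists, the fiber over $A_1\otimes A_2$ contains $\{(A_1,V_1)\}\times G(2,H^0(C,A_2))$. Two concrete cases:
\begin{itemize}
\item For $(g,r,s)=(6,4,0)$ one has $a_2=6$ and $W^2_6(C)\neq\emptyset$, so some fiber contains a copy of $G(2,3)$.
\item For $(g,r,s)=(8,4,4)$ one has $a_2=11\ge g+2$, so every $A_2$ has $h^0\ge 4$. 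Indeed $\dim G^1_5(C)\times G^1_{11}(C)=12>8=\dim \Pic^{16}(C)$.
\end{itemize}

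\textbf{The missing idea.} The bound $g\ge 2r+s-4$ concerns the subspace $V$, not $A_2$. What the paper's proof actually establishes, and what the diagram that follows the lemma needs, is generic finiteness of the forgetful map $\irond^4_{C,a_1,a_2}\to Q^r_{d,4}(C)$. The argument runs as follows.
\begin{itemize}
\item Fix a minimal pencil $A_1$; there are finitely many.
\item Take $(A_2,V_2)$ general and set $L=A_1\otimes A_2$, so $h^0(C,L)=d+1-g$.
\item Take $V\in G(r+1,H^0(C,L))$ general among subspaces containing $V_1\otimes V_2$.
\end{itemize}
Then $A_2=L\otimes A_1^{-1}$ is forced by $(L,V)$. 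If $a_2\le g+1$, generality gives $V_2=H^0(C,A_2)$. If $a_2\ge g+2$, $V_2$ is recovered as soon as $V\cap(V_1\otimes H^0(C,A_2))=V_1\otimes V_2$. Since
\[
\dim V+\dim\bigl(V_1\otimes H^0(C,A_2)\bigr)-h^0(C,L)=(r+1)+2(a_2+1-g)-(d+1-g)=2r+s-g,
\]
the hypothesis $g\ge 2r+s-4$ is precisely what keeps this intersection equal to the $4$-dimensional $V_1\otimes V_2$ for general $V$.

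\textbf{A smaller point on your odd case.} Going from infinitely many points of $G^1_{a_1}(C)\times G^1_{a_2}(C)$ to infinitely many line-bundle pairs (your translates in the Jacobian) requires every $A_2\in W^1_{a_2}(C)$ to be complete. There $\rho(g,2,a_2)=3(r+s)-11-a$, and the numerical hypothesis on $a$ does force this to be negative, but you need to say so.
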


Once this is established, then the nonspecial range from \autoref{SSMRC rank 4} follows as we described earlier in this subsection, with the following remark. Since the generic point of $G^1_{a_1}(C)\times G^1_{a_2}(C)$ satisfies $h^0(C,A_1\otimes A_2)=r+s+1$ and the same is true for $W^{r+s}_d(C)$, the commutative diagram 
\[\begin{tikzcd}
	{\irond_{C,a_1,a_2}^4} & {Q^r_{d,4}(C)} \\
	{G^1_{a_1}(C)\times G^1_{a_2}(C)} & {W^{r+s}_{d}(C)}
	\arrow["{\textnormal{forget }q}", from=1-1, to=1-2]
	\arrow["{\textnormal{forget }V}"', from=1-1, to=2-1]
	\arrow["{\textnormal{forget }V}", from=1-2, to=2-2]
	\arrow["\mu", from=2-1, to=2-2]
\end{tikzcd}\]
implies $Q^r_{d,4}(C)$ contains a component of the expected dimension $\beta(g,r,d,4)$. 

\begin{proof}
[Proof of \autoref{finite projection 2}]
\underline{\textit{Step I}}: We present again the even genus $g=2a$ first. Here, $a_1=a+1$ and $a_2=a+r+s-1$. Let $(A_1,V_1)\in G^1_{a+1}(C)$ be any minimal pencil, which in particular is complete. Then pick a generic point $(A_2,V_2)$ in $G^1_{a_2}(C)$ and set $L=A_1\otimes A_2$. If $V\in G(r+1,H^0(C,L))$ is generic with respect to the condition that it contains the image of $V_1\otimes V_2$, we claim $(L,V)$ cannot come from $(A_1,V_1)\in G^1_{a+1}(C)$ with any other pair $(A_2',V_2')\in G^1_{a_2}(C)$. 

First, observe that $h^0(C,L)=d+1-g$ as an immediate consequence of the generality of $(A_2,V_2)\in G^1_{a_2}(C)$. Second, if $(L,V)$ were to come from $(A_1,V_1)$ with any other pair $(A_2',V_2')\in G^1_{a_2}(C)$, then $L=A_1\otimes A_2'$ would be needed, and this forces $A_2=A_2'$. 
\vspace{0.2cm}

Now, if $a_2\le g+1$, then the generality of $(A_2,V_2)\in G^1_{a_2}(C)$ gives $V_2=H^0(C,A_2)$, so $V_2$ is uniquely determined and we are done. 

Else, $a_2\ge g+2$ and the same generality assumption $(A_2,V_2)\in G^1_{a_2}(C)$ implies $h^0(C,A_2)=a_2+1-g$. To prevent $(L,V)$ to come from $(A_1,V_1)$ with any other pair $(A_2,V_2')\in G^1_{a_2}(C)$, it suffices to ensure 
$$V\cap \left(V_1\otimes H^0(C,A_2)\right)=V_1\otimes V_2,$$
as vector subspaces of $H^0(C,L)$. But the bound $g\ge 2r+s-4$ implies $(r+1)+2(a_2+1-g)-(d+1-g)\le 4$, so the generic $V\in G(r+1,H^0(C,L))$ containing $V_1\otimes V_2$ contains no other vector spaces of the form $V_1\otimes V_2'$, for any $V_2'\in G(2,H^0(C,A_2))$. 

\underline{\textit{Step II}}: The odd genus $g=2a+1$ case follows the exact same argument as in \textit{Step II, Case II} from the proof of \autoref{finite projection} and hence we omit it. 
\end{proof}

\section{A Component that Dominates the Jacobian}

In this section, we prove \autoref{SMRC r ge 5}. In the hypothesis of this theorem, the expected dimension of $Q^r_d(C)$ can be computed to 
$$\alpha(g,r,d)=\rho(g,r,d)-1-(2d+1-g)+\binom{r+2}{2}=q(g,d-g,d,r+1)+g,$$
where we recall that $q(g,d-g,d,r+1)$ is the expected dimension of the variety $\Sigma_{r+1}(C,\ell)$ of quadrics of rank at most $r+1$ containing the image of a general curve $C$ via a linear series $\ell\in G^{d-g}_d(C)$. We also recall that in \cite{Kadikoylu19} it is shown that, if the linear series $\ell\in G^{d-g}_d(C)$ is further assumed to be general, then the expected dimension $q(g,d-g,d,r+1)$ is the actual dimension of $\Sigma_{r+1}(C,\ell)$. Using these observations we show, under the assumptions $d>g+r$ and $\binom{r+2}{2}\le 2d+1-g$, that the variety
$$Q^r_d(C)=\{\ell\in G^r_d(C)\mid \nu(\ell)\textnormal{ is not of maximal rank}\}$$
contains a component of the expected dimension $\alpha(g,r,d)$ that maps dominantly onto $\Pic^d(C)$. We know that all irreducible components of $Q^r_d(C)$ have dimension at least $\alpha(g,r,d)$, thanks to its determinantal description. Thus, it suffices to construct a dominant component having dimension at most $\alpha(g,r,d)$. 

\begin{proof}
[Proof of \autoref{SMRC r ge 5}]
For $L\in \Pic^d(C)$, and $3\le k\le r+1$, let $Q^{r,k}(C,L)$ denote the locus in $G(r+1,H^0(C,L))$ of those $V$ for which $\ker\nu(L,V)$ contains a quadric of rank $k$. If $L\in \Pic^d(C)$ is general, we show that $\dim Q^{r,k}(C,L)\le q(g,d-g,d,r+1)$ for all $3\le k\le r+1$. This establishes the existence of a component of $Q^r_{d}(C)$ of dimension at most $q(g,d-g,d,r+1)+\dim \Pic^d(C)=\alpha(g,r,d)$ that dominates $\Pic^d(C)$, as required. 

So let $L\in \Pic^d(C)$ be general. Then $L$ is nonspecial, very ample and embeds $\phi_L:C\injto \PP^{d-g}$, and the multiplication map 
$$\nu(L):\Sym ^2H^0(C,L)\to H^0\left(C,L^2\right)$$
is surjective. Also, \cite{Kadikoylu19} tells us that $\Sigma_k(C,L)$ is of the expected dimension $q(g,d-g,d,k)$, for all $3\le k\le d-g+1$. In particular, as long as $q(g,d-g,d,k)\ge 0$, then $\widetilde{\Sigma}_k(C,L)=\Sigma_k(C,L)\setminus \Sigma_{k-1}(C,L)$ is nonempty and of dimension $q(g,d-g,d,k)$. We adopt the notation $\widetilde{\Sigma}_k(C,\ell)$ for any linear series $\ell$ on $C$ in a similar manner. 
\vspace{0.2cm}

Let $k'$ be the smallest $k\in\{3,4,\dots,r+1\}$ for which $\alpha(g,k-1,d)-g=q(g,d-g,d,k)\ge 0$. Note that $k'$ exists, since $\alpha(g,r,d)-g\ge 0$. For every $k\in\{k',k'+1,\dots,r+1\}$, consider the incidence variety 
\[\begin{tikzcd}
	& {\mathcal{I}_{L}^{r,k}=\left\{(V,q)\mid V\in Q^{r,k}(C,L)\textnormal{ and }q\in \widetilde{\Sigma}_k(C,(L,V))\right\}} & \\
	{Q^{r,k}(C,L)} && {\widetilde{\Sigma}_k(C,L)}
	\arrow["{\pi_1}", from=1-2, to=2-1]
	\arrow["{\pi_2}"', from=1-2, to=2-3]
\end{tikzcd}\]
The second projection makes sense in the following manner. For any point $(V,q)\in\irond_{L}^{r,k}$, the quadric $q\in \ker\nu(L,V)$ is of rank $k$ and can be naturally seen as a quadric $q\in \ker\nu(L)$ that is of the same rank $k$. Indeed, one only needs to consider the commutative diagram 
\[\begin{tikzcd}
	{\ker\nu(L,V)} & {\ker\nu(L)} \\
	{\Sym^2 V} & {\Sym^2 H^0(C,L)}
	\arrow[hook, from=1-1, to=1-2]
	\arrow[hook, from=1-1, to=2-1]
	\arrow[hook, from=1-2, to=2-2]
	\arrow[hook, from=2-1, to=2-2]
\end{tikzcd}\]
Now let $q\in \widetilde{\Sigma}_k(C,L)$ be any quadric of rank $k$ in $\ker\nu(L)$. Since $\rk(q)=k$, one can find linearly independent sections $s_1,s_2,\dots,s_k\in H^0(C,L)$ such that $q$ is in its normal form 
$$q=s_1\otimes s_1+s_2\otimes s_2+\dots +s_k\otimes s_k\in \Sym^2 H^0(C,L).$$ 
For $q$ to be image of a point $(V,q)$ in $\irond_L^{r,k}$, it is needed that $V\in Q^{r,k}(C,L)$ contains the vector space $W_q$ spanned by $s_1,s_2,\dots,s_k$. After this requirement is met, the remaining $r+1-k$ sections of $V$ can be chosen freely. In other words, the fiber over $q\in \widetilde{\Sigma}_k(C,L)$ is the Grassmannian $G(r+1-k,H^0(C,L)/W_q)\cong G(r+1-k,d-g+1-k)$. It follows that 
\begin{align*}
    \dim \irond_L^{r,k}&=\dim \widetilde{\Sigma}_k(C,L)+\dim G(r+1-k,d-g+1-k)=\\
    &=q(g,d-g,d,k)+(r+1-k)(d-g-r)\le q(g,d-g,d,r+1).
\end{align*}
The projection $\pi_1$ sets the immediate upper bound $\dim Q^{r,k}(C,L)\le \dim \irond_L^{r,k}\le q(g,d-g,d,r+1)$, as required. This finishes the proof of the theorem. 
\end{proof}

\section{The Strong Maximal Rank Conjecture in $\PP^4$}

In this section, we prove \autoref{SMRC r=4}. Let $C$ be a general curve of genus $g\ge 1$ and let $\ell=(L,V)$ be a $g^4_d$ on $C$, with $d\ge g+3$. Consider the associated multiplication map $\nu(\ell):\Sym^2 V\to H^0\left(C,L^2\right)$. Under the generality hypothesis $\rho(g,4,d)\ge 0$, one can easily check that the only instances when $s=d-g-4\in\{0,-1\}$ and $\dim_\CC\Sym^2 V=15$ is greater than $2d+1-g=\dim_\CC H^0\left(C,L^2\right)$ are
$$(g,d)\in\underbrace{\{(1,5),(2,6),(3,7),(4,8),(5,9)\}}_{d-g=4}\,\cup\,\underbrace{\{(5,8),(6,9),(7,10)\}}_{d-g=3}.$$
For $(g,d)\in\{(1,5),(2,6),(3,7)\}\cup \{(5,8),(6,9),(7,10)\}$, we are in the situation described at \eqref{GL degree bound} in Section 2, so SMRC holds, with the following exception: when $(g,d)=(7,10)$, the variety $Q^4_{10}(C)$ has expected dimension $\alpha(7,4,10)=0$, and it is in fact empty. As for $(g,d)\in\{(4,8),(5,9)\}$, we already treated them separately in the discussion around \eqref{dim of non very ample} in the same Section 2. The remaining $4$ cases for which we are able to make a complete statement and are mentioned separately in \autoref{SMRC r=4} are dealt with in the remark below. 

\begin{remark}
[the divisorial case for $Q^r_d(C)$ when $s\ge -1$]
Notice that the nonemptiness part in \autoref{deg locus in sum of pencils} implies \autoref{existence for Q^r_d,k(C)}. In particular, as long as $s=d-g-r\ge -1$ and $g\ge \binom{r}{2}-2s$, the locus $Q^r_d(C)$ is nonempty. Furthermore, when $\binom{r+2}{2}=2d+1-g$, the expected dimension of $Q^r_d(C)$ is 
$$\alpha(g,r,d)=\rho(g,r,d)-1.$$
The determinantal structure of $Q^r_d(C)$ tells us that all irreducible components of $Q^r_d(C)$ have dimension at least $\rho(g,r,d)-1$. Since we just argued that $Q^r_d(C)$ is nonempty and, as a consequence of \cite{Larson17}, we know $Q^r_d(C)\subsetneq G^r_d(C)$ is a proper subset, we deduce that in this particular instances the Strong Maximal Rank Conjecture holds i.e. $\dim Q^r_d(C)=\alpha(g,r,d)$. For $r=4$, the instances at hand are precisely $(g,d)\in \{(8,11),(6,10),(4,9),(2,8)\}$. 
\end{remark}

In what follows, we concern ourselves only with the injective area, that is, $\dim_\CC\Sym^2 V=15\le 2d+1-g=h^0\left(C,L^2\right)$. We need two preparatory lemmas. 

\begin{lemma}\label{unique and smooth quadric}
Let $C$ be a general curve of genus $g$ and let $d$ be a positive integer such that $2d+1-g\ge 15$ and $d\ge g+3$. Then for any component $\Sigma$ of $Q_d^4(C)$, the general element $\ell\in \Sigma$ has the property that the kernel of the multiplication map $\nu(\ell)$ is one dimensional and consists of a smooth quadric.
\end{lemma}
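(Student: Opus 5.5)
The plan is a dimension count over an incidence variety. By its determinantal description, every component $\Sigma$ of $Q^4_d(C)$ has dimension at least $\alpha(g,4,d)=\rho(g,4,d)-16+2d-g$. It therefore suffices to show that each of the following two "bad" loci in $G^4_d(C)$ has dimension strictly less than $\alpha(g,4,d)$:
\begin{itemize}
\item[(i)] the locus where $\dim\ker\nu(\ell)\ge 2$;
\item[(ii)] the locus where $\ker\nu(\ell)$ contains a quadric of rank at most $4$.
\end{itemize}
Once this is done, the general $\ell\in\Sigma$ lies outside both loci. Its kernel is then one-dimensional, spanned by a quadric of rank $5$, which is a smooth quadric in $\PP^4$.

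\textbf{Locus (ii).} This is controlled by the incidence varieties already computed. For rank $3$, \autoref{incidence variety for rank 3} gives dimension $3(s+1)-1$ or $3s$, with $s=d-g-4\ge -1$. For rank $4$, \autoref{incidence variety for rank 4 quadrics} with $r=4$ gives $\dim\irond^4_C=3s+4$. Since the projection to $G^4_d(C)$ is surjective onto the locus, it is bounded by $3s+4$. On the other hand,
\[
\alpha(g,4,d)=g+5s-16+2d-g=5s+2d-16,
\]
and $2d+1-g\ge 15$ gives $2d\ge g+14$, so $\alpha\ge 5s+g-2$. Using $d=g+4+s$, the inequality $3s+4<\alpha$ then reduces to a linear inequality in $g$ and $s$. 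This inequality holds once $g$ is not tiny; the finitely many small pairs $(g,d)$ must be checked by hand against the hypotheses $d\ge g+3$ and $2d+1-g\ge 15$.

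\textbf{Locus (i).} I would stratify by $m=\dim\ker\nu(\ell)$, which means the map $\Sym^2\erond\to\frond$ has corank of the source at least $m$. If every quadric in a pencil inside $\ker\nu(\ell)$ had rank $5$, the pencil would still meet the discriminant hypersurface $\Sigma_4(\PP^4)$, since that hypersurface has degree $5$. Hence every $\ell$ with $\dim\ker\nu(\ell)\ge 2$ already lies in locus (ii). So locus (i) is contained in locus (ii) and is handled by the same bound. This observation is the key simplification.

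\textbf{Main obstacle.} The hardest part is verifying $\dim(\text{ii})<\alpha(g,4,d)$ uniformly. In particular, for $s=-1$ I need the sharper $\dim\irond^4_C\le 1$ from \autoref{incidence variety for rank 4 quadrics}, together with $\alpha\ge 1$ in the range allowed by $2d+1-g\ge 15$, which should force $g$ to be moderately large. For the boundary cases where the numbers come out equal, I would try to exclude them by the hypothesis $2d+1-g\ge 15$ directly.
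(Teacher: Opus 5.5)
Your strategy is the same as the paper's. A pencil in $\PP\ker\nu(\ell)$ meets the quintic discriminant hypersurface $\Sigma_4(\PP^4)\subset\PP^{14}$, so locus (i) lies inside locus (ii). Locus (ii) is then bounded by $\dim\irond^4_C=3s+4$ from \autoref{incidence variety for rank 4 quadrics} and compared with the determinantal lower bound $\alpha(g,4,d)$ on every component. The gap is in this final comparison, because your value of $\alpha$ is wrong. The hypothesis $2d+1-g\ge 15$ puts you in the injective range, so the absolute value opens as $(2d+1-g)-15$, and
\[
\alpha(g,4,d)=\rho(g,4,d)-1-\bigl((2d+1-g)-15\bigr)=(g+5s)+13-2d+g=3(d-g)-7=3s+5,
\]
which is independent of $g$. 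Your expression $\rho-16+2d-g$ uses the opposite (surjective-range) sign. After substituting $d=g+4+s$ it becomes $7s+2g-8$, which wrongly grows with $g$.

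This error is not cosmetic. Take $(g,d)=(8,11)$, which satisfies the hypotheses with $2d+1-g=15$ and $s=-1$. Your formula gives $\alpha=1=3s+4$, so the strict inequality fails, and this case cannot be excluded by the hypothesis as you hoped. Likewise, at $s=-1$ you would need $\alpha\ge 2$, not $\alpha\ge 1$, to get a strict inequality. With the correct value, the required inequality is simply $3s+4<3s+5$ for every admissible $(g,d)$. There is then no case analysis and no ``main obstacle''. This is exactly the paper's one-line conclusion $\dim Q^4_{d,4}(C)\le 3(d-g)-8<3(d-g)-7$.
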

\begin{proof}
Let $\ell\in G^4_d(C)$ be such that $\dim \ker\nu(\ell)\ge 2$. If $\varphi=\varphi_\ell:C\to \PP^4$ is the induced map, then the image curve $\varphi(C)$ lies on a pencil of quadrics, so it contains at least one singular quadric. Therefore, it suffices to show that not lying on a singular quadric is a generic condition in any component of $Q^4_d(C)$. 

Since we work in $\PP^4$, singular quadrics are of rank at most $4$. From \autoref{incidence variety for rank 4 quadrics} it follows that $\dim Q^4_{d,4}(C)\le 3(d-g)-8$. On the other hand, the determinantal structure of $Q^4_{d}(C)$ implies that all its components are of dimension at least $\alpha(g,4,d)=3(d-g)-7$. This immediately proves the lemma. 
\end{proof}

\begin{lemma}\label{generic order for quadric at point}
Let $C$ be a genus $g\ge 1$ curve and let $(L,V)$ be a $g^4_d$ that maps $C$ to a curve $C_0$ in $\PP^4$. Suppose that the multiplication map 
$$\nu(L,V):\Sym^2 V\to H^0\left(C,L^2\right)$$
contains a smooth quadric $\rho$ in the kernel. Then $\ord_p(\rho)=2$ for the generic point $p$ on $C$. 
\end{lemma}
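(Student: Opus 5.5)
We read $\ord_p(\rho)$ as follows. Write $\rho$ as a symmetric bilinear form $B$ on $V^\vee$, so that $\rho(\varphi(p),\cdot)\in V$ is the section cutting out the tangent hyperplane $T_{\varphi(p)}\rho$ on $C$. Then $\ord_p(\rho)$ is the vanishing order at $p$ of this section, i.e. the contact order of $T_{\varphi(p)}\rho$ with $C_0$ at $\varphi(p)$. The plan is to argue locally, then turn the failure of the statement into a rigid geometric configuration on the smooth quadric threefold and rule it out.

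\textbf{Step 1 (local setup, $\ord_p\ge 2$).} Take a local analytic parametrization $x(t)$ of $C_0$ near a general point; we may assume it is unramified there and that $x,x',x''$ are linearly independent. If $C_0$ is a line or a conic, it spans at most a plane, and I would treat this degenerate case separately and first. Since $\rho\supset C_0$, we have $B(x,x)\equiv 0$. Differentiating gives $B(x,x')\equiv 0$, so $\ord_p\ge 2$ at every point. The claim is therefore that $B(x,x'')\not\equiv 0$. Suppose for contradiction that $B(x,x'')\equiv 0$, i.e. $\ord_p\ge 3$ everywhere.

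\textbf{Step 2 (isotropy of tangent lines).} Differentiating $B(x,x')\equiv 0$ gives $B(x',x')+B(x,x'')=0$, so $B(x',x')\equiv 0$. Differentiating again gives $B(x',x'')\equiv 0$. Differentiating $B(x,x'')\equiv 0$ then gives $B(x,x''')\equiv 0$.

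Consequently the tangent line $\ell_t=\langle x,x'\rangle$ is totally isotropic, i.e. lies on $\rho$. Moreover the osculating plane $\Pi_t=\langle x,x',x''\rangle$ is killed by both $B(x,\cdot)$ and $B(x',\cdot)$. Since $\rho$ is smooth, the polar $\ell_t^{\perp}$ is a plane, and we conclude $\Pi_t=\ell_t^{\perp}$. In other words, the tangent developable of $C_0$ lies on $\rho$, and every osculating plane equals the polar plane of the corresponding tangent line.

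\textbf{Step 3 (the obstacle: rule out this Legendrian configuration).} This is where I expect the real difficulty. Lines on a smooth $Q^3\subset\PP^4$ are parametrized by $\PP^3\cong LG(2,4)$, and curves whose tangent lines lie on $Q^3$ correspond to contact (Legendrian) curves. Such curves do exist: the rational normal quartic on its $SL_2$-invariant quadric $x_0x_4-4x_1x_3+3x_2^2$ satisfies $B(x',x')\equiv 0$. So the contradiction must use the hypotheses, namely genus $g\ge 1$, the fact that $C_0$ is the image of $C$ under a $g^4_d$, and possibly how the lemma is used later (for $\ell$ general in a component of $Q^4_d(C)$). My first attempt would be as follows.

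\begin{itemize}
\item Continue differentiating the identities of Step 2. The aim is to show that the osculating flag is forced to be $B$-self-polar to all orders, so that $\ord_p(\rho)\ge 4$ everywhere and $T_{\varphi(p)}\rho$ is the osculating hyperplane at every point.
\item Apply the Plücker formula for the ramification of $V$ on $C$. Compare this with the ramification of the section $\rho(x,\cdot)$ of $L\otimes L$ restricted to the diagonal. If the hyperplane osculates to order $\ge 4$ everywhere, the ramification sequence of $V$ at a general point is non-generic.
\item In characteristic $0$ this can only happen if $C_0$ is degenerate, which the smoothness of $\rho$ and the earlier steps exclude.
\end{itemize}

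If this fails, and the quartic example suggests it may for rational curves, I would fall back on two alternatives. One is to show that a positive-genus Legendrian curve on $Q^3$ has degree and genus incompatible with $\rho(g,4,d)\ge 0$ and $d\ge g+3$. The other is to replace "generic $p$" by a dimension count showing that such $\ell$ form a proper closed subset of every component of $Q^4_d(C)$, which is what \autoref{unique and smooth quadric} ultimately needs.
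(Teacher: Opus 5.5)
Your Steps 1--2 match the paper's argument: if $\ord_p(\rho)\ge 3$ everywhere, then $B(x',x')\equiv 0$, so every tangent line of $C_0$ lies on $\rho$. The gap is Step 3, and your first plan there cannot work. Differentiating further does give $B(x',x'')\equiv B(x,x''')\equiv 0$. Nondegeneracy then forces $B(x,x'''')=B(x'',x'')\neq 0$, so in this case $\ord_p(\rho)=4$ and $T_{\varphi(p)}\rho$ is the osculating hyperplane at every general point. But contact of order $\ge 4$ with the osculating hyperplane is automatic. It is not a ramification condition, so the Plücker formula yields nothing. Your quartic on $x_0x_4-4x_1x_3+3x_2^2$ has exactly this property and no ramification at all.

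Your fallback (1) fails too. In fact no argument can close Step 3, because the statement is false for every $g\ge 1$. The Lagrangian Grassmannian $LG(2,4)\subset\PP^4$ is a smooth quadric threefold. Let $[v]\subset\PP^3$ be a curve that is Legendrian for $\omega=z_0\wedge z_1+z_2\wedge z_3$, i.e.\ $\omega(v,v')\equiv 0$. Then the curve $[v\wedge v']\subset LG(2,4)$ has tangent line spanned by $v\wedge v'$ and $v\wedge v''$, with $v''\in v^{\perp_\omega}$. That tangent line is therefore contained in the line $\{[v\wedge w] : w\in v^{\perp_\omega}\}$ of the quadric.

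By Bryant's theorem, every compact Riemann surface, in particular a general one, carries such curves. For example, take $v=(1,\ \phi-\tfrac12\psi\,\tfrac{d\phi}{d\psi},\ \psi,\ \tfrac12\tfrac{d\phi}{d\psi})$ with $\phi,\psi$ meromorphic; these have arbitrarily large degree. When $[v]$ is not a line, the resulting curve is automatically nondegenerate in $\PP^4$. Indeed, a hyperplane section of a smooth $Q^3$ is a quadric surface of rank $\ge 3$, and on such a surface only lines have all their tangent lines contained in it. So these curves are images of honest $g^4_d$'s with a smooth quadric in the kernel and $\ord_p(\rho)=4$ at a general point.

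The paper's proof stops at the same place. After $B(s',s')\equiv 0$, it argues that lines on a smooth quadric threefold form $OG(2,5)\cong\PP^3$, so the tangent directions ``have only three degrees of freedom'', contradicting nondegeneracy. That is not a contradiction: the tangent lines of a curve form a one-parameter family, your quartic already refutes the claim, and the argument never uses $g\ge 1$. Your suspicion of this step is justified.

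What the induction for $r=4$ actually needs is your fallback (2). For $D$ general and $\ell$ general in a component $\Sigma\subseteq Q^4_{d-1}(D)$ of dimension $3s+5$, one must show the unique quadric is not of this Legendrian type. Proving that requires a genuine upper bound on the dimension of the Legendrian-type locus in $G^4_{d-1}(D)$. Neither your proposal nor the paper supplies it.
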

\begin{proof}
Let us translate everything in terms of the image curve $C_0$, which is known to be nondegenerate in $\PP^4$. The element $\rho\in \ker\nu(L,V)$ is a smooth quadric $Q$ in $\PP^4$ that contains $C_0$. Therefore, we trivially have that any point of the curve $C_0$ lies in $Q$ and, furthermore, any tangent vector to $C_0$ is included in the tangent space to $Q$ at the corresponding point. These mean that $Q$ has contact of order at least $2$ with $C_0$ at all points $p\in C_0$ i.e. $\ord_p(\rho)\ge 2$, for all $p\in C_0$. 

Having these said, since it is an open condition, the conclusion would follow if we ensure $\ord_p(\rho)=2$ holds for at least one point $p\in C_0$. Suppose this is not the case, so $\ord_p(\rho)\ge 3$ for all points $p\in C_0$. Geometrically, this means that all osculating $2$-planes to the curve $C_0$ are included in the tangent spaces to $Q$ at the corresponding points. 
\vspace{0.2cm}

Let $B$ be the bilinear form that defines the quadric $Q$ and let $C_0$ be given by $t\mapsto [s_0(t):s_1(t):\ldots:s_4(t)]$, where $\{s_0,s_1,\dots,s_4\}$ is a basis of global sections for $V\le H^0(C,L)$. For brevity, let us simply write $s(t)$ for the point whose coordinates are $[s_0(t):s_1(t):\ldots:s_4(t)]$. 

The normal vector to $Q$ at any point $q\in Q$ is $2B(q,-)$. Therefore, the requirement that the osculating $2$-planes to $C_0$ are included in the tangent spaces to the quadric is just 
\begin{align}\label{osculating 2-plane translate}
    B(s(t),s''(t))=0, \textnormal{ for all }t\in C.
\end{align}
Now, since $C_0$ lies in $Q$, we also have $B(s(t),s(t))=0$. Differentiating in $t$, we obtain the equality $B(s(t),s'(t))=0$, which means precisely that the tangent direction at $s(t)\in C_0$ lies in $T_{s(t)}Q$, already known. Differentiating again in $t$ gives the relation $B(s'(t),s'(t))+B(s(t),s''(t))=0$. Using \eqref{osculating 2-plane translate}, this is just $B(s'(t),s'(t))=0$. But then $B(s(t)+\lambda s'(t),s(t)+\lambda s'(t))=0$ for all $\lambda\in \CC$, so the entire tangent line to $s(t)$ at $C_0$ must lie inside the quadric $Q$, for all points $s(t)$ on $C_0$. But the Fano variety of lines in a (smooth) quadric threefold is the orthogonal Grassmannian $OG(2,5)\cong \PP^3$, so the tangent directions to the curve $C_0$ have only three degrees of freedom, contradicting the nondegeneracy of $C_0$. 
\end{proof}

We continue the discussion when $r=4$ when $\binom{r+2}{2}=15<2d+1-g$. We present an inductive argument that guarantees the existence of a component of $Q^4_d(C)$ that has the expected dimension
$$\alpha(g,4,d)=3s+5.$$
Observe that, for fixed values of $d-g$ (or fixed values of $s$), the expected dimension is the same; see \autoref{r=4} for an explicit view of the evolution of $\alpha(g,4,d)$. The induction goes through each column where $d-g$ has a fixed value, and given the existence of component of the expected dimension $\alpha(g-1,4,d-1)=3s+5$ in $Q^4_{d-1}(D)$ for the general curve $[D]\in\mrond_{g-1}$, we show there exists a component of the expected dimension $\alpha(g,4,d)=3s+5$ in $Q^4_d(C)$ for the general curve $[C]\in\mrond_g$. 

As base cases, we use $(g,d)\in\{(8,11),(6,10)\}$ for the area $-s=g-d+r\in\{0,1\}$, as well as all pairs $(g,d)$ with $\alpha(g,4,d)\ge g$ for the area $-s\le -1$ where, for a general curve $[C]\in\mrond_g$, we infer from \autoref{SMRC r ge 5} the existence of a component in $Q^4_d(C)$ of the expected dimension $\alpha(g,4,d)=3s+5$, which also dominates the Jacobian $\Pic^d(C)$; in \autoref{r=5}, this is the "upper-right triangle" range that is delimited for both $r=4$ and $r=5$. 

So let $(g,d)$ be such that $\binom{r+2}{2}=15<2d+1-g$ and fix $s=d-g-4\ge -1$. Consider a curve $C_0=D_0\cup_p E_0$ in $\ol{\mrond}_{g}$, where $[D_0,p]\in\mrond_{g-1,1}$ and $[E_0,p]\in\mrond_{1,1}$. Let $M_g$ be the versal deformation space of $C_0$, so $M_g$ may be viewed as an étale neighbourhood of $[C_0]$ in $\ol{\mrond}_g$. The technical ingredient in the induction process is the following stack.

\begin{theorem}\label{stack parametrize mrc}
There is a stack $\mu:\mathfrak{Q}^4_d\to M_g$ with the following structure:

$\bullet$ if $[C]\in M_g$ is a smooth curve, then $\mu^{-1}[C]$ parametrizes tuples $(\ell,\rho)$, where $\ell$ is a linear system in $G^4_d(C)$ and $\rho\in \PP\ker\nu(\ell)$, 

$\bullet$ if $[C=D\cup_p E]\in M_g$ is a singular curve such that $[D,p]\in \mrond_{g-1,1}$ and $[E,p]\in\mrond_{1,1}$, then $\mu^{-1}[C]$ parametrizes tuples $(\ell,\rho_D,\rho_E)$ such that 

$\diamondsuit$ $\ell=\left\{(L_D,V_D),(L_E,V_E)\right\}$ is a limit $g^4_d$ on $C$, 

$\diamondsuit$ $\rho_D\in \PP\ker\left\{\Sym^2 V_D\to H^0\left(D,L_D^2\right)\right\}$, 

$\diamondsuit$ $\rho_E\in \PP\ker\left\{\Sym^2 V_E\to H^0\left(E,L_E^2\right)\right\}$, and 

$\diamondsuit$ the compatibility condition $\ord_p(\rho_D)+\ord_p(\rho_E)\ge 2d$ holds. 
\end{theorem}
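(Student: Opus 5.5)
The plan is to build $\mathfrak{Q}^4_d$ in the spirit of the Eisenbud--Harris construction of the moduli of limit linear series, and then add the quadric data fiberwise, exactly as in the Farkas/Ortega treatment of divisors on $\ol{\mrond}_g$ from the SMRC. After an étale base change of $M_g$ we may assume a section through the node. Over $M_g$ we first consider the space $\mathfrak{G}^4_d\to M_g$ of limit linear series of Eisenbud--Harris (restricted to the locus of curves that are smooth or of compact type $D\cup_p E$ with the node at the marked point). This is a scheme/stack proper over $M_g$, and its fibers are $G^4_d(C)$ for smooth $C$ and the variety of limit $g^4_d$'s on $D\cup_p E$ for singular $C$. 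Following the standard construction, we take a Poincaré-type family of line bundles on the universal curve $\mathcal{C}\to \mathfrak{G}^4_d$. For the singular fibers we twist along the components so that the restriction to $D$ has degree $d$ on $D$ and degree $0$ on $E$, and symmetrically for the other twist. This produces two vector bundles $\erond_D,\erond_E$ of rank $5$ with fibers $V_D$ and $V_E$.

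Next we realize the quadric data. On the smooth locus we use the rank-$15$ bundle $\Sym^2\erond$ and the bundle $\frond$ with fiber $H^0(C,L^2)$, together with the multiplication morphism $\nu:\Sym^2\erond\to\frond$ already used in the introduction. The locus $\{\rho\in\PP(\Sym^2\erond)\mid \nu(\rho)=0\}$ is then a closed substack of the projective bundle. Over the boundary we replace $\PP(\Sym^2\erond)$ by the fiber product $\PP(\Sym^2\erond_D)\times\PP(\Sym^2\erond_E)$. The two kernel conditions are again vanishing loci of bundle maps $\Sym^2\erond_D\to \frond_D$ and $\Sym^2\erond_E\to\frond_E$, where $\frond_D$ and $\frond_E$ have fibers $H^0(D,L_D^2)$ and $H^0(E,L_E^2)$.

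To glue, we work with the closure of the smooth-locus stack over a one-parameter degeneration $\mathcal{C}\to(T,0)$. A family of quadrics $\rho_t\in\ker\nu(\ell_t)$ can be viewed as a section of $L_t^2$ on the total space that vanishes identically. More usefully, we take it as an element $\rho_t\in\Sym^2 V_t$, which we rescale to obtain limits $\rho_D\in\Sym^2V_D$ and $\rho_E\in\Sym^2V_E$, both nonzero. Here $V_D$ and $V_E$ are the limits of $V_t$ under the two twists $L_D,L_E$. The order conditions come from the compatibility of the twisted limits: the standard argument, used for sections of $V$, shows that $\ord_p(\sigma_D)+\ord_p(\sigma_E)\ge d$. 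Applied to the quadric, viewed as a symmetric tensor whose "order" is taken in $L^2$ of degree $2d$, it gives $\ord_p(\rho_D)+\ord_p(\rho_E)\ge 2d$. We then define $\mathfrak{Q}^4_d$ over the boundary as the closed locus cut out by the two kernel conditions and this inequality. Since the orders are upper semicontinuous conditions on the flag of vanishing filtrations, this locus is closed. The resulting stack is the union of the smooth part and this boundary part, and it contains the closure of the smooth part, which is all we need for the induction.

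The main obstacle is verifying that the order inequality $\ord_p(\rho_D)+\ord_p(\rho_E)\ge 2d$ is indeed satisfied by limits. This requires choosing bases of $V_t$ adapted to the vanishing sequences at $p$ on both components, compatible with both twists, so that the twisted limits of $\Sym^2$ behave like products of adapted sections. I would first try the Eisenbud--Harris compatible-basis argument, as in the Farkas construction of the stack for Koszul/multiplication divisors. It says that $\Sym^2V_t$ has a basis $\sigma_i\sigma_j$ whose limits on $D$ and $E$ have orders summing to at least $2d$. The inequality for an arbitrary $\rho$ then follows, because the minimum order is attained on the leading terms. Any remaining subtlety about non-refined limit series I would avoid by restricting to the open substack of refined limits, which suffices for the dimension argument.
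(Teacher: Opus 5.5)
\textbf{Verdict: the approach is right, but the one step you argue yourself fails as written.} Your construction follows the paper's route. That means a stack of limit linear series (the paper uses Osserman's), a Poincaré bundle twisted two ways to get the $D$- and $E$-aspect bundles $\erond,\frond$, the kernel loci of the two multiplication maps, and the order inequality obtained by degenerating one-parameter families. The paper does not prove that inequality; it quotes Lemma 4.1 of \cite{Farkas05}. You sketch a proof, and the sketch has a hole.

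\textbf{The gap.} Knowing that each basis tensor $\sigma_i\sigma_j$ has $D$- and $E$-limits whose orders at $p$ sum to at least $2d$ does not give the inequality for $\rho=\sum f_{ij}\sigma_i\sigma_j$. The terms surviving in $\rho_D$ are those with $v(f_{ij})$ minimal. The terms surviving in $\rho_E$ are those with $v(f_{ij})+\alpha_i+\alpha_j$ minimal, where $t^{-\alpha_i}\sigma_i$ is the $E$-aspect basis. These are in general different terms, so ``the minimum order is attained on the leading terms'' compares orders of different tensors. For instance, take abstract data $a_0=0$, $b_0=d$, $a_1=d$, $b_1=0$, $\alpha_0=d$, $\alpha_1=0$ and $\rho=\sigma_0^2+t\sigma_1^2$. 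Then $\ord_p(\rho_D)+\ord_p(\rho_E)=0$, although every $\sigma_i\sigma_j$ satisfies the inequality.

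\textbf{The fix.} The missing input is the geometric meaning of the exponents. Work on a total space smooth at $p$, so after resolving the singularity created by base change. Then $\alpha_i$ is the multiplicity of $E$ in the divisor of $\sigma_i$, viewed as a section of the $D$-twist. Hence $\alpha_i\le a_i$ and $\alpha_i\ge d-b_i$, where $a_i,b_i$ are the orders at $p$ of the two restrictions.

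Since the $a_i$ (resp.\ $b_i$) are distinct, $\ord_p(\rho_D)=a_i+a_j$ for some $D$-leading $(i,j)$, and $\ord_p(\rho_E)=b_k+b_l$ for some $E$-leading $(k,l)$. The two minimality conditions give $\alpha_k+\alpha_l\le\alpha_i+\alpha_j$, hence
\[
a_i+a_j\ \ge\ \alpha_i+\alpha_j\ \ge\ \alpha_k+\alpha_l\ \ge\ 2d-b_k-b_l .
\]
You also need a single basis adapted at once to both lattices $\mathcal{V}_D\subseteq\mathcal{V}_E$ and to both vanishing flags. Such a basis exists, because two lattices with complete flags lie in a common apartment, but this has to be invoked.

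\textbf{A separate error in your closing remark.} Containing the closure of the smooth part is not ``all we need for the induction.'' In the inductive step for \autoref{SMRC r=4}, one builds a $(3s+5)$-dimensional family in a boundary fibre and must know that it lies in the closure of the smooth locus. That requires every component of $\mathfrak{Q}^4_d$ through the family to have dimension at least $\dim M_g+\alpha(g,4,d)$. The paper relies on the determinantal structure it asserts right after the theorem. A boundary locus cut out by adding the order inequality as an extra closed condition does not come with such a bound for free.
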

\begin{proof}
From \cite{Osserman06}, there is a stack $\sigma:\grond^4_d\to M_g$ that parametrizes (limit) linear series over each curve $[C]\in M_g$. Let $\crond_g\to M_g$ be the universal curve and let 
$$\pi:\widehat{\crond}_g:=\crond_g\times_{M_g}\grond^r_d\to \grond^4_d$$
be the induced universal curve. We fix a Poincaré line bundle $\lrond$ over $\widehat{\crond}_g$ such that for a curve $C=D\cup_p E$ with $[D,p]\in \mrond_{g-1,1}$ and $[E,p]\in\mrond_{1,1}$ and a limit series $\ell=\left\{(L_D,V_D),(L_E,V_E)\right\}$ on $C$, we have $\lrond\vert_D=L_D$ and $\lrond\vert_E=L_E(-dp)$. As in \cite{FJP25} Section 3, we use $\lrond$ and construct vector bundles $\erond$ and $\frond$ over $\grond^4_d$ such that

$\bullet$ for $t=[C,\ell=(L,V)]\in \grond^4_d$ with $C$ smooth, $\erond(t)=V$ and $\frond(t)=H^0\left(C,L^2\right)$, and 

$\bullet$ for $t=[C=D\cup_p E,\ell]\in \grond^4_d$ with $[D,p]\in\mrond_{g-1,1}$, $[E,p]\in \mrond_{1,1}$ and a limit series $\ell=\left\{(L_D,V_D),(L_E,V_E)\right\}$, we have $\erond(t)=V_D$ and $\frond(t)=H^0\left(C,L_D^2\right)$.
\\
We also have a vector bundle morphism $\nu_D:\Sym^2\erond\to \frond$ that globalizes the multiplication map of sections. Let us denote with $\PP\ker(\nu_D)\subseteq \PP(\Sym^2 \erond)$ the indeterminacy locus of the induced rational map $\PP(\nu_D):\PP(\Sym^2 \erond)\dashrightarrow \PP(\frond)$. If $\Delta\subset M_g$ is the locus parametrizing the curves for which the node $p$ is not a smooth point, we consider $\erond_p$ to be the closure in $\widehat{\crond}_g$ of the components of $\sigma^{-1}(\Delta)$ containing $E\setminus\{p\}$. Then $\lrond(d\erond_p)$ is also a Poincaré line bundle on $\widehat{\crond}_g$ for which, by contrast to $\lrond$, over a nodal curve $t=[C=D\cup_p E,\ell]$ as above, the induced vector bundles $\erond$ and $\frond$ have local structure $\erond(t)=V_E$ and $\frond(t)=H^0\left(C,L_E^2\right)$. As above, we consider the indeterminacy locus $\PP\ker(\nu_E)\subseteq \PP(\Sym^2\erond)$ of the induced rational map $\PP(\nu_E):\PP(\Sym^2 \erond)\dashrightarrow\PP(\frond)$. 

Then we define $\mu:\mathfrak{Q}^4_d:=\PP\ker(\nu_D)\times_{M_g}\PP\ker(\nu_E)\to M_g$ to be the stack that we desire. Indeed, for a point $[C=D\cup_pE]$ as above, the compatibility condition comes from passing to limit in a family whose general fiber is a smooth curve and the special fiber is a nodal curve as above, and it is an immediate consequence of e.g. Lemma 4.1\cite{Farkas05}. 
\end{proof}

The stack $\mathfrak{Q}^4_d$ has a determinantal structure over $M_g$ and all irreducible components of each fiber $\mu^{-1}[C]$ have dimension at least $\alpha(g,4,d)=3s+5$. 

Let $C=D\cup_p E$ be as above, such that $[D,p]\in\mrond_{g-1,1}$ is general. We show that $\mu^{-1}[C]$ contains at least one component of the expected dimension $3s+5$. Then, by semicontinuity, the same is true for the general curve $[C']\in M_g$. Forgetting the attached quadric $\rho\in \PP\ker\nu(\ell)$ which is also quantified by $\mu^{-1}[C']$, \autoref{unique and smooth quadric} tells us that $Q^4_d(C')$ itself contains a component of the expected dimension $3s+5$ for the general curve $[C']\in M_g$, as required. 

Now, from the induction hypothesis, there exists a component $\Sigma$ of the expected dimension $3s+5$ in $Q^4_{d-1}(D)$. Thanks to \autoref{unique and smooth quadric} and \autoref{generic order for quadric at point}, $p\in D$ can be chosen general enough such that the following hold for the general $\ell=(L,V)\in \Sigma$: 

$\bullet$ $V$ has ramification sequence $(0,1,2,3,4)$ at $p$, 

$\bullet$ there is a unique quadic $\rho\in \PP\ker\left\{\Sym^2 V\to H^0\left(D,L^2\right)\right\}$, and 

$\bullet$ $\ord_p(\rho)=2$. 
\vspace{0.2cm}

Pick a general $\ell=(L,V)\in \Sigma$ and set $L_D=L(p)$, $V_D=p+V$. Forgetting the basepoint $p$, any $\rho_D\in \PP\ker\left\{\Sym^2 V_D\to H^0\left(D,L_D^2\right)\right\}$ identifies with the element $\rho$ described above. Therefore, $\rho_D$ is unique and $\ord_p(\rho_D)=2+2=4$.

Take a linear series $(L_E,V_E)\in G^4_d(E)$ that is compatible with $(L_D,V_D)$, as in the requirements of \autoref{stack parametrize mrc}. 
If $L_E=\OO_E((d-1)p+q)$ for some point $q\ne p$, then $V_E$ needs to have ramification sequence at $p$ at least $(d-5,d-4,\dots,d-1)$. Therefore, we can write $V_E=(d-5)p+W$, where $W$ is a $5$-dimensional subspace of $H^0(E,\OO_E(4p+q))$. But $H^0(E,\OO_E(4p+q))$ is itself $5$-dimensional, so we need $V_E=(d-5)p+H^0(E,\OO_E(4p+q))$. Now, the compatibility condition requires $\ord_p(\rho_E)\ge 2d-4$, which means that both sections in each tensor of $\rho_E$ have order at least $d-3$ at $p$. Consequently, removal of the basepoint $p$ identifies $\rho_E$ with an element of the kernel of 
$$\Sym^2H^0(E,\OO_E(2p+q))\to H^0(E,\OO_E(4p+2q)).$$
But this multiplication map is an isomorphism, so no such $\rho_E$ could exist! Thus, one needs $L_E=\OO_E(E,\OO_E(dp))$. Then $V_E$ is again a $5$-dimensional subspace of $H^0(E,\OO_E(dp))$ and has ramification sequence at least $(d-5,d-4,\dots,d-1)$. We conclude, as in the previous paragraph, that $V_E=(d-5)p+H^0(E,\OO_E(5p))$. Again, the compatibility condition requires $\ord_p(\rho_E)\ge 2d-4$, so the sections in each tensor of $\rho_E$ each have order at least $d-4$ at $p$. Removal of the basepoint $p$ now identifies $\rho_E$ with an element of the kernel of
$$\Sym^2H^0(E,\OO_E(4p))\to H^0(E,\OO_E(8p)),$$
which also satisfies $\ord_p(\rho_E)\ge 4$. 

\begin{claim}
There is a unique such $\rho_E$.
\end{claim}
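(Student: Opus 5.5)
The plan is to reduce the claim to a dimension count in a filtered subspace of $\Sym^2 H^0(E,\OO_E(4p))$, using a basis adapted to the point $p$. The sought $\rho_E$ is a nonzero element, up to scaling, of
$$K:=\ker\left\{\Sym^2H^0(E,\OO_E(4p))\to H^0(E,\OO_E(8p))\right\}$$
satisfying $\ord_p(\rho_E)\ge 4$. Here $\ord_p$ is understood, as in the compatibility condition of \autoref{stack parametrize mrc}, as the order of vanishing at $p$ measured with respect to an adapted basis: $\ord_p(\rho)\ge m$ means that $\rho$ lies in the span of the products $s_is_j$ with $\ord_p(s_i)+\ord_p(s_j)\ge m$.

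First I determine the vanishing sequence of $H^0(E,\OO_E(4p))$ at $p$. For $j\le 2$ we have $h^0(\OO_E((4-j)p))=4-j$, while $h^0(\OO_E(p))=1$ is spanned by the section vanishing exactly at $p$. Hence there is no section of order exactly $3$, and the vanishing sequence is $(0,1,2,4)$. I choose an adapted basis $s_0,s_1,s_2,s_4$ with $\ord_p(s_i)=i$. The subspace of quadrics with $\ord_p\ge 4$ is then
$$U=\langle s_0s_4,\ s_1s_4,\ s_2s_4,\ s_2s_2,\ s_4s_4\rangle,$$
which has dimension $5$. The required elements $\rho_E$ are exactly the nonzero elements of $U\cap K$, that is, of the kernel of the restricted multiplication map $m\colon U\to H^0(E,\OO_E(8p))$.

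Next I bound this kernel from both sides.
\begin{itemize}
\item Every product in $U$ vanishes to order at least $4$ at $p$. So $m$ lands in $H^0(E,\OO_E(8p)(-4p))=H^0(E,\OO_E(4p))$, which is $4$-dimensional. Therefore $\dim (U\cap K)\ge 1$, and such a $\rho_E$ exists.
\item The images of $s_0s_4$, $s_1s_4$ and $s_4s_4$ have orders $4$, $5$ and $8$ at $p$. The images of $s_2s_4$ and $s_2^2$ both have order $6$.
\item Picking one order-$6$ image together with the other three gives sections of distinct orders $4,5,6,8$. These are linearly independent, so $m$ has rank $4$.
\end{itemize}
Consequently $\dim(U\cap K)=1$, and $\rho_E$ is unique in $\PP K$. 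Concretely, it is the combination of $s_2s_4$ and $s_2^2$ whose order-$6$ terms cancel, corrected by the unique multiples of $s_0s_4$, $s_1s_4$ and $s_4s_4$ needed to kill the remaining terms.

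The main point to be careful about is the precise meaning of $\ord_p(\rho_E)$ for a symmetric tensor. If $\ord_p$ is instead read as $\min\{\ord_p(s)+\ord_p(t)\}$ over all presentations of $\rho_E$ as a sum of products, the same conclusion holds. Indeed, with respect to the adapted basis, the quadrics of order at least $4$ are exactly those in $U$: any presentation can be rewritten in the adapted basis without lowering orders. I would record this observation explicitly. The remaining steps are routine: the vanishing sequence computation is Riemann--Roch on $E$, and the rank count is a leading-order argument.
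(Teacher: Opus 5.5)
Your argument is the paper's: the quadrics of order $\ge 4$ span the $5$-dimensional space $\langle s_0s_4,s_1s_4,s_2s_4,s_2^2,s_4^2\rangle$, which maps with rank $4$ onto the $4$-dimensional space $4p+H^0(E,\OO_E(4p))$, leaving a one-dimensional kernel; your leading-order proof of the rank is more explicit than the paper's appeal to surjectivity of the full multiplication map. One slip to fix: $\ord_p(s_2^2)=2+2=4$, not $6$, so the order-$6$ element in your rank argument must be $s_2s_4$ (with $s_2^2$ the orders would be $4,4,5,8$ and the distinct-orders argument says nothing), and $\rho_E$ is not a cancellation of order-$6$ terms but $s_2^2$ minus a nonzero multiple of $s_0s_4$ plus higher-order corrections --- e.g.\ in a Weierstrass model with $s_0=x^2$, $s_1=y$, $s_2=x$, $s_4=1$ it is exactly $s_2^2-s_0s_4$.
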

\begin{proof}
The multiplication map $\Sym^2H^0(E,\OO_E(4p))\to H^0(E,\OO_E(8p))$ is surjective as of \eqref{GL degree bound} in Section 2. If we let $W$ be the vector subspace of $\Sym^2 H^0(E,\OO_E(4p))$ that is spanned by all tensors $\sigma\otimes \tau$ for which $\ord_p(\sigma)+\ord_p(\tau)\ge 4$, then surjectivity ensures that the image of $W$ is precisely the subspace $4p+H^0(E,\OO_E(4p))\le H^0(E,\OO_E(8p))$, and this is $4$-dimensional.

At the same time, since the vanishing sequence of $H^0(E,\OO_E(4p))$ at $p$ is $(0,1,2,4)$, there are only $5$ ways to pair vanishing orders in order to get a sum of at least $4$, and they are $4+0,4+1,4+2,4+4$ and $2+2$. Since any element $\rho_E$ in the kernel of the original multiplication map of order $\ge 4$ must be in the kernel of the restriction to $W$, we conclude that $\rho_E$ varies in a $1$-dimensional vector space. Therefore, up to projectivization, $\rho_E$ is unique. 
\end{proof}

All in all, as we sweep with $(L_D,V_D)=(L(p),p+V)$ through a nonempty open of the component $\Sigma$ of $Q^4_{d-1}(D)$, the elements $\rho_D$, $(L_E,V_E)$ and $\rho_E$ are uniquely determined. It follows that $\mu^{-1}[C]$ has a component of dimension $3s+5$. This finishes the proof of \autoref{SMRC r=4}. 

\section{Annex with Tables for SMRC in $\PP^3$, $\PP^4$ and $\PP^5$}

The following $3$ tables show the evolution of the expected dimension $\alpha(g,r,d)$, when $r\in\{3,4,5\}$ and the values of the difference $d-g$ are taken one at a time. The colors stand for the following:

$\bullet$ by a \textbf{\textcolor{Blue}{blue}} cell, we mean that the SMRC holds,

$\bullet$ by a \textbf{\textcolor{Green}{green}} cell, we mean that there exists a component of the expected dimension $\alpha(g,r,d)$,

$\bullet$ by a \textbf{\textcolor{LightGreen}{light green}} cell, we mean that the existence part of $Q^r_d(C)$ is established (see in the table for $r=5$), 

$\bullet$ by a \textbf{\textcolor{Red}{red}} cell, we mean that SMRC predicts a wrong dimensionality statement; in this case we also write, in parenthesis, what is the actual dimension of $Q^r_d(C)$.

\begin{figure}[h]
\centering
\includegraphics[width=6.2in]{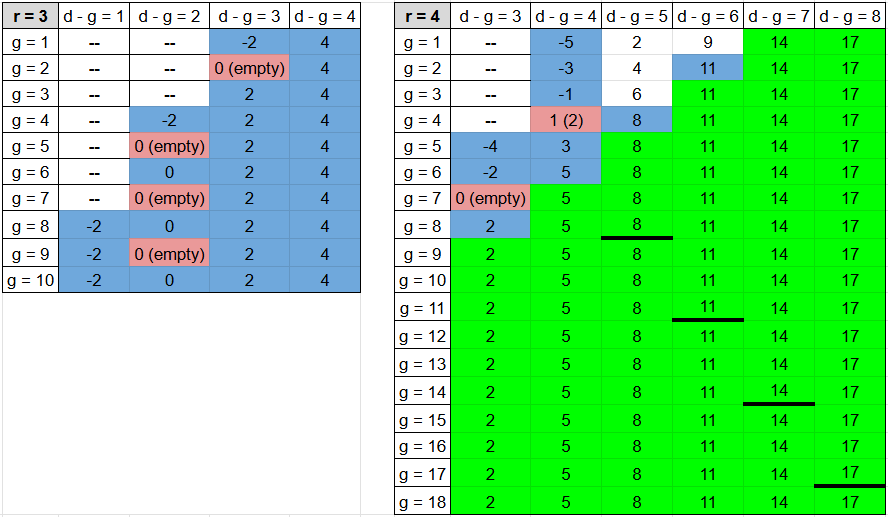}
\label{r=3}
\label{r=4}
\end{figure}

\begin{figure}[h]
\centering
\includegraphics[width=4.3in]{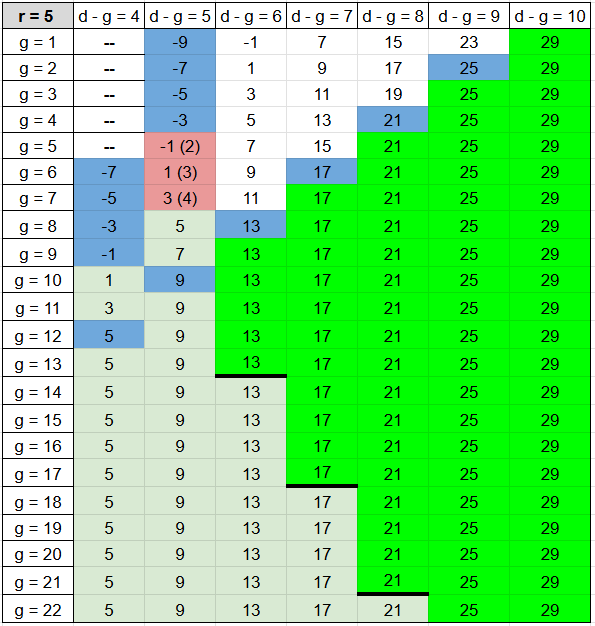}
\label{r=5}
\end{figure}


\end{document}